\newtheorem{theorem}{Theorem}[section]
\newtheorem{lemma}{Lemma}[section]
\newtheorem{definition}{Definition}[section]
\newtheorem{conjecture}{Conjecture}[section]
\newtheorem{case}{Case}[section]
\newtheorem{proposition}{Proposition}[section]
\newtheorem{claim}{Claim}[section]
\DeclareMathOperator{\ex}{ex}
\begin{document}
\textwidth 150mm \textheight 225mm
\title{Connected Tur\'{a}n numbers for Berge paths in hypergraphs\thanks{Email addresses: lpzhangmath@163.com (L.-P. Zhang), h.j.broersma@utwente.nl (h.j.broersma), gyori.ervin@renyi.hu (E.~Gy\H{o}ri), ctompkins496@gmail.com (C.~Tompkins), lgwangmath@163.com (L. Wang)}}
\author{{Lin-Peng Zhang\textsuperscript{a,b}, Hajo Broersma\textsuperscript{b}, Ervin Gy\H{o}ri\textsuperscript{c}, Casey Tompkins\textsuperscript{c}, Ligong Wang\textsuperscript{a,d}}\\
{\small \textsuperscript{a} School of Mathematics and Statistics,}\\
{\small Northwestern Polytechnical University, Xi'an, Shaanxi 710129, P.R. China.}\\
{\small \textsuperscript{b} Faculty of Electrical Engineering, Mathematics and Computer Science,}\\
{\small  University of Twente, P.O. Box 217, 7500 AE Enschede, the Netherlands.}\\
{\small \textsuperscript{c} Alfr\'{e}d R\'{e}nyi Institute of Mathematics,}\\
{\small Hungarian Academy of Sciences, 1053 Budapest, Hungary.}\\
{\small \textsuperscript{d} Xi’an-Budapest Joint Research Center for Combinatorics,}\\
{\small Northwestern Polytechnical University, Xi’an, 710072, China.}}
\date{}
\maketitle
\begin{center}
\begin{minipage}{135mm}
\vskip 0.3cm
\begin{center}
{\small {\bf Abstract}}
\end{center}
{\small Let $\mathcal{F}$ be a family of $r$-uniform hypergraphs. Denote by $\ex^{\mathrm{conn}}_r(n,\mathcal{F})$ the maximum number of hyperedges in an $n$-vertex connected $r$-uniform hypergraph which contains no member of $\mathcal{F}$ as a subhypergraph. 
Denote by $\mathcal{B}C_k$ the Berge cycle of length $k$, and 
by $\mathcal{B}P_k$ the Berge path  of length $k$. F\"{u}redi, Kostochka and Luo, and independently Gy\H{o}ri, Salia and Zamora determined  $\ex^{\mathrm{conn}}_r(n,\mathcal{B}P_k)$ provided $k$ is large enough compared to $r$ and $n$ is sufficiently large. For the case $k\le r$, Kostochka and Luo obtained an upper bound for $\ex^{\mathrm{conn}}_r(n,\mathcal{B}P_k)$.
In this paper, we continue investigating the case $k\le r$.
We precisely determine $\ex^{\mathrm{conn}}_r(n,\mathcal{B}P_k)$ when $n$ is sufficiently large and $n$ is not a multiple of~$r$. For the case $k=r+1$, we determine $\ex^{\mathrm{conn}}_r(n,\mathcal{B}P_k)$ asymptotically. 
 \vskip 0.1in \noindent {\bf Key Words}: \ Berge cycles; Berge paths; Connected Tur\'{a}n numbers \vskip
0.1in \noindent {\bf AMS Subject Classification (2020)}: \ 05C65, 05C35}
\end{minipage}
\end{center}
\date{}
\maketitle

\section{Introduction}
Let $\mathcal{F}$ be a family of $r$-uniform hypergraphs.
An $r$-uniform hypergraph $\mathcal{H}$ is called $\mathcal{F}$-free if it does not contain any member of $\mathcal{F}$ as its subhypergraph.
The Tur\'{a}n number of $\mathcal{F}$, denoted by $\ex_r(n,\mathcal{F})$, is the maximum number of hyperedges in an $n$-vertex $\mathcal{F}$-free $r$-uniform hypergraph.
When $\mathcal{F}$ contains only one hypergraph $F$, we write $\ex_r(n,F)$, and for $r=2$, we simply write $\ex(n,F)$. 
For the graph case, when $G$ is not bipartite the asymptotic behavior of $\ex(n,F)$ follows from the famous Erd\H{o}s-Stone-Simonovits Theorem~\cite{ES-1,ES-2}.
However, for the case when $F$ is bipartite, much less about the $\ex(n,F)$ is known (for reference, see the survey \cite{FS1}).

In this paper, we are interested in paths and cycles.
Denote by $P_k$ the path of length $k$, by $C_k$ the cycle of length $k$ and by $\mathcal{C}_{\ge k}$ the family of cycles of length at least $k$. 
In 1959, Erd\H{o}s and Gallai~\cite{EG} proved the following results on $\ex(n,P_k)$ and $\ex(n,\mathcal{C}_{\ge k})$.

\begin{theorem}[Erd\H{o}s and Gallai, \cite{EG}]\label{EG}
Fix integers $n$ and $k$ such that $n\ge k\ge 1$. 
Then
$\ex(n,P_k)\le \frac{(k-1)n}{2}$,
with equality holding if and only if $G$ is the disjoint union of complete graphs on $k$ vertices.
\end{theorem}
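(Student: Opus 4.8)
The plan is to prove the displayed inequality for every $P_k$-free graph on $n$ vertices (for all $n\ge 1$) by induction on $n$, and to extract the equality characterization from the same induction. I would first reduce to connected graphs: if $G$ has components $G_1,\dots,G_t$ with $n_i:=|V(G_i)|$, then each $G_i$ is $P_k$-free, the target bound is additive since $\sum_i \frac{(k-1)n_i}{2}=\frac{(k-1)n}{2}$, and a disjoint union of copies of $K_k$ is precisely a graph all of whose components are single copies of $K_k$; so it suffices to handle connected $G$ and then reassemble. For connected $G$ with $n\le k$ the bound is immediate, since $e(G)\le\binom{n}{2}\le\frac{(k-1)n}{2}$, with equality forcing $n=k$ and $G=K_k$.

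The heart of the argument is the connected case with $n\ge k+1$. Here I would take a longest path $P=x_0x_1\cdots x_\ell$; since $G$ is $P_k$-free we have $\ell\le k-1<n-1$, so $P$ does not span $G$ and there is a vertex off $P$. The key rotation step is that there is no index $i$ with $x_0x_i\in E$ and $x_\ell x_{i-1}\in E$, because such an $i$ would yield the cycle $x_0x_1\cdots x_{i-1}x_\ell x_{\ell-1}\cdots x_i x_0$ through all vertices of $P$, and then connectivity together with an off-path vertex would produce a path longer than $P$, contradicting maximality. Counting admissible indices, this gives $\deg(x_0)+\deg(x_\ell)\le\ell\le k-1$, so one endpoint $v$ satisfies $\deg(v)\le\frac{k-1}{2}$. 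Deleting $v$ and applying the induction hypothesis to the $P_k$-free graph $G-v$ yields $e(G)=e(G-v)+\deg(v)\le\frac{(k-1)(n-1)}{2}+\frac{k-1}{2}=\frac{(k-1)n}{2}$.

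For the equality characterization I would track where the above is tight. Equality across components forces every component to be extremal, so it is enough to show that the only extremal connected graphs are single copies of $K_k$. In the connected case with $n\ge k+1$, equality requires $\deg(v)=\frac{k-1}{2}$ and $G-v$ extremal; by induction $G-v$ is then a disjoint union of copies of $K_k$, and connectivity of $G$ forces $v$ to have a neighbor inside some such $K_k$. But a Hamiltonian path of that $K_k$ begun at a neighbor of $v$ extends through $v$ to a copy of $P_k$, contradicting $P_k$-freeness. Hence no connected extremal graph has more than $k$ vertices, and by the $n\le k$ analysis the extremal connected graphs are exactly $K_k$, which gives the disjoint-union-of-$K_k$ description.

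I expect the rotation step to be the main obstacle to state cleanly: one must argue carefully that a closing index produces a spanning cycle on the vertices of $P$, and that connectivity then genuinely lengthens the path, by tracing from an off-path vertex to its first point of contact with that cycle. Once this is in place, the edge count is routine bookkeeping, and the equality analysis is a short additional argument riding on the same longest-path structure.
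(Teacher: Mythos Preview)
The paper does not prove this theorem; it is quoted as a classical result of Erd\H{o}s and Gallai with a citation to \cite{EG} and no argument is supplied. So there is no in-paper proof to compare against.

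Your proposal is correct and is essentially the classical proof. The reduction to components is fine, the longest-path rotation argument is set up properly (all neighbors of the endpoints lie on $P$, a closing pair $x_0x_i,\ x_\ell x_{i-1}$ yields a cycle on $V(P)$, and connectivity with an off-path vertex produces a longer path), and the resulting bound $\deg(x_0)+\deg(x_\ell)\le \ell\le k-1$ together with deletion of a low-degree endpoint gives the inductive step. Two small points worth making explicit when you write it out: (i) the induction should be stated for all $P_k$-free graphs on $n$ vertices (not just connected ones), since $G-v$ need not be connected; and (ii) in the equality analysis for connected $G$ with $n\ge k+1$, note that when $k$ is even the endpoint degree is at most $\lfloor (k-1)/2\rfloor<(k-1)/2$, so the inequality is already strict, while for odd $k$ your $K_k$-plus-pendant contradiction finishes it. These are cosmetic clarifications; the argument itself is sound.
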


\begin{theorem}[Erd\H{o}s and Gallai, \cite{EG}]
Fix integers $n$ and $k$ such that $n\ge k\ge 3$.
Then,
$\ex(n,\mathcal{C}_{\ge k})\le \frac{(k-1)(n-1)}{2}$.
\end{theorem}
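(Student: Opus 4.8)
The plan is to prove the bound by induction on $n$, in the stronger form that it holds for \emph{every} $n$-vertex graph $G$ with no cycle of length at least $k$ (not merely for $n\ge k$), since the reductions below can produce graphs on fewer than $k$ vertices. The guiding example is the tree-of-cliques: any connected graph obtained by gluing copies of $K_{k-1}$ along cut vertices has circumference $k-1$ and exactly $\frac{(k-1)(n-1)}{2}$ edges, which both confirms the target bound and suggests that equality should force a block structure in which every block is a $K_{k-1}$.

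First I would dispose of the non-$2$-connected cases. If $G$ is disconnected with components on $n_1,\dots,n_m$ vertices, the inductive hypothesis gives $e(G)\le\frac{k-1}{2}\sum_i(n_i-1)=\frac{(k-1)(n-m)}{2}\le\frac{(k-1)(n-1)}{2}$. If $G$ is connected but has a cut vertex, I write $G=G_1\cup G_2$ with $V(G_1)\cap V(G_2)=\{v\}$, $|V(G_i)|=n_i$ and $n_1+n_2=n+1$; each $G_i$ still has circumference at most $k-1$, so induction yields $e(G)=e(G_1)+e(G_2)\le\frac{(k-1)(n_1-1)}{2}+\frac{(k-1)(n_2-1)}{2}=\frac{(k-1)(n-1)}{2}$. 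Hence it suffices to treat $2$-connected graphs, and the small cases $n\le k-1$ are immediate from $\binom{n}{2}\le\frac{(k-1)(n-1)}{2}$.

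For a $2$-connected $G$ I would run a minimum-degree dichotomy. If some vertex $v$ has $d(v)\le\frac{k-1}{2}$, then deleting it and applying the inductive hypothesis to $G-v$ (which need not be $2$-connected, but is covered since the hypothesis is stated for all graphs) gives $e(G)\le e(G-v)+\frac{k-1}{2}\le\frac{(k-1)(n-2)}{2}+\frac{k-1}{2}=\frac{(k-1)(n-1)}{2}$. Otherwise every vertex has degree exceeding $\frac{k-1}{2}$, which forces $2\delta(G)\ge k$; here I invoke the classical fact that a $2$-connected graph contains a cycle of length at least $\min\{n,2\delta(G)\}$. Since $n\ge k$ in this remaining case, that cycle has length at least $\min\{n,k\}=k$, contradicting the assumption that $G$ has no cycle of length at least $k$. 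Thus the second alternative never occurs for $n\ge k$, and the induction closes.

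The main obstacle is the $2$-connected cycle lemma, that $2\delta\ge k$ forces circumference at least $\min\{n,2\delta\}$, which is the only place where the circumference hypothesis is genuinely used. Proving it from scratch requires the standard longest-cycle argument: take a longest cycle $C$, assume $|C|<n$ so that some vertex lies outside $C$, and use $2$-connectivity (a fan/Menger argument giving two disjoint attachments of an external path to $C$) together with the degree lower bound to either lengthen $C$ or count at least $2\delta$ distinct vertices along it. I would isolate this as a separate lemma and cite it, so that the remainder reduces to the bookkeeping above.
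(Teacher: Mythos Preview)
The paper does not supply its own proof of this theorem: it is quoted as a classical background result with a citation to Erd\H{o}s and Gallai~\cite{EG}, so there is no in-paper argument to compare against. Your proposal is a correct and standard proof. The reduction to $2$-connected blocks via components and cut vertices is routine and accurate; the trivial bound $\binom{n}{2}\le\frac{(k-1)(n-1)}{2}$ for $n\le k-1$ covers the small cases; and the minimum-degree dichotomy is exactly the right engine in the $2$-connected regime. The only substantive ingredient you defer is the lemma that a $2$-connected graph contains a cycle of length at least $\min\{n,2\delta\}$, which is the classical Dirac/Erd\H{o}s--Gallai circumference bound and is entirely appropriate to cite. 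One small remark: strengthening the induction hypothesis to all $n$ (not just $n\ge k$) is essential, and you handled it correctly; just make sure when you write it up that the base case $n=1$ (or $n\le 2$) is stated explicitly so the induction is formally complete.
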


Later, Faudree and Schelp \cite{FS} determined $\ex(n, P_k)$ for all $n$ and $k$, satisfying $n\ge k\ge 1$, with the corresponding extremal graphs. 
Note that the extremal graph in Theorem~\ref{EG} is not connected. 
By considering connected graphs, in 1977, Kopylov \cite{Kopylov} determined $\ex^{\mathrm{conn}}(n, P_k)$, where $\ex^{\mathrm{conn}}(n, P_k)$ denotes the classical Tur\'{a}n number
for connected graphs.

In 2008, Balister, Gy\H{o}ri, Lehel and Schelp~\cite{BGLS} improved Kopylov’s result by characterizing the extremal graphs for all $n$. 
The stability version of $\ex^{\mathrm{conn}}(n, P_k)$ was proved by F\"{u}redi, Kostochka and Verstra\"{e}te~\cite{FKV}. 
To state their result, we need some further notation. Denote by $K_k$ and $E_k$ the complete graph and empty graph on $k$ vertices, respectively. 
Let $G$ and $H$ be two disjoint graphs.
Denote by $G\cup H$ the union of $G$ and $H$, which is the graph with vertex set $V(G)\cup V(H)$ and edge set $E(G)\cup E(H)$. 
Denote by $G+H$ the join of $G$ and $H$, which is the graph with vertex set $V(G)\cup V(H)$ and edge set $E(G)\cup E(H)\cup \{uv:u\in V(G),v\in V(H)\}$.
\begin{theorem}[Kopylov, \cite{Kopylov}, Balister et al., \cite{BGLS}]
Fix integers $n\ge k\ge 4$. Then, 
\[
\ex^{\mathrm{conn}}(n,P_k)=\max\bigg\{\binom{k-1}{2}+(n-k+1), \binom{\left\lceil\frac{k+1}{2}\right\rceil}{2}+\bigg(\left\lfloor\frac{k-1}{2}\right\rfloor\bigg)\bigg(n-\left\lceil\frac{k+1}{2}\right\rceil\bigg)\bigg\}.
\]
The extremal graphs are $(K_{k-2}\cup E_{n-k+1})+K_1$, $(K_1\cup E_{n-\frac{k+1}{2}})+K_{\frac{k-1}{2}}$ for odd $k$ and $(K_2\cup E_{n-\frac{k}{2}-1})+K_{\frac{k}{2}-1}$ for even $k$.
\end{theorem}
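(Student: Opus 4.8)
The plan is to establish the matching upper bound, since the two displayed graphs are routinely checked to be connected, $P_k$-free, and to have the claimed edge counts. Write $h(n,k,a):=\binom{k-a}{2}+a(n-k+a)$ for the number of edges of $H_{n,k,a}:=K_a+(K_{k-2a}\cup E_{n-k+a})$; a short computation (expanding $\binom{a}{2}+\binom{k-2a}{2}+a(n-a)$) confirms this. The first displayed graph is $H_{n,k,1}$ and the second is $H_{n,k,\lfloor(k-1)/2\rfloor}$, so the stated maximum is $\max\{h(n,k,1),h(n,k,\lfloor(k-1)/2\rfloor)\}$. For $P_k$-freeness of $H_{n,k,a}$ one deletes the $a$ dominating vertices: the graph $K_{k-2a}\cup E_{n-k+a}$ has exactly one nontrivial component, so any path uses at most $a$ vertices from $K_a$, at most $k-2a$ vertices from $K_{k-2a}$, and at most $a$ of the isolated vertices, hence at most $k$ vertices in all, i.e. length at most $k-1$. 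Since $h(n,k,a)$ is convex in $a$, its maximum over $1\le a\le\lfloor(k-1)/2\rfloor$ is attained at an endpoint, matching the formula.

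For the upper bound I would pass from the connected path problem to a $2$-connected cycle problem by adding one dominating vertex. Given a connected $P_k$-free graph $G$ on $n\ge k+1$ vertices, let $G'=G+K_1$ be obtained by joining a new vertex $u$ to all of $V(G)$. Then $G'$ is $2$-connected: deleting $u$ leaves the connected graph $G$, and deleting any other vertex leaves a graph dominated by $u$. Moreover $G'$ has no cycle of length at least $k+2$. Indeed, a cycle $C$ of length $\ell\ge k+2$ avoiding $u$ lies in $G$ and contains a subpath with $\ell-1\ge k+1$ edges, while a cycle of length $\ell\ge k+2$ through $u$ yields, after deleting $u$, a path in $G$ with $\ell-2\ge k$ edges; in either case $G$ would contain $P_k$, a contradiction. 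Thus $G'$ is a $2$-connected graph on $N=n+1$ vertices with no cycle of length $\ge c$, where $c=k+2$.

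Now I would invoke Kopylov's theorem for $2$-connected graphs: a $2$-connected $N$-vertex graph with no cycle of length $\ge c$ has at most $\max\{h(N,c,2),\,h(N,c,\lfloor(c-1)/2\rfloor)\}$ edges, with equality only for $H_{N,c,2}$ and $H_{N,c,\lfloor(c-1)/2\rfloor}$. Since $\deg_{G'}(u)=n$, we obtain $e(G)=e(G')-n\le\max\{h(n+1,k+2,2),\,h(n+1,k+2,\lfloor(k+1)/2\rfloor)\}-n$, and a direct calculation gives $h(n+1,k+2,2)-n=h(n,k,1)$ and $h(n+1,k+2,\lfloor(k+1)/2\rfloor)-n=h(n,k,\lfloor(k-1)/2\rfloor)$, which is exactly the desired bound. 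For the extremal graphs, equality forces $G'\cong H_{n+1,k+2,a}$ for some $a\in\{2,\lfloor(k+1)/2\rfloor\}$; since for large $n$ the only vertices of degree $N-1$ in $H_{n+1,k+2,a}$ are its $a$ dominating vertices, $u$ must be one of them, and deleting $u$ recovers $G\cong H_{n,k,a-1}$, namely $(K_{k-2}\cup E_{n-k+1})+K_1$ when $a=2$ and the second family when $a=\lfloor(k+1)/2\rfloor$.

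The main obstacle is Kopylov's $2$-connected theorem itself, whose proof I would carry out by the \emph{disintegration} method: repeatedly delete vertices of degree at most the appropriate threshold $a$, keeping a ledger of the edges removed, until one reaches a nonempty subgraph of minimum degree exceeding $a$; a $2$-connected graph with large minimum degree contains a correspondingly long cycle (via the standard longest-path rotation argument), which both violates the circumference hypothesis unless the core is small and pins down the extremal structure. Two minor points also need attention: the degenerate range $n=k$, where $P_k$ (on $k+1$ vertices) cannot embed at all, so the formula is to be read for $n\ge k+1$; and checking that $c=k+2$ indeed falls within the hypotheses of Kopylov's theorem, which requires $N=n+1\ge c$, i.e. precisely $n\ge k+1$, matching the previous caveat.
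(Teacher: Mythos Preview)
The paper does not contain a proof of this theorem: it is quoted in the introduction as a known result, attributed to Kopylov and to Balister, Gy\H{o}ri, Lehel and Schelp, and is used only as background for the hypergraph problems that form the paper's actual contributions. So there is no in-paper argument to compare your proposal against.

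That said, your strategy is a standard and correct one. The reduction $G\mapsto G'=G+K_1$ converting the connected $P_k$-problem into the $2$-connected $\mathcal{C}_{\ge k+2}$-problem is exactly how this result is usually derived from Kopylov's circumference theorem, and your edge-count identities $h(n+1,k+2,2)-n=h(n,k,1)$ and $h(n+1,k+2,\lfloor(k+1)/2\rfloor)-n=h(n,k,\lfloor(k-1)/2\rfloor)$ check out. Two small remarks: first, in the extremal analysis you do not actually need ``large $n$'' to identify $u$ with a dominating vertex of $H_{n+1,k+2,a}$, since for $n\ge k+1$ and $a\ge 2$ the non-dominating vertices of $H_{n+1,k+2,a}$ have degree at most $k+1-a<n$; second, the bulk of the work you are deferring is Kopylov's $2$-connected theorem itself, and your one-sentence sketch of the disintegration argument hides most of the difficulty (in particular, the ``large minimum degree forces a long cycle'' step requires the Erd\H{o}s--Gallai/Kopylov closure-type lemma, not just a na\"{\i}ve rotation). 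If this were to be a self-contained proof rather than a reduction to the literature, that part would need to be written out in full.
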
  

It is natural to study the Tur\'{a}n numbers for paths and cycles in an $r$-uniform hypergraph. 
Note that there are several ways to define paths and cycles in $r$-uniform hypergraphs for $r\ge 3$. 
In this paper, we consider the definition due to Berge.
\begin{definition}
A Berge cycle $\mathcal{B}C_k$ of length $k$ in a hypergraph is a set of $k$ distinct vertices $\{v_1,v_2,\ldots,v_{k}\}$
and $k$ distinct edges $\{e_1,e_2,\ldots,e_k\}$ such that $\{v_i,v_{i+1}\}\subset e_i$
with indices taken modulo $k$. 
The vertices $v_1, v_2, \ldots, v_{k}$
are called the defining vertices, and the hyperedges $e_1, e_2, \ldots, e_k$ are called the defining hyperedges of the Berge cycle.
\end{definition}
\begin{definition}
A Berge path $\mathcal{B}P_k$ of length $k$ in a hypergraph is a set of $k+1$ distinct vertices $\{v_1,v_2,\ldots,v_{k+1}\}$
and $k$ distinct edges $\{e_1,e_2,\ldots,e_k\}$ such that $\{v_i,v_{i+1}\}\subset e_i$ for all $1\le i\le k-1$. The vertices $v_1, v_2, \ldots, v_{k+1}$
are called the defining vertices and the hyperedges $e_1, e_2, \ldots, e_k$ are called the defining hyperedges of the Berge~path.  
\end{definition}
In particular, when the hypergraph under consideration is $2$-uniform, containing a Berge cycle (Berge path) $\mathcal{B}C_k$ ($\mathcal{B}P_k$) is equivalent to containing a cycle $C_k$ (path $P_k$).  
Denote by $\mathcal{BC}_{\ge k}$ the family of Berge cycles of length at least $k$.  

The study of the Tur\'{a}n numbers $\ex_r(n,\mathcal{B}P_k)$ was initiated in 2016 by Gy\H{o}ri, Katona and Lemons~\cite{GKL}, who determined these numbers in the cases $k\le r$ and $k>r+1$.
It turned out that the extremal hypergraphs behave very differently when $k\le r$ and $k>r+1$. 
\begin{theorem}[Gy\H{o}ri, Katona and Lemons, \cite{GKL}]\label{thmGKL}
Fix integers $k$ and $r$ such that $k>r+1>3$. Then 
\[\ex_r(n,\mathcal{B}P_k)\le \frac{n}{k}\binom{k}{r}.\] 
Equality holds if and only if $k|n$, and the only extremal $r$-uniform hypergraph is the disjoint union of $\frac{n}{k}$
copies of the complete $k$-vertex $r$-uniform hypergraph.
Fix integers $k$ and $r$ such that $r\ge k>2$. Then 
\[\ex_r(n,\mathcal{B}P_k)\le \frac{n}{r+1}(k-1).\]
Equality holds if and only if $(r+1)|n$, and the extremal $r$-uniform hypergraph is the disjoint union of $\frac{n}{r+1}$ sets of
size $r+1$ containing $k-1$ hyperedges each.
\end{theorem}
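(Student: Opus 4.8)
The plan is to establish the matching constructions together with the upper bounds, handling the two ranges $k>r+1$ and $r\ge k$ in parallel through a single reduction to connected components. For the lower bounds I would exhibit and verify the two extremal families directly. When $k>r+1$, the disjoint union of $n/k$ copies of the complete $r$-uniform hypergraph on $k$ vertices is $\mathcal{B}P_k$-free for the trivial reason that each component has only $k$ vertices, whereas a Berge path of length $k$ needs $k+1$ distinct defining vertices; counting $\binom{k}{r}$ edges per component gives exactly $\frac{n}{k}\binom{k}{r}$ edges. When $r\ge k$, the disjoint union of $(r+1)$-vertex blocks carrying $k-1$ edges apiece is $\mathcal{B}P_k$-free for the equally trivial reason that a Berge path of length $k$ needs $k$ distinct defining edges, while each component has only $k-1$; this yields $\frac{n}{r+1}(k-1)$ edges.

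For the upper bound, the key observation is that both target quantities have the form $\rho\,n$ for a fixed ``density'' $\rho$ (namely $\rho=\binom{k}{r}/k$ when $k>r+1$, and $\rho=(k-1)/(r+1)$ when $r\ge k$). Hence it suffices to prove a per-component density statement and then sum over the connected components: if $\mathcal{H}$ decomposes into components $C_1,\dots,C_p$ with $s_i$ vertices and $e_i$ edges, and each satisfies $e_i\le\rho\,s_i$, then $\sum_i e_i\le\rho\sum_i s_i=\rho\,n$, since the $s_i$ partition $n$. So I would isolate the following as the main lemma: a connected $\mathcal{B}P_k$-free $r$-uniform hypergraph on $s$ vertices with $e$ edges satisfies $e\,k\le s\binom{k}{r}$ when $k>r+1$, and $e\,(r+1)\le s\,(k-1)$ when $r\ge k$.

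The engine for the main lemma is an Erd\H{o}s--Gallai-style analysis of a longest Berge path, which I find cleanest to phrase through the incidence bipartite graph $G(\mathcal{H})$ whose parts are the vertices and the edges of $\mathcal{H}$ with adjacency given by containment; here a $\mathcal{B}P_k$ corresponds precisely to a path in $G(\mathcal{H})$ through $k+1$ vertex-nodes and $k$ edge-nodes. I would take a longest Berge path $v_1e_1v_2\cdots e_tv_{t+1}$ in a component, note $t\le k-1$ by $\mathcal{B}P_k$-freeness, and argue by induction on the number of edges (deleting a carefully chosen edge incident to an endpoint, or peeling off a vertex of low incidence when one exists). The two ranges split here: when $r\ge k$ the edges are so large that the binding constraint is the availability of $k$ \emph{distinct} edges that can be threaded through at most $t+1\le k$ distinct defining vertices, which forces the $(r+1)$-block structure; when $k>r+1$ the binding constraint is instead the number of distinct defining \emph{vertices}, which forces the complete-$k$-vertex structure exactly as in the graph case $r=2$ of Theorem~\ref{EG}.

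The main obstacle is the main lemma itself, and within it the tightness and equality analysis. Proving the inequality is one thing; extracting that equality throughout forces every component to be \emph{exactly} the extremal gadget---and therefore that the global bound is attained only when $k\mid n$ (respectively $(r+1)\mid n$)---requires tracking the equality cases of the induction and ruling out mixtures of differently sized components whose densities happen to average to $\rho$. I expect the $r\ge k$ regime to be the more delicate one: there a single component may legitimately spread many edges over many vertices (for instance a loose sunflower, for which the inequality $e\,(r+1)\le s\,(k-1)$ holds with genuine slack), so the density bound is far from tight away from the extremal blocks, and the argument must show that saturating $\rho$ simultaneously in every component is possible only for the $(r+1)$-block configuration.
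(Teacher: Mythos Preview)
The paper does not contain a proof of this theorem. Theorem~\ref{thmGKL} is stated in the Introduction as a cited background result, attributed to Gy\H{o}ri, Katona and Lemons~\cite{GKL}, and the authors make no attempt to reprove it; it serves only to situate their own contributions. There is therefore no ``paper's own proof'' against which your proposal can be compared.

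As for the proposal itself: your overall architecture---reduce to connected components, then prove a per-component density bound via a longest-Berge-path argument, possibly mediated by the incidence bipartite graph---is indeed the strategy of the original paper~\cite{GKL}. But what you have written is a plan rather than a proof. The substantive content is entirely deferred to the ``main lemma'', and for that lemma you offer only the phrase ``argue by induction on the number of edges (deleting a carefully chosen edge incident to an endpoint, or peeling off a vertex of low incidence when one exists)''. This is where all the work lies, and you have not done it: you have not said which edge or vertex to delete, why the induction hypothesis applies after deletion, or how the two regimes $k>r+1$ and $r\ge k$ actually diverge at the technical level beyond the heuristic that one is vertex-limited and the other edge-limited. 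You also correctly flag the equality analysis as delicate, but again do not carry it out. So the proposal identifies the right skeleton but contains no genuine argument for the hard step.
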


Not much later, Davoodi, Gy\H{o}ri, Methuku, and Tompkins \cite{DGMT} showed that $ex_r(n,P_{r+1})\le n$, which matches the above upper bound of the case $k>r+1$.

Similarly, the function $\ex_r(n,\mathcal{BC}_{\ge k})$ behaves very differently when $k\ge r+2$ and $k\le r+1$ with an exceptional third case when $k=r$. 
F\"{u}redi, Kostochka and Luo~\cite{FKL19,FKL21} initiated the study of $\ex_r(n,\mathcal{BC}_{\ge k})$. 
In~\cite{FKL19}, they obtained sharp bounds and extremal constructions for $k\ge r+3\ge 6$ and infinitely many $n$. 
In~\cite{FKL21}, they obtained the exact bounds and extremal constructions for $k\ge r+4$ and all $n$. 
Later for the case $k\le r-1$, Kostochka and Luo~\cite{KL} obtained a sharp upper bound for infinitely many $n$. 
For the cases $k=r+1$ and $k=r+2$, Ergemlidze, Gy\H{o}ri, Methuku, Salia, Tompkins and Zamora~\cite{EGMSTZ} obtained sharp bounds.
In 2021, Gy\H{o}ri, Lemons, Salia and Zamora \cite{GLSZ} provided a simple proof of Kostochka and Luo's result of \cite{KL} when $k<r$, and they also determined $\ex_r(n,\mathcal{BC}_{\ge r})$.

\begin{theorem}[F\"{u}redi, Kostochka and Luo, \cite{FKL19,FKL21}]
Fix integers $k$ and $r$ such that $k\ge r+3\ge 6$. 
Then,
\[\ex_r(n,\mathcal{BC}_{\ge k})\le \frac{n-1}{k-2}\binom{k-1}{r}.\]
\end{theorem}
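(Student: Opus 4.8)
The plan is to localize all the edges into $2$-connected ``blocks'', each of which behaves like the extremal building block $K^{(r)}_{k-1}$ (the complete $r$-graph on $k-1$ vertices, which has $\binom{k-1}{r}$ edges and, having only $k-1$ vertices, no Berge cycle of length $k$), and to glue the per-block estimates together along a tree. First I would pass to the $2$-shadow $G=\partial\mathcal{H}$, the graph on $V(\mathcal{H})$ with $uv\in E(G)$ whenever some hyperedge contains $\{u,v\}$. Because $r\ge 3$, every hyperedge spans a $2$-connected clique $K_r$ in $G$, so $G$ has no bridge and each hyperedge lies inside a single block of $G$; the blocks $B_1,\dots,B_t$ thus partition $E(\mathcal{H})$. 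With $\mathcal{B}_i$ the sub-hypergraph carried by $B_i$, $m_i=|V(B_i)|$ and $c$ the number of components, the block--cut-vertex identity gives $\sum_i(m_i-1)=n-c$. Hence it suffices to prove the per-block bound $e(\mathcal{B}_i)\le\frac{m_i-1}{k-2}\binom{k-1}{r}$, since summing yields $e(\mathcal{H})\le\frac{n-c}{k-2}\binom{k-1}{r}\le\frac{n-1}{k-2}\binom{k-1}{r}$.

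For a block with $m_i\le k-1$ this is automatic: $e(\mathcal{B}_i)\le\binom{m_i}{r}$, and a short computation shows that $m\mapsto\frac{k-2}{m-1}\binom{m}{r}$ is nondecreasing up to $m=k-1$, where it equals $\binom{k-1}{r}$, so $\binom{m_i}{r}\le\frac{m_i-1}{k-2}\binom{k-1}{r}$. All the work is therefore in the $2$-connected blocks with $m_i\ge k$, for which I would prove the contrapositive: if such a block had more than $\frac{m_i-1}{k-2}\binom{k-1}{r}$ edges, it would contain a Berge cycle of length at least $k$.

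To build that long Berge cycle I would take a longest Berge cycle $C$ in $\mathcal{B}_i$ and suppose for contradiction that its length $\ell$ satisfies $\ell<k$. Since $m_i\ge k$ there are vertices off $C$, and $2$-connectivity supplies (via Menger's theorem) two internally disjoint connections from the outside to $C$; the surplus of hyperedges forced by the density assumption then provides the extra incidences needed to reroute $C$ into a strictly longer Berge cycle through these connections, contradicting maximality. The density threshold $\frac{m_i-1}{k-2}\binom{k-1}{r}$ is exactly calibrated so that this rerouting can be carried out until the cycle reaches length $k$; this is the Berge analogue of the Kopylov/Bondy mechanism by which a sufficiently dense $2$-connected graph is forced to have large circumference (cf.\ the Erd\H{o}s--Gallai bound of Theorem~\ref{EG}).

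The main obstacle, and the reason the hypothesis $k\ge r+3$ is indispensable, is that the defining hyperedges of a Berge cycle must be pairwise \emph{distinct}: every rotation and re-routing must be accompanied by a choice of a previously unused hyperedge for each new consecutive pair, which turns the closing step into a system-of-distinct-representatives (Hall-type) problem for the hyperedges covering the pairs along the cycle. The condition $k\ge r+3$ gives the slack that makes this feasible---each defining hyperedge uses only two of its $r$ vertices as consecutive defining vertices, and the blocks are large enough ($k-1\ge r+2$) to supply the required distinct representatives. Carrying out this representative bookkeeping inside the rotation argument, and verifying that the stated density is precisely the point at which it breaks the ``no long Berge cycle'' assumption, is the technical heart of the proof.
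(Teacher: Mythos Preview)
The paper does not contain a proof of this theorem: it is quoted in the introduction as a result of F\"uredi, Kostochka and Luo with citations \cite{FKL19,FKL21}, and is used only as background. So there is no paper-internal proof to compare your proposal against.

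As a standalone sketch, your reduction is the right one and matches the architecture of the original F\"uredi--Kostochka--Luo proof: pass to the $2$-shadow, observe that each hyperedge sits in a single $2$-connected block, use the block--cut-tree identity $\sum_i(m_i-1)=n-c$, and reduce to a per-block inequality. The small-block case $m_i\le k-1$ is handled exactly as you describe.

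The genuine gap is the large-block case. You describe it as ``take a longest Berge cycle, use $2$-connectivity and Menger to get two disjoint handles, and reroute using the surplus hyperedges,'' but this is not what F\"uredi--Kostochka--Luo actually do, and it is not clear your sketch can be completed. Their argument for the $2$-connected part is a Kopylov-style \emph{disintegration} (iterated deletion of low-degree vertices from an auxiliary structure) together with careful counting tailored to Berge cycles; the role of the hypothesis $k\ge r+3$ is not merely to provide ``slack for a system of distinct representatives'' in a rotation argument, but to make the numerical comparisons in that disintegration go through. A bare rotation/Menger argument runs into the usual Berge difficulty you mention---distinctness of defining hyperedges---but you have not indicated any concrete mechanism to resolve it, and the Hall-type heuristic you offer is not how the original proof proceeds. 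So while the outer shell of your proof is correct and standard, the part you label ``the technical heart'' is precisely where all the content lies, and your proposal does not supply it.
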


\begin{theorem}[Ergemlidze et al., 
\cite{EGMSTZ}]
Fix integers $r$ and $k$ such that $k\ge 4$. 
If $k=r+1$, then $\ex_r(n,\mathcal{BC}_{\ge r+1})\le n-1$. If $k=r+2$, then $\ex_r(n,\mathcal{BC}_{\ge r+2})\le \frac{(n-1)(r+1)}{r}$.  
\end{theorem}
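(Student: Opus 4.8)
The plan is to reduce both inequalities to a single statement about the bipartite incidence graph of the extremal hypergraph. Write $\mathcal{H}$ for an $n$-vertex, $m$-edge $r$-uniform hypergraph with no Berge cycle of length $\ge k$, and form the bipartite graph $G$ with parts $A=V(\mathcal{H})$ and $B=E(\mathcal{H})$, joining $v\in A$ to $e\in B$ whenever $v\in e$. Then every vertex of $B$ has degree exactly $r$, so $e(G)=rm$. Since $G$ is bipartite, a cycle of length $2\ell$ in $G$ alternates between $A$ and $B$ and is precisely a Berge cycle of length $\ell$ in $\mathcal{H}$; hence $\mathcal{H}$ is $\mathcal{BC}_{\ge k}$-free if and only if $G$ has no cycle of length $\ge 2k$. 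Passing to connected components only helps, because the per-component bounds telescope across the $-1$ terms, so it suffices to prove, for connected $G$ and $k\in\{r+1,r+2\}$, that $e(G)\le (k-1)(|A|-1)$; dividing by $r$ recovers $m\le n-1$ and $m\le (n-1)(r+1)/r$ respectively. When $r=2$ this is exactly the circumference version of the Erd\H{o}s--Gallai theorem, so the real target is its degree-constrained bipartite generalisation in the regime $k\le r+2$, where the extremal bound depends only on $|A|$.

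The engine is a longest-path argument adapted to the one-sided degree constraint. Take a longest path $P=u_0u_1\cdots u_L$ in $G$. If an endpoint, say $u_0$, lies in $B$, then all $r$ of its neighbours lie on $P$ (otherwise $P$ extends) and occupy $A$-positions $1\le p_1<\cdots<p_r$; hence $p_r\ge 2r-1$, and $u_0u_{p_r}u_{p_r-1}\cdots u_1u_0$ is a cycle of length $p_r+1\ge 2r$. This pins the circumference of $G$ near $2r$, which for $k=r+1$ is exactly the threshold $2(k-1)$ and for $k=r+2$ leaves just one extra unit of slack. Thus the hypothesis supplies structural control rather than an outright contradiction, and the substance of the proof is an \emph{edge count} at this rigid circumference.

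To convert structure into the sharp count I would induct on $|A|$ using the block decomposition of $G$. A leaf block $L$ meets the rest of $G$ in a single cut vertex; deleting the non-cut $A$-vertices of $L$ together with the $B$-vertices that thereby lose all their neighbours reduces $|A|$, and the $-1$ per block telescopes across shared cut vertices to yield the global $(k-1)(|A|-1)$, exactly as in the Erd\H{o}s--Gallai induction. What must be supplied is the per-block inequality: a $2$-connected bipartite graph with every $B$-vertex of degree $r$ and no cycle of length $\ge 2k$ has at most $(k-1)(|A\cap L|-1)$ edges. Here the longest-path analysis, applied inside the block, forces the $r$ neighbours of a $B$-endpoint to be bunched among the earliest $A$-vertices; together with $2$-connectivity and the circumference cap this rigidly constrains how further $B$-vertices may attach, and a direct count finishes. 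For $k=r+1$ (cap $2r$) the admissible blocks are essentially unique, whereas for $k=r+2$ (cap $2r+2$) the extra slack admits denser blocks built from overlapping short Berge cycles, which is what produces the larger ratio $(r+1)/r$.

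The main obstacle is precisely this $2$-connected per-block count. Crude estimates — the average degree of $G$, or a blunt application of Erd\H{o}s--Gallai to $G$ itself — lose a constant factor, because replacing each hyperedge by a degree-$r$ star dilutes the degrees; matching the exact constant $(k-1)$ requires exploiting that $B$-vertices have degree \emph{exactly} $r$ to exclude the extra incidences a non-bipartite extremal graph would carry. The delicate point is the boundary configuration in which a longest path's $B$-endpoint has all $r$ neighbours consecutive, producing a cycle of length exactly $2r$: one must show that inside a $2$-connected block this cannot recur too often without either creating a cycle of length $\ge 2k$ or forcing a pendant structure already removed by the induction. The case $k=r+2$ is the harder of the two, since the additional unit of circumference must be tracked carefully through the discharging to certify that the denser blocks saturate $(r+1)(|A|-1)$ and no more.
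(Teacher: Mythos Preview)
First, a clarification: the paper does not prove this theorem. It is quoted in the introduction, with attribution to Ergemlidze, Gy\H{o}ri, Methuku, Salia, Tompkins and Zamora~\cite{EGMSTZ}, as one of several background results on $\ex_r(n,\mathcal{BC}_{\ge k})$. There is therefore no ``paper's own proof'' to compare your proposal against.

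On the substance of your plan: the reduction to the bipartite incidence graph $G$ is correct, and the equivalence between Berge cycles of length $\ell$ in $\mathcal{H}$ and cycles of length $2\ell$ in $G$ is exactly right. The target reformulation $e(G)\le (k-1)(|A|-1)$ does recover both inequalities after dividing by $r$. This framework is close in spirit to arguments used elsewhere in this literature (for instance in~\cite{GLSZ} for the range $k\le r$).

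That said, what you have written is a strategy, not a proof. You yourself identify the $2$-connected per-block inequality as ``the main obstacle'' and then describe why it is delicate rather than establish it. But that inequality carries the entire content of the theorem; the block decomposition and the longest-path endpoint observation are standard scaffolding. Nothing in your write-up actually bounds the number of edges inside a $2$-connected block by $(k-1)(|A\cap L|-1)$, so the argument is incomplete at its core.

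There is also a technical slip in the block induction. When a leaf block's cut vertex lies in $B$, that vertex survives into the remaining graph with degree strictly less than $r$, so the hypothesis ``every $B$-vertex has degree exactly $r$'' is lost for the inductive step. Your longest-path observation relies on degree exactly $r$ to force a cycle of length $\ge 2r$; with degree $d<r$ you only get a cycle of length $\ge 2d$, which is too weak. Relatedly, the telescoping you invoke, $\sum_L(|V(L)|-1)=|V(G)|-1$, runs over \emph{all} vertices, not just those in $A$, so it does not directly yield $\sum_L(|A\cap L|-1)=|A|-1$; you would need to account separately for cut vertices lying in $B$. These issues are fixable, but they require work you have not supplied.
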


\begin{theorem}[Kostochka and Luo, \cite{KL}]
Fix integers $k$ and $r$ such that $r\ge k+1\ge 5$. 
Let $\mathcal{H}$ be an $n$-vertex $r$-uniform multi-hypergraph, each edge of which has multiplicity at most $k-2$.
If $\mathcal{H}$ is $\mathcal{BC}_{\ge k}$-free, then $e(\mathcal{H})\le \frac{(k-1)(n-1)}{r}$.
\end{theorem}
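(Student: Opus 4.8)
The plan is to reduce to the connected case, reformulate the target as a statement about total degree, and then run a block decomposition whose $2$-connected pieces are controlled by a longest-Berge-path rotation argument. First I would write $\mathcal{H}$ as the disjoint union of its connected components $\mathcal{H}_1,\dots,\mathcal{H}_c$ with $n_i=|V(\mathcal{H}_i)|$. Each $\mathcal{H}_i$ is again $r$-uniform, $\mathcal{BC}_{\ge k}$-free, and has edge-multiplicity at most $k-2$, and since $\sum_i(n_i-1)=n-c\le n-1$, the desired inequality for $\mathcal{H}$ follows by summing the bounds for the $\mathcal{H}_i$. Hence I may assume $\mathcal{H}$ is connected. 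For a connected hypergraph the target $r\,e(\mathcal{H})\le (k-1)(n-1)$, after substituting $\sum_{v}\deg_{\mathcal{H}}(v)=r\,e(\mathcal{H})$, becomes
\[
\sum_{v\in V(\mathcal{H})}\deg_{\mathcal{H}}(v)\ \le\ (k-1)(n-1),
\]
which is formally the same inequality as the conclusion $2e(G)\le (k-1)(n-1)$ of the Erd\H{o}s--Gallai bound for $\mathcal{C}_{\ge k}$-free graphs stated above. This coincidence is the compass for the argument: the extremal configuration should mirror the graph one, a tree of blocks in which each block adds $r$ new vertices (meeting the previous structure in a single cut vertex) and carries exactly $k-1$ edges, so that $n-1=\sum r$ over the blocks and the edge count is $(k-1)\cdot\frac{n-1}{r}$.

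Guided by this, I would set up a Berge analogue of the block decomposition: maximal cut-vertex-free sub-hypergraphs (blocks) meeting pairwise in single cut vertices, with the bookkeeping identity $\sum_{B}(|V(B)|-1)=n-1$. Since $e(\mathcal{H})=\sum_B e(B)$, the whole theorem reduces to the per-block bound $e(B)\le \frac{(k-1)(|V(B)|-1)}{r}$ for a connected, cut-vertex-free, $\mathcal{BC}_{\ge k}$-free block $B$; summing over the block tree then recovers $\frac{(k-1)(n-1)}{r}$ exactly, with the ``$-1$'' coming for free from the identity above.

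For the per-block bound I would use a longest Berge path with defining vertices $u_0,\dots,u_\ell$ and distinct defining edges $f_1,\dots,f_\ell$, together with a P\'osa-type rotation. Maximality forces edges meeting an endpoint to have their useful incidences inside the path, and iterating endpoint rotations produces a large set of admissible endpoints on the same support. The feature specific to $r\ge k$ is that a single defining edge already contains $r\ge k$ path vertices, so only a few edges sharing vertices are needed to close up a cycle; one shows that in a cut-vertex-free block carrying more than $\frac{(k-1)(|V(B)|-1)}{r}$ edges the incidence structure is rich enough to route a Berge cycle of length at least $k$ through the path, a contradiction. This is exactly where the multiplicity hypothesis enters: closing the cycle requires exhibiting $k$ \emph{distinct} defining edges, and the bound ``multiplicity $\le k-2$'' is what both prevents a short support from impersonating a long cycle and still leaves enough distinct edges to complete a genuine $\mathcal{BC}_{\ge k}$.

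I expect the main obstacle to be precisely this $2$-connected rotation core: converting ``enough edges'' into an explicit Berge cycle of length $\ge k$ while honestly tracking the distinctness of its $k$ defining edges under the multiplicity-$(k-2)$ regime, and with constants sharp enough to yield the exact factor $\frac{k-1}{r}$ rather than merely an asymptotic bound. A tempting shortcut---building an auxiliary $\mathcal{C}_{\ge k}$-free graph with at least $\frac r2\,e(\mathcal{H})$ edges and invoking the Erd\H{o}s--Gallai bound to obtain $e(\mathcal{H})\le \frac{(k-1)(n-1)}{r}$---is unlikely to work cleanly, since the natural auxiliary graph (the union of cliques on the hyperedges) already contains long cycles when $r\ge k$; the cycle must therefore be produced inside the Berge structure itself, which is the delicate part of the proof.
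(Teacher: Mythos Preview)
This theorem is quoted in the introduction as a known result of Kostochka and Luo; the paper does not prove it, so there is no in-paper argument to compare your proposal against. The paper does remark that \cite{GLSZ} later gave a simple proof, and the paper's own structural tool (Lemma~\ref{lemma20}) is in that spirit: from a longest Berge path one extracts, via endpoint rotations, a vertex set $S$ of size about $2r$ with $|N_{\mathcal{H}}(S)|\le k-1$, then deletes $S$ and inducts on the remainder. That scheme produces the ratio $(k-1)/r$ directly, with no block decomposition and no separate $2$-connected analysis.

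Your plan goes a genuinely different way, and the reductions to components and then to blocks via $\sum_B(|V(B)|-1)=n-1$ are sound in principle. But, as you yourself flag, the entire content lands in the unproved per-block bound $e(B)\le\frac{(k-1)(|V(B)|-1)}{r}$ for $2$-connected $B$, and that is not a technicality: it is essentially the full theorem restricted to the $2$-connected case, and your rotation sketch does not supply the step that converts ``too many edges'' into an explicit Berge cycle of length $\ge k$ with $k$ \emph{distinct} defining edges. The neighbourhood-set method sidesteps this difficulty entirely---it never invokes $2$-connectivity, handles the multiplicity cap transparently, and hits the sharp constant without a per-block argument. If you want to complete your outline, a lemma of the $|N_{\mathcal{H}}(S)|\le k-1$ type for the blocks is exactly the missing piece.
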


\begin{theorem}[Gy\H{o}ri et al., \cite{GLSZ}]
Fix integers $k$ and $r$ such that $r>k\ge 3$. 
Then,
\[
\ex_r(n,\mathcal{BC}_{\ge k})=(k-1)\left\lfloor\frac{n-1}{r}\right\rfloor+\mathbbm{1}_{r|n},
\]
where $\mathbbm{1}_{r|n}=1$ if $r|n$ and $\mathbbm{1}_{r|n}=0$ otherwise. 
\end{theorem}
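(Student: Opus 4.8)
The plan is to prove the matching lower and upper bounds separately, with the upper bound resting on a decomposition of the hypergraph into blocks together with a structural lemma that bounds the number of edges in each block.

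For the lower bound I would exhibit an $n$-vertex $\mathcal{BC}_{\ge k}$-free hypergraph with the required number of edges. The building block is an $(r+1)$-set carrying $k-1$ of its $r+1$ many $r$-subsets as hyperedges; since it has only $k-1$ edges, the longest Berge cycle it supports has length at most $k-1$, so it is $\mathcal{BC}_{\ge k}$-free. I would glue $\lfloor(n-1)/r\rfloor$ such blocks in a tree-like fashion, each pair of consecutive blocks sharing a single cut vertex, so that the block-cut structure is a tree and every Berge cycle is confined to a single block. This uses $r\lfloor(n-1)/r\rfloor+1$ vertices and yields $(k-1)\lfloor(n-1)/r\rfloor$ edges, the remaining vertices being left isolated. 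When $r\mid n$ exactly $r-1$ vertices remain, which together with one already-used cut vertex form an $r$-set that can be added as one further hyperedge (attached at a single vertex, hence creating no new Berge cycle), accounting for the additive $\mathbbm{1}_{r|n}$ term.

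For the upper bound, let $\mathcal{H}$ be any $n$-vertex $\mathcal{BC}_{\ge k}$-free $r$-uniform hypergraph and decompose it into its blocks (maximal subhypergraphs without a cut vertex). A single-edge block spans exactly $r$ vertices, while a block with at least two edges spans at least $r+1$ vertices. The standard block-cut tree identity gives $\sum_i(|V(B_i)|-1)=n-c$, where the sum is over all blocks and $c$ is the number of components, so in particular $\sum_i(|V(B_i)|-1)\le n-1$. The heart of the argument is the following structural lemma: a $2$-connected $r$-uniform $\mathcal{BC}_{\ge k}$-free hypergraph has at most $k-1$ edges. Granting this, if $s_1$ and $s_2$ denote the numbers of single-edge blocks and of blocks with at least two edges, then $e(\mathcal{H})\le s_1+(k-1)s_2$ subject to the budget constraint $s_1(r-1)+rs_2\le n-1$. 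Maximising $s_1+(k-1)s_2$ over nonnegative integers under this constraint is a short calculation: since $k\ge 3$, each extra two-edge block is at least as profitable as the single-edge blocks it displaces, so the optimum pushes $s_2$ up to $\lfloor(n-1)/r\rfloor$, and the leftover budget admits one more single-edge block precisely when $(n-1)\bmod r=r-1$, that is, when $r\mid n$. This yields exactly $(k-1)\lfloor(n-1)/r\rfloor+\mathbbm{1}_{r|n}$.

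The main obstacle is the structural lemma, and this is precisely where the hypothesis $r>k$ is used. I would prove it by taking a longest Berge cycle $C$ in the block and assuming for contradiction that its length $\ell$ satisfies $\ell\le k-1$. If the block has at least $k>\ell$ edges, then some edge $f$ lies off $C$; invoking $2$-connectivity (a fan/Menger argument yielding two vertex-disjoint Berge paths from $f$ to $C$) I would splice $f$ into the cycle. The key point is that every defining edge of $C$ has $r$ vertices but contributes only one defining vertex to $C$, and $r>k-1\ge\ell$, so there is always a supply of unused vertices to serve as new, distinct connecting vertices when rerouting. This lets me realise the enlarged edge sequence as a genuine Berge cycle longer than $C$, contradicting maximality and forcing $e(\mathcal{B})=\ell\le k-1$. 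Making the rerouting rigorous — keeping all defining vertices and edges distinct while guaranteeing a net increase in length — is the delicate step; the finite case analysis hidden in the integer optimisation, and the verification that the target function is monotone in $n$, are routine by comparison.
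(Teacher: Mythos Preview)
The theorem you are asked to prove is quoted from \cite{GLSZ}; the present paper does not supply its own proof, so there is no in-paper argument to compare against directly. What the paper does contain is the machinery that \cite{GLSZ} developed, adapted here as Lemma~\ref{lemma20}: one analyses a longest Berge path (or cycle), shows that around its ends one can locate a vertex set $S$ of size at least $2r-1$ (or $2r-2$) with $N_{\mathcal H}(S)$ contained in the $\le k-1$ defining hyperedges, deletes $S$, and inducts on $n$. Your proposal takes a genuinely different route via a block--cut decomposition plus an integer optimisation; if it works it is arguably cleaner on the global side, since the optimisation step is short and transparent.

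The difficulty, however, is entirely concentrated in your structural lemma that a $2$-connected $r$-uniform $\mathcal{BC}_{\ge k}$-free block (with $r>k$) has at most $k-1$ edges, and here your sketch is too thin to count as a proof. Two concrete issues. First, ``block'' and ``$2$-connected'' for hypergraphs need to be pinned down (e.g.\ via the incidence bipartite graph), because deleting a single vertex kills every hyperedge through it and can isolate the remaining vertices of those edges; the identity $\sum_i(|V(B_i)|-1)=n-c$ therefore requires justification in this setting rather than an appeal to the graph case. Second, and more seriously, the Menger/fan step --- two internally vertex-disjoint Berge paths from an off-cycle edge $f$ to the longest Berge cycle $C$, then splicing to obtain a strictly longer Berge cycle with all defining vertices and edges distinct --- is precisely the hard part, and you essentially concede this without resolving it. In the Berge setting the rerouting is genuinely delicate: the auxiliary paths may pass through non-defining vertices of $C$, the two attachment points may lie in the same defining edge of $C$, and guaranteeing a strict length increase while keeping all defining data distinct is not automatic from $r>k-1\ge\ell$ alone. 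The published proof in \cite{GLSZ} bypasses all of this via the edge-neighbourhood/induction method above; if you want to push the block approach through you must give a complete argument for the structural lemma, not defer it.
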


\begin{theorem}[Gy\H{o}ri et al., \cite{GLSZ}]\label{thmcycle}
Let $r\ge 3$ and $n$ be positive integers. 
Then,
\[
\ex_r(n,\mathcal{BC}_{\ge r})=\max\bigg\{(r-1)\left\lfloor\frac{n-1}{r}\right\rfloor,~n-r+1\bigg\}.
\]
\end{theorem}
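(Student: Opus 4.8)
The plan is to establish matching lower and upper bounds, with the lower bound coming from two explicit $\mathcal{BC}_{\ge r}$-free constructions, one per term of the maximum. For the term $n-r+1$, I would fix a core $C$ of $r-1$ vertices and join each of the remaining $n-r+1$ vertices $u_i$ by the single hyperedge $C\cup\{u_i\}$. Since every $u_i$ lies in exactly one hyperedge and consecutive defining hyperedges of a Berge cycle must share a defining vertex, no private vertex $u_i$ can serve as a defining vertex; hence all defining vertices lie in $C$ and every Berge cycle has length at most $|C|=r-1$. For the term $(r-1)\lfloor\frac{n-1}{r}\rfloor$, I would fix one vertex $v$ and take $m=\lfloor\frac{n-1}{r}\rfloor$ blocks of size $r+1$ meeting pairwise exactly in $v$, choosing in each block any $r-1$ of its $r$-subsets as hyperedges. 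A block switch along a Berge cycle forces the shared defining vertex to be $v$, and $v$ is used at most once, so the defining hyperedges cannot leave and return to a block and therefore all lie in one block; as each block holds only $r-1<r$ hyperedges, no Berge cycle of length $\ge r$ fits. These yield the two lower bounds.

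For the upper bound I would first reduce to connected hypergraphs. Writing $f(n)=\max\{(r-1)\lfloor\frac{n-1}{r}\rfloor,\,n-r+1\}$, a short case check shows $f$ is superadditive, $f(a)+f(b)\le f(a+b)$: the slope-one branch $n-r+1$ dominates when two pieces are merged, the floor branch satisfies $\lfloor\frac{a-1}{r}\rfloor+\lfloor\frac{b-1}{r}\rfloor\le\lfloor\frac{a+b-1}{r}\rfloor$, and the mixed case uses $(r-1)\lfloor\frac{a-1}{r}\rfloor\le a$. Summing over connected components then gives $\sum_i f(n_i)\le f(n)$, so it suffices to bound $e(\mathcal H)$ for connected $\mathcal H$.

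The central tool is the incidence reformulation. Let $I=I(\mathcal H)$ be the bipartite graph with parts $X$ (the $n$ vertices) and $Y$ (the $m=e(\mathcal H)$ hyperedges), with $v\sim e$ iff $v\in e$. A cyclic alternating sequence $v_1e_1v_2\cdots v_ke_k$ is a cycle of length $2k$ in $I$ precisely when it is a Berge cycle of length $k$ in $\mathcal H$; hence $\mathcal H$ is $\mathcal{BC}_{\ge r}$-free iff $I$ is $\mathcal C_{\ge 2r}$-free, and every $Y$-node has degree exactly $r\ge 3$. I would then decompose $I$ into its $2$-connected blocks together with the bridges (the latter accounting for the degree-one vertices of $\mathcal H$), note that each block is a $2$-connected bipartite graph of circumference at most $2r-2$, and bound the number of $Y$-nodes it contributes using a bipartite Erd\H{o}s--Gallai-type estimate combined with the degree-$r$ condition. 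Both extremal families are visible here: the first appears as a block $K_{r-1,\,n-r+1}$ with a small vertex-side, the second as many blocks each carrying at most $r-1$ hyperedges. Telescoping these contributions along the block--cut tree, equivalently peeling a leaf block attached at a single cut vertex and inducting on the number of blocks, should give $m\le f(n)$.

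The main obstacle is exactly this aggregation. Bounding each block in isolation is too lossy (a single block can already realize either regime, as the two constructions show), and recovering the sharp constant together with the floor term $\lfloor\frac{n-1}{r}\rfloor$ requires matching the global optimum over the block--cut tree. I expect the decisive ingredient to be a clean bipartite long-cycle bound, stated so that it telescopes correctly across cut vertices: that in a $\mathcal C_{\ge 2r}$-free bipartite graph whose $Y$-side is $r$-regular, each block either has small vertex-side (yielding the $n-r+1$ regime) or few $Y$-nodes (yielding the floor regime), with additive constants tuned to the cut vertices and degree-one vertices. Establishing this bipartite estimate with exact constants, and checking the boundary cases where $n$ is small or $r\mid n$, is where the real work lies.
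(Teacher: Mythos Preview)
The present paper does not prove Theorem~\ref{thmcycle}; it is quoted from~\cite{GLSZ} and invoked as a black box in the proof of Theorem~\ref{thm2}, so there is no in-paper argument to compare against.

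On the proposal itself: your two lower-bound constructions are correct, the superadditivity reduction to connected $\mathcal H$ is fine, and the incidence-graph reformulation (Berge cycles of length $k$ $\leftrightarrow$ $2k$-cycles in $I(\mathcal H)$) is standard and sound. The gap is exactly where you place it. The upper bound is never actually established: you reduce it to a ``clean bipartite long-cycle bound'' on the $2$-connected blocks of $I(\mathcal H)$ that you neither state precisely nor prove, and you yourself say that making it telescope with the correct additive constants through the block--cut tree ``is where the real work lies.'' Without that bound there is no proof, only an outline, and recovering both the floor $\lfloor\frac{n-1}{r}\rfloor$ and the crossover to the $n-r+1$ regime is genuinely the crux rather than a routine detail.

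For orientation, the argument in~\cite{GLSZ} does not pass through a bipartite Erd\H{o}s--Gallai block estimate. It proceeds, in the same spirit as Lemma~\ref{lemma20} and Section~\ref{sec4} of this paper, by analysing a longest Berge path and rerouting it to exhibit a vertex set $S$ whose edge-neighbourhood $N_{\mathcal H}(S)$ is small relative to $|S|$; deleting $S$ and inducting produces the floor term directly from the step sizes, with the $n-r+1$ branch arising from a separate sunflower-type configuration. If you wish to complete your own approach you must formulate and prove the bipartite block estimate with exact constants; otherwise the edge-neighbourhood induction of~\cite{GLSZ} is the shorter route.
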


\begin{theorem}[Gy\H{o}ri et al., \cite{GLSZ}]\label{multi}
Fix integers $n,k$ and $r$ such that $2\le k\le r$. Let $\mathcal{H}$ be an $n$-vertex $r$-uniform multi-hypergraph which is $\mathcal{BC}_{\ge k}$-free. 
Then,
\[
e(\mathcal{H})\le (k-1)\left\lfloor\frac{n-1}{r-1}\right\rfloor.
\]
\end{theorem}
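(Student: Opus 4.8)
The plan is to prove the bound by induction on $n$, exploiting the arithmetic identity
$(k-1)\lfloor\frac{n-1}{r-1}\rfloor=(k-1)+(k-1)\lfloor\frac{(n-(r-1))-1}{r-1}\rfloor$, valid for $n\ge r$, which says that the right-hand side drops by exactly $k-1$ each time $n$ decreases by $r-1$. The base cases are immediate: if $e(\mathcal{H})=0$ there is nothing to prove, and whenever $n<r$ the hypergraph is automatically edgeless while $\lfloor\frac{n-1}{r-1}\rfloor=0$. Thus the whole argument reduces to finding, in the inductive step, a cheap chunk to peel off.

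The engine of the induction is the following reduction lemma. \textbf{Key Lemma.} \emph{Every $r$-uniform multi-hypergraph with $r\ge k$, at least one edge, and no Berge cycle of length $\ge k$ contains a set $S$ of $r-1$ vertices that is incident to at most $k-1$ edges in total.} Granting this, I would delete such an $S$ together with the at most $k-1$ edges meeting it; the resulting multi-hypergraph $\mathcal{H}'$ has $n-(r-1)$ vertices, is still $\mathcal{BC}_{\ge k}$-free, and satisfies $e(\mathcal{H})\le e(\mathcal{H}')+(k-1)$. Combining the induction hypothesis for $\mathcal{H}'$ with the displayed identity then closes the step, and the floor causes no difficulty because if $n-(r-1)<r$ then $\mathcal{H}'$ is edgeless and its bound reads $0\le 0$. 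One never needs $\mathcal{H}$ to be connected for this bookkeeping; if one insists on connectivity, passing to components is harmless since $\lfloor x\rfloor+\lfloor y\rfloor\le\lfloor x+y\rfloor$ and $\sum_i(n_i-1)\le n-1$ make the disconnected case only easier.

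The substance of the proof, and the step I expect to be the main obstacle, is the Key Lemma. To locate $S$ I would take an extremal (longest) Berge path $v_0e_1v_1\cdots e_\ell v_\ell$ and study its endpoint $v_\ell$ by the rotation--extension technique underlying the Erd\H{o}s--Gallai theorem. Maximality forces every non-path edge through $v_\ell$ to lie entirely inside the defining vertices $\{v_0,\dots,v_\ell\}$, since otherwise the path would extend; moreover, if such an edge also contains $v_i$, then closing up yields a Berge cycle on $v_i,v_{i+1},\dots,v_\ell$ of length $\ell-i+1$, forcing $\ell-i+1\le k-1$. Hence all edges incident to the end of the path are confined to its last $k-1$ defining vertices. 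The delicate part is to turn this confinement into a genuine count: one must control the edge multiplicities and the behaviour of the $r-2$ non-defining vertices of $e_\ell$, and then certify that a suitable $(r-1)$-subset near the endpoint collectively meets at most $k-1$ edges. The extremal configurations---a sunflower whose core has $r-1$ vertices, and the ``friendship'' hypergraph in which all edges pass through one fixed vertex with each petal repeated $k-1$ times---show that $S$ must be drawn from the ``petal'' vertices rather than the high-degree core, which is precisely what the rotation analysis has to guarantee. This is also the only place where the hypothesis $k\le r$ (equivalently $k-1\le r-1$) is used in an essential way, so I expect the bulk of the effort to lie in the lemma, with the inductive wrapper being routine.
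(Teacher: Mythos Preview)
This statement is quoted in the paper as a result of Gy\H{o}ri, Lemons, Salia and Zamora~\cite{GLSZ}; the present paper does not give its own proof but merely invokes it (once, inside the proof of Lemma~\ref{thm3}). So there is no in-paper argument to compare your proposal against.

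That said, your plan is essentially the one carried out in~\cite{GLSZ}: induct on $n$ by locating a set $S$ of $r-1$ vertices meeting at most $k-1$ hyperedges, delete $S$, and use the identity $(k-1)\lfloor\frac{n-1}{r-1}\rfloor=(k-1)+(k-1)\lfloor\frac{n-r}{r-1}\rfloor$. Your Key Lemma is precisely their structural engine, and your idea of building $S$ from the terminal edge of a longest Berge path via rotation is also theirs. Note too that the present paper's Lemma~\ref{lemma20} (proved in Section~\ref{sec4}) is the analogue of this Key Lemma in the connected $\mathcal{B}P_k$-free setting, and its proof follows the same rotation template you sketch; reading that section would show you exactly how the ``delicate part'' you flag---tracking the non-defining vertices of $e_\ell$ and absorbing enough defining vertices into $S$---is handled.

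Your sketch is accurate as far as it goes, but it stops short of the real work. Two remarks. First, your argument that a non-path edge through $v_\ell$ lies inside $\{v_0,\dots,v_\ell\}$ and then inside the last $k-1$ defining vertices actually shows more: since such an edge would have $r>k-1$ vertices confined to a $(k-1)$-set, there are \emph{no} non-path edges through $v_\ell$ at all, and the same holds for every $w\in e_\ell\setminus\{v_0,\dots,v_{\ell-1}\}$ after swapping $w$ for $v_\ell$. Second, this only gives $|e_\ell\setminus\{v_0,\dots,v_{\ell-1}\}|$ vertices with small neighbourhood, which may be fewer than $r-1$; the missing step is to recursively absorb additional defining vertices $v_{i-1}$ whenever $v_i\in e_\ell$ (or more generally whenever $e_i$ intersects the growing set outside $U$), exactly as in the construction of the sets $A_p$ in Section~\ref{sec4}. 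You have correctly located the crux but not resolved it.
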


Analogously to graphs, a hypergraph is connected if for any two of its vertices, there is a Berge path containing both vertices.
Note that the extremal $r$-uniform hypergraph when forbidding Berge paths is not connected in general. 
Let $\mathcal{F}$ be a family of $r$-uniform hypergraphs. Denote by $\ex^{\mathrm{conn}}_r(n, \mathcal{F})$ the maximum number of hyperedges in an $n$-vertex connected $\mathcal{F}$-free $r$-uniform hypergraph.
In 2018, Gy\H{o}ri, Methuku, Salia, Tompkins and Vizer~\cite{GMSTV} determined the asymptotics of $\ex^{\mathrm{conn}}_r(n, \mathcal{B}P_k)$. For the case $k\ge 4r\ge 12$, F\"{u}redi, Kostochka and Luo \cite{FKL} determined $\ex^{\mathrm{conn}}_r(n, \mathcal{B}P_k)$ when $n$ is large enough.
Independently in a different range, these numbers were also determined by Gy\H{o}ri, Salia and Zamora \cite{GSZ21}, who also proved the uniqueness of the extremal hypergraphs.
Recently, Gerbner, Nagy, Patk\'{o}s, Salia and Vizer~\cite{GNPSV} obtained a stability version of Gy\H{o}ri et al's result from~\cite{GSZ21}.

\begin{theorem}[Gy\H{o}ri et al., \cite{GMSTV}]
Let $\mathcal{H}_{n,k}$ be a connected $r$-uniform $n$-vertex hypergraph of maximum size with no Berge paths of length~$k$. Then,
\[
\lim_{k \rightarrow \infty}\lim_{n \rightarrow \infty} \frac{|E(\mathcal{H}_{n,k})|}{k^{r-1}n}=\frac{1}{2^{r-1}(r-1)!}.
\]
\end{theorem}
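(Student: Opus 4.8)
The plan is to prove matching lower and upper bounds on $\ex^{\mathrm{conn}}_r(n,\mathcal{B}P_k)$ and then take the iterated limit. For the lower bound I would exhibit one extremal family. Fix a \emph{core} $S$ of size $s=\lceil k/2\rceil-1$, let $T=V\setminus S$ with $|T|=n-s$, and take $\mathcal{H}^{*}$ to be the $r$-uniform hypergraph whose hyperedges are exactly the $r$-sets $e$ with $|e\cap S|=r-1$ and $|e\cap T|=1$ (here $k$ is large enough that $s\ge r-1$). Then $e(\mathcal{H}^{*})=\binom{s}{r-1}(n-s)$.

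To verify the construction I would first note that $\mathcal{H}^{*}$ is connected: any two vertices $x,y\in T$ are joined through a common $(r-1)$-set $A\subseteq S$ by the two-edge Berge path $x,a,y$ with $a\in A$, realized by the distinct hyperedges $A\cup\{x\}$ and $A\cup\{y\}$, and core vertices attach similarly. Next, $\mathcal{H}^{*}$ is $\mathcal{B}P_k$-free: the hyperedge realizing a consecutive pair $\{v_{i-1},v_i\}$ of defining vertices contains only one vertex of $T$, so no two consecutive defining vertices lie in $T$. Hence among the $k+1$ defining vertices of a putative $\mathcal{B}P_k$ at least $\lfloor(k+1)/2\rfloor$ must lie in $S$, forcing $|S|\ge\lfloor(k+1)/2\rfloor>s$, a contradiction. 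Letting $n\to\infty$ yields $\ex^{\mathrm{conn}}_r(n,\mathcal{B}P_k)/n\ge\binom{s}{r-1}(1-o(1))$, and since $s\sim k/2$ we have $\binom{s}{r-1}/k^{r-1}\to 1/(2^{r-1}(r-1)!)$ as $k\to\infty$, which is the required lower bound for the iterated limit.

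For the upper bound I would take a connected $\mathcal{B}P_k$-free $\mathcal{H}$ of maximum size and argue that its edges must concentrate on a small core. Concretely, the aim is to produce a vertex set $C$ with $|C|\le(1+o_k(1))k/2$ such that all but $o(n)$ hyperedges have at least $r-1$ vertices in $C$; the number of such edges is then at most $\binom{|C|}{r-1}(n-|C|)+o(n)\le\binom{(1+o_k(1))k/2}{r-1}n$, which after dividing by $k^{r-1}n$ and letting first $n\to\infty$ and then $k\to\infty$ matches the lower bound. To build $C$ I would study the degrees of $(r-1)$-subsets: a high-degree $(r-1)$-set $A$ spawns a star of edges $A\cup\{x\}$, and splicing many such stars produces a long Berge path in which \emph{hub} vertices (those lying in the various sets $A$) and \emph{leaf} vertices $x$ alternate. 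Using connectivity to chain the stars together, a core of $c$ hub vertices yields Berge paths of length roughly $2c$; since $\mathcal{H}$ is $\mathcal{B}P_k$-free this forces $2c\lesssim k$, i.e.\ the hubs fit into a set $C$ of size about $k/2$. The factor $2^{r-1}$ in the final constant is exactly the factor $2$ from this hub/leaf alternation raised to the $r-1$ hub coordinates of the core.

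The hard part will be turning the heuristic ``edges concentrate on a $(k/2)$-core'' into a rigorous statement with the \emph{exact} leading constant. Two difficulties stand out. First, the splicing of stars into a single long Berge path must keep all defining vertices and all defining hyperedges distinct while achieving length close to $2|C|$, and controlling this simultaneously with the connectivity routing is delicate. Second, I must dispose of vertices of intermediate degree and of the $o(n)$ exceptional hyperedges not captured by $C$, showing they cannot accumulate enough to alter the leading coefficient; this is precisely where a naive reduction to the graph case (assigning each hyperedge a single pair and invoking Kopylov's connected version of the Erd\H{o}s--Gallai theorem) loses a factor of order $r$ and must be replaced by the full $(r-1)$-dimensional core count above. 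I expect the precise extremal analysis to be the main technical burden, whereas the asymptotic constant $1/(2^{r-1}(r-1)!)$ itself follows cleanly once the core-concentration step is in place.
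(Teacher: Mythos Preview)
The present paper does not prove this theorem at all: it is quoted as a result of Gy\H{o}ri, Methuku, Salia, Tompkins and Vizer~\cite{GMSTV} and is included only as background motivating the exact and near-exact results that the paper goes on to establish for other ranges of $k$. There is therefore no ``paper's own proof'' to compare your proposal against.

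That said, a brief assessment of your sketch on its own merits: your lower-bound construction is correct and is essentially the standard one (it coincides, up to the lower-order terms $\binom{s}{r}$ and $\binom{s}{r-2}$, with the extremal hypergraph appearing in the Gy\H{o}ri--Salia--Zamora theorem quoted immediately afterward in the paper), and your verification that it is $\mathcal{B}P_k$-free via the alternation argument is sound. Your upper-bound plan, however, is only a heuristic outline: the step ``produce a set $C$ with $|C|\le(1+o_k(1))k/2$ such that all but $o(n)$ hyperedges meet $C$ in at least $r-1$ vertices'' is precisely the whole content of the theorem, and nothing you wrote explains how to obtain $C$. The star-splicing picture is suggestive but does not by itself force the high-degree $(r-1)$-sets to overlap in a common vertex set of size $\sim k/2$; controlling how many distinct hub vertices the various stars may use, while simultaneously routing through them via connectivity without reusing defining vertices or hyperedges, is the entire difficulty, and you have not indicated a mechanism for it. So the lower bound is complete, the upper bound is not.
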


\begin{theorem}[F\"{u}redi, Kostochka and Luo, \cite{FKL}]
Let $n\ge n'_{k,r}\ge k\ge 4r\ge 12$. Then,
\[
\ex^{\mathrm{conn}}_r(n,\mathcal{B}P_k)\le \binom{\left\lceil\frac{k+1}{2}\right\rceil}{r}+\bigg(n-\left\lceil\frac{k+1}{2}\right\rceil\bigg)\binom{\left\lfloor\frac{k-1}{2}\right\rfloor}{r-1}.
\]
\end{theorem}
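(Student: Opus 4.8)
The plan is to transport Kopylov's longest-path method, which underlies the graph statement of Kopylov and of Balister, Gy\H{o}ri, Lehel and Schelp, into the Berge setting, combining it with a degree-peeling reduction in the style of F\"{u}redi, Kostochka and Luo. Throughout, write $a:=\left\lfloor\frac{k-1}{2}\right\rfloor$ and $b:=\left\lceil\frac{k+1}{2}\right\rceil=k-a$, so that the claimed bound is $\binom{b}{r}+(n-b)\binom{a}{r-1}$. First I would reduce to the $2$-connected case: decompose $\mathcal{H}$ along its cut vertices into blocks and observe that if two distinct blocks each contained a long Berge path, then, since $\mathcal{H}$ is connected, these could be linked through the cut vertices separating them into a Berge path of length $k$; hence all but one block is ``small'', and the small blocks together with the edges meeting cut vertices are absorbed into the peripheral term. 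It therefore suffices to bound $e(\mathcal{H})$ when $\mathcal{H}$ is $2$-connected.

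The core of the argument is a Berge rotation-extension lemma. Take a longest Berge path $\mathcal{P}=v_0e_1v_1\cdots e_sv_s$; since $\mathcal{H}$ is $\mathcal{B}P_k$-free we have $s\le k-1$. Using $2$-connectivity together with the maximality of $\mathcal{P}$, I would show that the set of vertices that can occur as an endpoint of some longest Berge path is ``saturated'' against the rest: no such endpoint can be joined by a fresh hyperedge to a vertex lying far along $\mathcal{P}$, for otherwise a rotation would produce a strictly longer Berge path, or close a long Berge cycle that in turn unfolds into a Berge path of length $k$. The quantitative target is a hub set $A$ with $|A|\le a$ such that every vertex outside a dense part $B\supseteq A$, $|B|\le b$, lies only in hyperedges whose remaining $r-1$ vertices fall inside $A$. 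Granting this structure, hyperedges inside $B$ number at most $\binom{|B|}{r}$, and each of the at most $n-|B|$ remaining vertices lies in at most $\binom{a}{r-1}$ hyperedges, giving the bound $\binom{|B|}{r}+(n-|B|)\binom{a}{r-1}$, which I optimize in the last step.

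The main obstacle is the rotation-extension lemma itself, which is markedly harder than in graphs for two Berge-specific reasons. First, each defining hyperedge carries $r-1$ ``passive'' vertices besides the two consecutive defining vertices it witnesses, so a rotation must re-assign defining edges while keeping the whole collection of defining edges distinct and avoiding accidental reuse of vertices; it is exactly here that the hypothesis $k\ge 4r$ provides the slack, guaranteeing that the $\Theta(r)$ vertices consumed passively per step never exhaust the $\Theta(k)$ vertices available along a near-extremal path. Second, converting a long Berge cycle back into a Berge path, and controlling how off-path vertices attach to $A$, requires a careful system of distinct representatives for the incidences $\{v_i,v_{i+1}\}\subset e_i$; I expect essentially all of the technical work, and the dependence of the threshold $n'_{k,r}$ on $k$ and $r$, to reside in this bookkeeping.

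Finally I would carry out the optimization. Writing $f_r(n,k,a'):=\binom{k-a'}{r}+(n-k+a')\binom{a'}{r-1}$ for the size of the natural join-type construction with a hub of size $a'$ (so that the target equals $f_r(n,k,a)$), the bound from the previous step is at most $\max_{a'}f_r(n,k,a')$ over the admissible range $r-1\le a'\le a$. The small-$a'$ end of this range corresponds to the competing ``clique-type'' optimum, a nearly complete $r$-uniform hypergraph on $k-r+1$ vertices with pendant edges. For large $n$ the dominant term $n\binom{a'}{r-1}$ increases with $a'$, so the maximum is attained at $a'=a=\left\lfloor\frac{k-1}{2}\right\rfloor$; the conditions $k\ge 4r$ and $n\ge n'_{k,r}$ are precisely what make this join-type value exceed the clique-type one, yielding $e(\mathcal{H})\le\binom{b}{r}+(n-b)\binom{a}{r-1}$. (One could alternatively start from the Berge-cycle Tur\'{a}n bound $\ex_r(n,\mathcal{BC}_{\ge k})\le\frac{n-1}{k-2}\binom{k-1}{r}$ stated above to pre-process the dense part, but recovering the exact constant still seems to require the dedicated rotation argument.)
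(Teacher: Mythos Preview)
The paper does not prove this statement. It is quoted in the introduction as a known result of F\"{u}redi, Kostochka and Luo (reference \cite{FKL}), alongside several other background theorems from the literature; the paper's own contributions concern the regime $k\le r$ and the case $k=r+1$, and their proofs rely on the structural Lemma~\ref{lemma20} rather than on anything resembling the Kopylov-type rotation argument you outline. There is therefore nothing in this paper against which to check your proposal.

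As a side remark on the proposal itself: what you sketch is essentially the architecture of the actual F\"{u}redi--Kostochka--Luo proof in \cite{FKL} (reduction to $2$-connected, a Berge rotation/extension lemma producing a hub of size at most $\lfloor(k-1)/2\rfloor$, then a convexity optimization over the function $f_r(n,k,a')$). But what you have written is a plan, not a proof: the ``Berge rotation-extension lemma'' is asserted rather than proved, and you explicitly flag that the bookkeeping for distinct defining edges and the cycle-to-path conversion is where the real work lies. That is accurate, and it is precisely this lemma that carries the full weight of the argument; without it the remaining steps are formal. If you intend this as an outline it is on the right track, but as it stands there is a genuine gap at the rotation step.
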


\begin{theorem}[Gy\H{o}ri, Salia and Zamora, \cite{GSZ21}]
For all integers $n, k$ and $r$ there exists an $N_{k,r}$ such that for $n>N_{k,r}$ and $k\ge 2r+13\ge 18$, 
\[
\ex^{\mathrm{conn}}_r(n,\mathcal{B}P_k)=\binom{\left\lfloor\frac{k-1}{2}\right\rfloor}{r-1}\bigg(n-\left\lfloor\frac{k-1}{2}\right\rfloor\bigg)+\binom{\left\lfloor\frac{k-1}{2}\right\rfloor}{r}+\mathbbm{1}_{2|k}\binom{\left\lfloor\frac{k-1}{2}\right\rfloor}{r-2}.
\]
The extremal hypergraph is unique.
\end{theorem}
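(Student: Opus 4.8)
The plan is to prove matching lower and upper bounds and then analyze the equality case for uniqueness. Throughout I write $s=\lfloor\frac{k-1}{2}\rfloor$ for the size of the ``core''.

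For the lower bound I would exhibit the extremal hypergraph $\mathcal{H}^*$ directly. Fix a set $A$ of $s$ vertices and leave the remaining $n-s$ vertices pendant. Put into $\mathcal{H}^*$ every $r$-set contained in $A$ (there are $\binom{s}{r}$ of these, legitimate since $s\ge r+6>r$ by the hypothesis $k\ge 2r+13$) together with every $r$-set meeting $A$ in exactly $r-1$ vertices (there are $(n-s)\binom{s}{r-1}$ of these). When $k$ is even I would additionally fix two distinguished pendant vertices $x,y$ and add the $\binom{s}{r-2}$ edges $\{x,y\}\cup S$ with $S\in\binom{A}{r-2}$. This hypergraph is connected because every pendant vertex lies in an edge together with $r-1$ core vertices while $A$ itself spans edges. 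To see that $\mathcal{H}^*$ is $\mathcal{B}P_k$-free I would track the defining vertices of any Berge path: every defining edge contains at most one vertex outside $A$ (in the odd case), while in the even case at most one edge of the path may use the special pair, so no two outside defining vertices are consecutive except for this single exception. Hence among $\ell+1$ defining vertices at least $\lfloor\frac{\ell+1}{2}\rfloor$ lie in $A$, forcing $\lfloor\frac{\ell+1}{2}\rfloor\le s$ and therefore $\ell\le 2s$; this equals $k-1$ when $k$ is odd, while for even $k$ the single gadget step raises the bound to $2s+1=k-1$. Summing the three contributions gives the claimed value as a lower bound.

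For the upper bound the heart of the argument is a structural lemma: if $\mathcal{H}$ is connected and $\mathcal{B}P_k$-free with $n$ large and $k\ge 2r+13$, then there is a set $A$ with $|A|\le s$ such that all but a number of hyperedges bounded independently of $n$ meet $A$ in at least $r-1$ vertices. Granting this, the count is immediate: edges inside $A$ number at most $\binom{s}{r}$, edges meeting $A$ in exactly $r-1$ vertices number at most $(n-s)\binom{s}{r-1}$, and the exceptional family is optimized by the even gadget to contribute at most $\mathbbm{1}_{2|k}\binom{s}{r-2}$. To prove the structural lemma I would start from a longest Berge path $P=v_0e_1v_1\cdots e_tv_t$ with $t\le k-1$ and perform P\'osa-type rotations: I would let $X$ be the set of all vertices occurring as an endpoint of some longest Berge path reachable from $P$ by rotations, and argue that no unused hyperedge can join $X$ to a vertex outside the vertex set of $P$ without producing a path of length $t+1$, contradicting maximality. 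Feeding this back through the connectivity of $\mathcal{H}$ and the largeness of $n$ forces all but boundedly many edges to funnel through a fixed set of at most $s$ vertices; the precise bound $s=\lfloor\frac{k-1}{2}\rfloor$ emerges from the same ``no two consecutive outside vertices'' parity estimate used in the construction, run in reverse.

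I expect the P\'osa-rotation step to be the main obstacle. In the graph setting rotating an endpoint trades one edge for one edge, but for Berge paths each defining hyperedge carries $r$ vertices, so a rotation can reuse or release several vertices at once and one must maintain the distinctness of both the defining vertices and the defining hyperedges throughout. Quantifying the gain of a rotation, ensuring the rotated objects remain genuine Berge paths of the same length, and converting the resulting endpoint set $X$ into the clean statement ``almost every edge has at least $r-1$ vertices in a fixed $s$-set'' is exactly where the hypotheses $k\ge 2r+13$ and $n>N_{k,r}$ are consumed: the former guarantees the core term dominates the rotation error terms, and the latter guarantees enough pendant vertices that the linear term $(n-s)\binom{s}{r-1}$ controls the count. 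Finally, for uniqueness I would reread the upper bound chain as a chain of equalities: equality forces $|A|=s$, every outside vertex to be incident to all $\binom{s}{r-1}$ edges through it, and the exceptional edges to be precisely the even gadget, which pins down $\mathcal{H}^*$ up to isomorphism.
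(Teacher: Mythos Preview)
The theorem you are attempting to prove is not proved in this paper at all: it is quoted in the Introduction as prior work of Gy\H{o}ri, Salia and Zamora~\cite{GSZ21}, with no argument supplied. Consequently there is no ``paper's own proof'' against which your proposal can be compared.

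For what it is worth, your outline is broadly in the spirit of the actual proof in~\cite{GSZ21}: the extremal construction with an $s$-vertex core $A$ and pendant vertices (plus the parity gadget when $k$ is even) is exactly right, and the upper bound there is indeed obtained via a structural lemma asserting that in an extremal hypergraph there is a set of size at most $s$ through which essentially all edges pass, derived from a rotation-type analysis of longest Berge paths. That said, your sketch is vague at precisely the points you flag as difficult: you do not specify how to perform the P\'osa rotation for Berge paths while maintaining distinctness of defining vertices and hyperedges, you do not explain where the numerical hypothesis $k\ge 2r+13$ is actually consumed, and the passage from ``no rotated endpoint leaves $V(P)$'' to ``almost every edge has $r-1$ vertices in a fixed $s$-set'' is asserted rather than argued. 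These are exactly the places where the published proof does real work, so as written your proposal is a plausible roadmap rather than a proof.
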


For the case $k\le r$, Kostochka and Luo~\cite{KL} obtained an upper bound.  
\begin{theorem}[Kostochka and Luo, \cite{KL}]\label{smallk}
Fix $r\ge k\ge 3$. Then,
\[
\ex^{\mathrm{conn}}_r(n,\mathcal{B}P_k)\le \max\bigg\{k-1,\frac{k}{2r-k+4}n\bigg\}.
\]
\end{theorem}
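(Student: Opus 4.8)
The plan is to bound $e(\mathcal{H})$ for a connected, $\mathcal{B}P_k$-free, $r$-uniform hypergraph $\mathcal{H}$ on $n$ vertices. First I would dispose of the trivial regime: if $\mathcal{H}$ has at most $k-1$ edges we are already below the first term of the maximum, so I may assume $e(\mathcal{H})\ge k$ and aim for the sharper inequality $e(\mathcal{H})(2r-k+4)\le kn$, i.e.\ that, amortized, every hyperedge must ``pay for'' at least $(2r-k+4)/k$ vertices. The governing object is a \emph{longest} Berge path $P=v_0e_1v_1\cdots e_\ell v_\ell$; since $\mathcal{H}$ is $\mathcal{B}P_k$-free we have $\ell\le k-1$, so $P$ uses at most $k\le r$ defining vertices, while each of its edges---and indeed every edge of $\mathcal{H}$---has the full size $r$. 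Note also that any Berge cycle of length $m$ contains a Berge path of length $m-1$, so $\mathcal{H}$ is automatically $\mathcal{BC}_{\ge k+1}$-free; however, the cycle bounds available for this regime only yield something of order $\tfrac{k}{r-1}n$, which is weaker than the target, so the improvement must come genuinely from connectivity.

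The heart of the argument is a family of extension and rotation lemmas exploiting precisely the inequality $r\ge k$. Because a hyperedge is at least as large as the defining-vertex set $V(P)$ of a longest path, any edge $f\notin\{e_1,\dots,e_\ell\}$ incident to an endpoint $v_0$ (or $v_\ell$) is forced to satisfy $f\subseteq V(P)$: a single vertex of $f$ lying outside $V(P)$ would let us prepend or append a fresh defining vertex and lengthen $P$, a contradiction. When $\ell+1<r$ this is impossible, so the endpoints lie only in path-edges, and standard rotations (replacing an endpoint by another vertex of the incident path-edge) propagate the same restriction to a whole family of vertices. Tracking where the $r-2$ ``extra'' vertices of each endpoint-edge can sit then forces a surplus of vertices that are private to the periphery of $P$; it is the two-sidedness of a path---two ends, each donating a block on top of the $\approx k$ vertices of the path itself---that is responsible for the denominator $2r-k+4=(r-k+2)+(r+2)$ rather than the one-sided value $r-k+2$ that one reads off from the simple core-and-petal construction.

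With these local statements in hand, I would set up an amortized (weighted) induction: peel off a peripheral edge or small pendant gadget at an end of $P$ together with its forced private vertices, check that what remains is still connected and $\mathcal{B}P_k$-free, and charge the removed edges against the removed vertices at the rate $k/(2r-k+4)$; summing gives $e(\mathcal{H})\le \tfrac{k}{2r-k+4}\,n$. I expect the main obstacle to be exactly the bookkeeping of overlaps: the private blocks at the two ends must be shown to be genuinely disjoint from the rest so that no vertex is charged twice, connectivity must be preserved at each deletion step, and the boundary case $k=r$---where $V(P)$ can fill an entire edge and the private blocks degenerate---will need a separate, tighter analysis. A secondary subtlety is that $(2r-k+4)/k$ is not an integer, so the induction has to be run with fractional weights rather than a clean vertex-per-edge assignment; as a fallback I would instead prove the inequality by a single global double count of incidences $\sum_v\deg(v)=r\,e(\mathcal{H})$ against the peripheral surplus produced by the rotation lemmas.
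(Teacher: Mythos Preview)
The paper does not contain a proof of this statement. Theorem~\ref{smallk} is quoted in the introduction as a prior result of Kostochka and Luo~\cite{KL} and is used only as context for the paper's own contributions; it is never re-proved in Sections~\ref{proofs} or~\ref{sec4}. So there is no ``paper's own proof'' against which to compare your proposal.

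As for the proposal itself, what you have written is a plan rather than a proof: you identify the right object (a longest Berge path with $\ell\le k-1$ defining vertices versus edges of size $r\ge k$), you correctly isolate the endpoint-containment observation, and you anticipate the rotation machinery. But the crucial quantitative step---turning the rotation lemmas into the specific denominator $2r-k+4$---is left as ``tracking where the extra vertices can sit'' and ``charging at the rate $k/(2r-k+4)$,'' and you explicitly flag the disjointness of the two endpoint blocks, the preservation of connectivity under deletion, and the degenerate case $k=r$ as unresolved. Those are exactly the places where the work lies, so at present the proposal is an outline with the key computation still to be done. If you want to complete it, you would need to consult Kostochka--Luo~\cite{KL} directly, since that is where the argument actually lives.
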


The remainder of this paper is structured in the following way.
In Section~\ref{results} we present the new contributions of this paper.
In Section~\ref{proofs}, we give the proofs of our main results. 
In Section~\ref{sec4}, we present our postponed proof of Lemma~\ref{lemma20}.
Finally, in Section~\ref{seccr}, we conclude this paper with some final remarks.

\section{Results}\label{results}
In this paper, we present some improvements of Theorem~\ref{smallk}. 
First, we discuss the situation for some small values of $k$.
For $k=2$, we observe that $\ex^{\mathrm{conn}}_r(n,\mathcal{B}P_2)=1$ when $n=r$ and it is not defined for larger $n$.
In the following, we consider the cases $k=3$, $k=4$ and $k\ge 5$, respectively.

\begin{proposition}\label{prop1}
Fix $n\ge r\ge 3$. If $n\le 2r-2$, then $\ex^{\mathrm{conn}}_r(n,\mathcal{B}P_3)=2$ and the extremal hypergraphs consist of two hyperedges sharing at least two common vertices.
If $n\ge 2r-1$ and $n-1$ is a multiple of $r-1$, then $\ex^{\mathrm{conn}}_r(n,\mathcal{B}P_3)=\frac{n-1}{r-1}$ and the extremal hypergraph is a star hypergraph composed of $\frac{n-1}{r-1}$ hyperedges sharing one common vertex.
For the case $n\ge 2r-1$ and $n-1$ is not a multiple of $r-1$, the function $\ex^{\mathrm{conn}}_r(n,\mathcal{B}P_3)$ is not defined.
\end{proposition}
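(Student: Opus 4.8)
The plan is to reduce the whole statement to a single structural fact: \emph{every connected $\mathcal{B}P_3$-free $r$-uniform hypergraph with at least three hyperedges is a sunflower with a single core vertex}, i.e.\ there is a vertex $w$ contained in every hyperedge such that any two hyperedges meet only in $w$. Granting this, the three regimes follow by counting. A single-core sunflower with $m$ petals spans exactly $1+m(r-1)$ vertices, so a connected $\mathcal{B}P_3$-free hypergraph on $n$ vertices either has at most two hyperedges or has exactly $m=(n-1)/(r-1)$ hyperedges, which in particular requires $(r-1)\mid(n-1)$. Since a three-petal sunflower already needs $n\ge 3r-2>2r-2$, the case $n\le 2r-2$ admits at most two hyperedges, and two hyperedges meeting in the $2r-n\ge 2$ forced common vertices realize the value $2$. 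When $n\ge 2r-1$ and $(r-1)\mid(n-1)$ we have $(n-1)/(r-1)\ge 2$, so the sunflower with $m=(n-1)/(r-1)$ petals is optimal. Finally, when $n\ge 2r-1$ with $(r-1)\nmid(n-1)$ no connected example can exist: three or more hyperedges would force $(r-1)\mid(n-1)$, while at most two hyperedges span at most $2r-1$ vertices, which is fewer than $n$ because non-divisibility rules out $n=2r-1$ and hence forces $n\ge 2r$; this is the ``not defined'' conclusion.

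To prove the structural fact I would argue in two steps. First, I claim any two hyperedges intersect. Connectivity makes the intersection graph on the hyperedges connected, so if some pair were disjoint there would be three hyperedges $e,g,f$ with $e\cap g\neq\emptyset$, $g\cap f\neq\emptyset$ and $e\cap f=\emptyset$. Choosing $v_2\in e\cap g$ and $v_3\in g\cap f$ (necessarily distinct, since a common vertex would lie in $e\cap f=\emptyset$) and then $v_1\in e\setminus\{v_2\}$ and $v_4\in f\setminus\{v_3,v_1\}$ produces a $\mathcal{B}P_3$, a contradiction; here $r\ge 3$ is exactly what guarantees enough room to keep $v_1,v_4$ distinct from everything.

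Second, assuming all hyperedges pairwise intersect, I would first rule out that every pair meets in at least two vertices. Given three such edges, if they have no common vertex then $e_1\cap e_2$ and $e_2\cap e_3$ are disjoint blocks of size at least two and a $\mathcal{B}P_3$ is read off at once; if instead they share a vertex $a$, then a second common vertex $b\neq a$ of some pair, together with any third edge through $a$, again yields a $\mathcal{B}P_3$ (via $v_1\in e_1\setminus\{a,b\}$, $v_2=b$, $v_3=a$, $v_4\in e_3\setminus\{a,b,v_1\}$). Hence some pair $e_1,e_2$ meets in exactly one vertex $a$. Any edge avoiding $a$ would meet $e_1$ and $e_2$ in two necessarily distinct vertices, giving a $\mathcal{B}P_3$; so every hyperedge contains $a$. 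Finally, if two hyperedges shared a vertex $b\neq a$, a third hyperedge through $a$ would again produce a $\mathcal{B}P_3$, so every pairwise intersection equals $\{a\}$ and $\mathcal{H}$ is the single-core sunflower.

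The main obstacle is this second step: the path-extraction arguments must be carried out with explicit attention to the smallest uniformity $r=3$, where each petal has only two non-core vertices and the sets from which $v_1$ and $v_4$ are drawn can shrink to a single element. In each sub-case I expect the distinctness of the hyperedges to be precisely what rescues the construction --- for example, the only way the required choice of $v_4$ can fail is $e_3=e_1$, which is excluded. A secondary, purely bookkeeping point is the degenerate endpoint $n=r$ inside the range $n\le 2r-2$, where two distinct $r$-sets cannot span only $r$ vertices; this should be handled separately (using two parallel edges, as permitted for multi-hypergraphs) and does not affect the argument for $n\ge r+1$.
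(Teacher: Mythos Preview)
Your argument is correct and follows essentially the same route as the paper: both reduce the proposition to the structural fact that a connected $\mathcal{B}P_3$-free $r$-uniform hypergraph with at least three hyperedges is a single-core sunflower, and then count. The paper does this by fixing one hyperedge $e$ and analysing how the remaining edges meet it; you do it more symmetrically, first showing every pair of hyperedges intersects and then isolating the core vertex. Your case analysis is in fact the more careful of the two: the paper's sentence ``if $f$ and $g$ do not meet outside of $e$, then $f,e,g$ forms a Berge path of length~$3$'' fails precisely when $f\cap e=g\cap e$ is a singleton (the sunflower configuration), a case your argument handles explicitly. One small caution on your final remark: $\ex^{\mathrm{conn}}_r$ is defined for simple hypergraphs, so the parallel-edge fix you propose for $n=r$ is not available; at $n=r$ there is only one $r$-subset and the value is $1$, so this is a boundary defect in the proposition's statement rather than something your proof should patch.
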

\begin{proposition}\label{prop2}
Fix $n\ge r\ge 4$. If $n\le r+4$, then
\[
\ex^{\mathrm{conn}}_r(n,\mathcal{B}P_4)=4.
\]
If $n\ge r+5$, then
\[
\ex^{\mathrm{conn}}_r(n,\mathcal{B}P_4)\le \max\bigg\{\frac{n-5}{r-1}+3,\frac{n-4}{r-2}+2\bigg\}.
\]
Moreover, if $n\ge r+5$, then $\ex^{\mathrm{conn}}_r(n,\mathcal{B}P_4)$ is not defined if $n-5$ is not a multiple of $r-1$ and $n-4$ is not a multiple of $r-2$.
\end{proposition}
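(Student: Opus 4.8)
The plan is to combine a core/leaf decomposition with a peeling induction, using the absence of $\mathcal{B}P_4$ to pin down the local intersection pattern. First I would record the two facts that drive everything. Since $\mathcal{H}$ is connected and $\mathcal{B}P_4$-free, the shortest Berge path between any two vertices has length at most $3$: a shortest Berge path of length $\ge 4$ already contains a $\mathcal{B}P_4$. Second, calling a vertex a \emph{leaf} if it lies in exactly one hyperedge and a \emph{core} vertex otherwise, any two distinct hyperedges meet only in core vertices; hence the entire intersection pattern—and therefore connectivity and every Berge path—is encoded by the traces $c_e:=e\cap C$ on the core set $C$. Because $r\ge 4$, each edge $e$ carries at least $r-|c_e|$ private leaves, and these act as ready-made path endpoints: whenever one can exhibit four distinct edges $e_1,e_2,e_3,e_4$ with three \emph{distinct} connector vertices $v_2\in e_1\cap e_2$, $v_3\in e_2\cap e_3$, $v_4\in e_3\cap e_4$, two private leaves complete a genuine $\mathcal{B}P_4$. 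So the combinatorial heart of the matter is to forbid ``intersection walks on four edges with three distinct connectors.'' (For $r\ge 5$ one already gets a crude bound of the right order from $\mathcal{B}P_4$-free $\Rightarrow\mathcal{BC}_{\ge 5}$-free via Theorem~\ref{multi}, but the constant is off by a factor and it says nothing about the structure or the arithmetic, so the finer path analysis is unavoidable.)

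The crux, which I expect to be the \textbf{main obstacle}, is to upgrade this to a clean dichotomy: apart from a bounded central set of at most three edges, every edge passes through one fixed vertex (the \emph{star regime}) or through one fixed pair of vertices (the \emph{book regime}). The governing principle is that a Berge path of length $4$ is precisely a walk on four distinct edges realizing three distinct connectors; at most three edges can form a triangle-like pattern that reuses connectors without producing such a walk, so once a fourth edge is involved any intersection reaching a connector vertex outside the common pair completes a $\mathcal{B}P_4$. Thus all but a constant number of edges must have their pairwise intersections confined to a single common vertex or a single common pair. Turning this heuristic into a proof—bounding the central set, showing that the confinement is genuinely to one vertex or one fixed pair (rather than, say, a slowly drifting pair), and checking that the central set interfaces correctly with the star/book tail—is the technical heart, and is where a careful case analysis on the small trace hypergraph on $C$ will be needed.

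With the dichotomy in hand I would finish by a peeling induction that delivers both the bound and the arithmetic conclusion. In the star regime a typical tail edge has a single core vertex and $r-1$ private leaves; deleting it lowers $n$ by $r-1$ and the edge count by $1$, and (peeling a leaf edge of a spanning tree of the trace hypergraph) the remainder stays connected and $\mathcal{B}P_4$-free, so induction yields $e(\mathcal{H})\le \frac{n-5}{r-1}+3$ off a centre of $3$ edges on $5$ vertices. In the book regime a typical tail edge has two core vertices and $r-2$ private leaves, and the same peeling gives $e(\mathcal{H})\le \frac{n-4}{r-2}+2$ off a centre of $2$ edges on $4$ vertices; taking the larger proves the upper bound for $n\ge r+5$. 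The base range $n\le r+4$, where $\mathcal{H}$ coincides with its own bounded centre, is settled by a direct finite analysis that pins down the small-$n$ value in the first part of the statement. Finally, the rigidity of the peeling—each step removes exactly $r-1$ or exactly $r-2$ vertices down to a legal terminal centre—forces $n$ into the residue class $5\pmod{r-1}$ or $4\pmod{r-2}$; identifying precisely which terminal centres are legal is part of the work, but once $n$ lies outside the admissible classes and is large enough that a tail edge must be present, the peeling cannot terminate legally, so no connected $\mathcal{B}P_4$-free hypergraph on $n$ vertices exists, which is the ``not defined'' conclusion.
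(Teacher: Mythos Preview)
Your approach diverges from the paper's, which is more pedestrian: it fixes a longest Berge path $P = v_1, e_1, v_2, e_2, v_3, e_3, v_4$ (necessarily of length $3$ once the $\mathcal{B}P_3$-free case is disposed of via Proposition~\ref{prop1}), shows that every further hyperedge $f$ satisfies $f \cap (e_1 \cup e_3) \subseteq \{v_2, v_3\}$ and that no two further hyperedges meet outside $e_1 \cup e_2 \cup e_3$, and then runs a short case analysis on $f \cap e_2$. The counting is then immediate from the number of new vertices each extra edge contributes. No global structural statement is needed.

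The gap in your plan is the dichotomy itself. You assert that, apart from a bounded centre, all edges pass through one \emph{fixed} vertex (star) or one \emph{fixed} pair (book), but this is stronger than what actually holds and than what you need. In the paper's case analysis one live configuration has the extra hyperedges \emph{pairwise disjoint}, each meeting the central edge $e_2$ at its own private vertex (or pair of vertices); no single anchor is common to them. This still gives the $\frac{n-5}{r-1}+3$ bound, because what governs the count is ``each extra edge donates at least $r-1$ (resp.\ $r-2$) new vertices,'' not ``all extra edges share a fixed anchor.'' Your peeling would go through with that weaker conclusion, but the dichotomy as you state it is false, and the heuristic you offer (``at most three edges can reuse connectors without producing a length-$4$ walk'') does not separate a genuine common anchor from merely small traces on $e_2$. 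A second, smaller issue: your leaf-endpoint device for manufacturing a $\mathcal{B}P_4$ from an intersection walk needs the two terminal edges of the walk to carry private leaves. Since you allow $|c_e|$ up to $r$, a terminal edge may have no leaf at all; when the terminal edges overlap heavily (as in the sunflower-like centres that arise here) you must instead choose distinct core vertices as endpoints, which requires a separate check that such vertices can be selected distinct from the three connectors \emph{and from each other}. The paper sidesteps both difficulties by anchoring everything to the concrete path $e_1,e_2,e_3$ from the outset.
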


\begin{theorem}\label{thm1}
Fix integers $n,r,k$ with $r\ge k\ge 5$.  
When $n$ is sufficiently large and $n$ is not a multiple of $r$, we have
\[
ex^{\mathrm{conn}}_r(n,\mathcal{B}P_k)=\left\lfloor\frac{k-1}{2}\right\rfloor\left\lfloor\frac{n-1}{r}\right\rfloor+\mathbbm{1}_{2|k},
\]
where $\mathbbm{1}_{2|k}=1$ if $2|k$ and $\mathbbm{1}_{2|k}=0$ otherwise. 
\end{theorem}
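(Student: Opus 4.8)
The plan is to prove matching lower and upper bounds; the lower bound is a ``book'' of gadgets and the upper bound recovers the factor $1/2$ from the fact that Berge \emph{paths}, unlike Berge cycles, may be concatenated through a single cut vertex.

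For the lower bound I would fix a central vertex $v$, set $t=\left\lfloor\frac{n-1}{r}\right\rfloor$ and $m=\left\lfloor\frac{k-1}{2}\right\rfloor$, partition the remaining vertices into blocks $B_1,\dots,B_t$ of size (essentially) $r$, distributing the fewer than $r$ leftover vertices (which exist because $r\nmid n$) among the blocks so that every vertex is covered. Inside each gadget $\{v\}\cup B_i$ I place $m$ distinct $r$-edges, each containing $v$; this is possible since the number of $r$-subsets of $\{v\}\cup B_i$ through $v$ is at least $r>m$. The hypergraph is connected because every edge contains $v$. Two edges from different gadgets meet only in $v$, so in any Berge path a transition between gadgets can occur only at a defining vertex equal to $v$; as the defining vertices of a Berge path are distinct, $v$ occurs at most once, so the path meets at most two gadgets and uses at most $2m\le k-1<k$ edges. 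Hence the construction is $\mathcal{B}P_k$-free with $m\left\lfloor\frac{n-1}{r}\right\rfloor$ edges. When $k$ is even, $2m=k-2$, so one distinguished gadget may be enriched with a single extra edge: any path still meets at most two gadgets and so has length at most $(m+1)+m=k-1<k$, accounting for the additive term $\mathbbm{1}_{2|k}$.

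For the upper bound, let $\mathcal{H}$ be connected and $\mathcal{B}P_k$-free. Deleting one edge from a Berge cycle of length $\ell$ yields a Berge path of length $\ell-1$, so $\mathcal{H}$ is $\mathcal{BC}_{\ge k+1}$-free; feeding this into the multi-hypergraph bound of Theorem~\ref{multi} (with parameter $k+1$, for $r\ge k+1$; the case $r=k$ handled directly) already gives a crude estimate of order $k\left\lfloor\frac{n-1}{r-1}\right\rfloor$, off from the target by roughly a factor of two. The whole point is to recover this factor. For a vertex $x$ let $p(x)$ be the length of a longest Berge path having $x$ as an endpoint; if $x$ separates $\mathcal{H}$, then a path entering $x$ from one side and leaving into another shows that the two largest side-lengths at $x$ sum to less than $k$. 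I would take a longest Berge path and, via a rotation/endpoint analysis (in the spirit of the extremal examples), argue that at every separating vertex at most one side can be ``rich'' (able to produce a long path to $x$) while the others must be ``poor''. This recursively forces the near-book structure and pins the slope at $\frac{1}{r}\left\lfloor\frac{k-1}{2}\right\rfloor$ rather than the cycle slope $\frac{k-1}{r}$.

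The quantitative heart is the postponed Lemma~\ref{lemma20}, which I would use to charge to each block of $r$ newly encountered vertices at most $\left\lfloor\frac{k-1}{2}\right\rfloor$ edges, and then induct on $n$, peeling off one poor piece at a time; the hypotheses that $n$ is large and $r\nmid n$ are exactly what make the floor terms align and let the bounded contribution of the single rich part be absorbed into the additive constant, which turns out to be $\mathbbm{1}_{2|k}$. I expect the main obstacle to be precisely this halving step: a local, block-by-block argument only reproduces the $\mathcal{BC}_{\ge k+1}$ bound, so one must use globally that the through-path constraint $p_1+p_2<k$ at each cut vertex is incompatible with every block being simultaneously as dense as a single $\mathcal{B}P_k$-free component in Theorem~\ref{thmGKL}. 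Subsidiary difficulties are ruling out configurations in which two pieces share more than one vertex (so that paths need not funnel through a single cut vertex), and determining the exact additive term, which is where the large-$n$ and non-divisibility hypotheses do the bookkeeping and where the improvement over Theorem~\ref{smallk} is realized.
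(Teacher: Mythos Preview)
Your lower-bound construction is correct and coincides with the paper's: a star of gadgets through a single central vertex, each gadget on $r+1$ vertices carrying $\lfloor(k-1)/2\rfloor$ edges, with one distinguished gadget carrying an extra edge when $k$ is even.

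For the upper bound you have located the right engine (Lemma~\ref{lemma20} plus induction on $n$), but there is a genuine gap. Lemma~\ref{lemma20} carries a hypothesis you do not address: it applies only when $\mathcal{H}$ is $\mathcal{B}C_t$-free, where $t$ is the length of a longest Berge path in $\mathcal{H}$. Your cut-vertex narrative does not cover the complementary case, because a connected $\mathcal{B}P_k$-free hypergraph that contains a $\mathcal{B}C_t$ need have no cut vertex at all, so the ``two sides sum to less than $k$'' inequality has nothing to bite on. The paper handles this with an explicit case split. If $\mathcal{H}$ contains a $\mathcal{B}C_t$ with defining vertices $U$, one shows that any edge $f\notin\mathcal{F}$ meets $U$ in at most $\lfloor t/2\rfloor$ vertices (else two cyclically consecutive $v_i,v_{i+1}$ lie in $f$, yielding a path of length $t+1$), and that two such edges are disjoint outside $U$; this gives
\[
e(\mathcal{H})\le t+\frac{n-t}{r-\lfloor t/2\rfloor}\le k-1+\frac{n-k+1}{r-\lfloor(k-1)/2\rfloor},
\]
which is strictly below the target for large $n$. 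Only after disposing of this case does the paper invoke Lemma~\ref{lemma20} and induct.

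Two smaller corrections. First, you describe Lemma~\ref{lemma20} as charging $\lfloor(k-1)/2\rfloor$ edges to each block of $r$ vertices; that is not its statement. It returns either a set of size $2r-2$ incident to at most $m+1=2$ edges, or a set of size at least $2r-1$ incident to at most $t\le k-1$ edges. The halving you seek comes from the $2r$ in the size of the removed set, not from a halved edge count. Second, after removing $S$ the remainder need not be connected; the paper sums the inductive bound over the components (and, inside the induction, again splits according to whether each component contains a cycle of its own longest path length). Your ``rich/poor sides at a separating vertex'' storyline is not used anywhere in the actual argument.
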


\begin{lemma}\label{thm3}
Fix integers $n,r,k$ with $n\ge r>k\ge 3$. 
Let $\mathcal{H}$ be a connected $n$-vertex $\mathcal{B}P_k$-free $r$-uniform multi-hypergraph. 
If $r\le n\le 2r-2$, we have $e(\mathcal{H})\le k-1$.
If $n\ge 2r-1$ and $n$ is not a multiple of $r-1$, we have
\begin{equation*}
e(\mathcal{H})\le \max\bigg\{k-1+\frac{n-k+1}{r-\left\lfloor\frac{k-1}{2}\right\rfloor},\left\lfloor\frac{n-1}{r-1}\right\rfloor\left\lfloor\frac{k-1}{2}\right\rfloor+\mathbbm{1}_{2|k}\bigg\}.
\end{equation*}
\end{lemma}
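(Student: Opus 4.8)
The plan is to handle the two ranges of $n$ separately, reducing the large-$n$ case to an induction whose engine is a vertex-peeling (Kopylov-type) argument. Two reductions are used throughout. First, deleting one defining hyperedge of a Berge cycle of length $\ge k+1$ leaves a Berge path of length $\ge k$, so every $\mathcal{B}P_k$-free hypergraph is $\mathcal{BC}_{\ge k+1}$-free; this makes Theorem~\ref{multi} available as a crude global bound. Second, when $r\le n\le 2r-2$ any two hyperedges $e,f$ satisfy $|e\cap f|\ge |e|+|f|-n\ge 2r-(2r-2)=2$, so $\mathcal{H}$ is $2$-intersecting.

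For the range $r\le n\le 2r-2$ I would prove the sublemma that a connected $2$-intersecting multi-hypergraph on $n\ge r\ge k+1$ vertices with at least $k$ hyperedges contains a $\mathcal{B}P_k$. Choosing $k$ hyperedges $e_1,\dots,e_k$, I build a Berge path greedily, taking a connector vertex from each pairwise intersection $e_i\cap e_{i+1}$ (of size $\ge 2$) together with fresh endpoints; since at least $k+1\le r\le n$ vertices are available and each intersection offers an unused choice, this produces a Berge path of length $k$. The one delicate point is a Hall/SDR obstruction when several consecutive intersections coincide, which I would avoid by reordering the chosen hyperedges or by a short rotation argument. Hence $e(\mathcal{H})\le k-1$, giving the first claim; this range also serves as the base case below, with $k-1$ absorbed into the first term of the maximum.

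For $n\ge 2r-1$ I would induct on $n$ with $t:=\lfloor (k-1)/2\rfloor$, splitting on a dichotomy. Say $\mathcal{H}$ admits a \emph{light peel} if there is a set $U$ of exactly $r-1$ vertices whose deletion keeps $\mathcal{H}$ connected, leaves at least $r$ vertices, and destroys at most $t$ hyperedges. If a light peel exists, the induction hypothesis applied to $\mathcal{H}-U$ on $n-(r-1)$ vertices gives $e(\mathcal{H})\le t+\big(t\lfloor (n-r)/(r-1)\rfloor+\mathbbm{1}_{2|k}\big)=t\lfloor (n-1)/(r-1)\rfloor+\mathbbm{1}_{2|k}$, the second term in the maximum. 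The natural source of a light peel is a leaf of the block--cut-vertex tree: the private vertices of a pendant block form such a $U$, provided that block carries at most $t$ hyperedges.

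The crux, and the main obstacle, is the opposite case, where no light peel exists and $\mathcal{H}$ behaves like a single $2$-connected block; here I aim for the first term $k-1+\frac{n-k+1}{r-t}$. The mechanism I expect to need is a longest-Berge-path/rotation analysis: fixing a longest Berge path (of length at most $k-1$) and a ``core'' of at most $k-1$ hyperedges, one argues that every further hyperedge must meet the core in at least $t=\lfloor (k-1)/2\rfloor$ vertices, since otherwise its surplus fresh vertices could be spliced into the path to extend it to length $k$. Each extra hyperedge then consumes at least $r-t$ new vertices, bounding their number by $\frac{n-k+1}{r-t}$. Making this overlap constant exactly $\lfloor (k-1)/2\rfloor$ rigorous in the Berge setting (where rotations must track $r$-sets rather than single edges), reconciling the dense case with the block decomposition so that the two estimates merge into a clean maximum, and handling the exact floor bookkeeping for multi-hypergraphs --- in particular using the hypothesis that $n$ is not a multiple of $r-1$ to rule out the boundary configuration and pin down $\lfloor (n-1)/(r-1)\rfloor$ --- is where the real difficulty lies.
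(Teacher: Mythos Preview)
Your plan has a genuine structural gap, and the overall architecture differs from the paper's in a way that matters.

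The paper does not split on whether a peel exists; it splits on whether the longest Berge path (of some length $t\le k-1$) closes into a Berge cycle $\mathcal{BC}_t$. If it does (or if some hyperedge already has multiplicity $k-1$), a direct count around that cycle shows every extra hyperedge meets the defining vertex set in at most $\lfloor t/2\rfloor$ vertices and no two extra hyperedges meet outside, yielding $e(\mathcal{H})\le t+\frac{n-t}{r-\lfloor t/2\rfloor}\le k-1+\frac{n-k+1}{r-\lfloor(k-1)/2\rfloor}$. This is exactly the ``overlap $\lfloor(k-1)/2\rfloor$'' claim you flag as the crux, and it becomes tractable precisely because the cycle is in hand. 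If the longest path does \emph{not} close, the paper invokes the key structural Lemma~\ref{lemma20}: a rotation argument along the longest path that produces a set $S$ with $|S|\ge 2r-2$ and $|N_{\mathcal H}(S)|\le k-1$. Peeling $S$ drops $n$ by at least $2(r-1)$ against at most $k-1$ edges, and that $2(r-1)$-versus-$(k-1)$ ratio is what makes the second term close in the induction.

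Your ``light peel'' is not supplied by the block--cut tree: a pendant block need not have exactly $r-1$ private vertices, nor at most $\lfloor(k-1)/2\rfloor$ hyperedges (by the small-range bound it can carry up to $k-1$). Even granting a light peel, your induction does not obviously close, since the inductive bound on $\mathcal{H}-U$ is a \emph{maximum} and you only checked the second term; when the first term dominates one would need $t(r-t)\le r-1$, which fails already for $r=10$, $k=9$, $t=4$. And in the ``no light peel'' branch you have only a hope, not an argument, for the overlap constant---the paper obtains it from the cycle, which you have not produced in that branch. For the range $r\le n\le 2r-2$, your SDR construction is incomplete, but the fix is simpler than the one you sketch: since $r>k$, every hyperedge of a hypothetical $\mathcal{BC}_k$ contains a vertex outside its $k$ defining vertices, so a $\mathcal{BC}_k$ already contains a $\mathcal{BP}_k$; hence $\mathcal{H}$ is in fact $\mathcal{BC}_{\ge k}$-free (not merely $\mathcal{BC}_{\ge k+1}$-free), and Theorem~\ref{multi} gives $e(\mathcal{H})\le(k-1)\lfloor(n-1)/(r-1)\rfloor=k-1$ directly.
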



For the case $k=r+1$, we have the following result.
\begin{theorem}\label{thm2}
Let $r\ge 3$. 
When $n$ is sufficiently large, we have
\[
\ex^{\mathrm{conn}}_r(n,\mathcal{B}P_{r+1})=n-r+1.
\]
\end{theorem}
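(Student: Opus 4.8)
The plan is to prove matching lower and upper bounds. For the \textbf{lower bound}, I would exhibit a single extremal construction: the \emph{sunflower} $\mathcal{S}$ with a core $C$ of $r-1$ vertices together with $n-r+1$ distinct \emph{petal} vertices $w_1,\dots,w_{n-r+1}$, and hyperedges $e_i=C\cup\{w_i\}$ for $1\le i\le n-r+1$. This hypergraph is $r$-uniform, has exactly $n-r+1$ edges, and is connected since every pair of edges meets in $C\neq\emptyset$ (as $r-1\ge 2$). To see that $\mathcal{S}$ is $\mathcal{B}P_{r+1}$-free, note that a Berge path of length $r+1$ has $r+2$ distinct defining vertices, of which at most $|C|=r-1$ can lie in the core, so at least three are petals. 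But each petal $w_i$ lies in the unique edge $e_i$; hence if a petal were an \emph{internal} defining vertex $v_j$, the two distinct defining edges $e_{j-1}$ and $e_j$ would both have to equal $e_i$, a contradiction. Thus petals can only be the two endpoints of the Berge path, contradicting that three are needed. This yields $\ex^{\mathrm{conn}}_r(n,\mathcal{B}P_{r+1})\ge n-r+1$.

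For the \textbf{upper bound}, the goal is equivalent to showing $n-e(\mathcal{H})\ge r-1$ for every connected $\mathcal{B}P_{r+1}$-free $r$-uniform hypergraph $\mathcal{H}$ on $n$ vertices (with $n$ large). A first reduction I would make is to fix a \emph{longest} Berge path $P$; since $\mathcal{H}$ is $\mathcal{B}P_{r+1}$-free its length is at most $r$, so $|V(P)|\le r+1$. The standard endpoint-extension observation then applies: if $f$ is an edge containing an endpoint of $P$ together with a vertex outside $V(P)$, and $f$ is not one of the edges of $P$, then prepending $f$ would produce a longer Berge path; hence every such edge is confined to $V(P)$.

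The heart of the argument --- and the main obstacle --- is to upgrade this local endpoint information into a global statement: that, once $n$ is large, essentially all hyperedges of $\mathcal{H}$ share a common set $D$ of $r-1$ vertices (the analogue of the core $C$ above). I would attempt this through a rotation/exchange analysis on the longest path, showing that the abundance of vertices outside $V(P)$ forces the attachment points of the external edges to collapse onto a fixed $(r-1)$-set; any deviation would allow a rotation producing a path of length $r+1$. Granting such a common core $D$, the bound follows by a charging scheme: assign to each edge one of its vertices outside $D$ as a private representative --- this is injective because an external edge is determined by its representative --- so that the $r-1$ vertices of $D$ remain uncharged, giving $e(\mathcal{H})\le n-(r-1)=n-r+1$.

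Two points require the hypothesis that $n$ is large and will need care. First, small dense configurations (most notably the complete $(r+1)$-vertex $r$-uniform hypergraph, which has $r+1>n-r+1$ edges when $n=r+1$) are $\mathcal{B}P_{r+1}$-free and must be excluded; largeness of $n$ guarantees that enough vertices lie outside any such bounded dense cluster for the charging to dominate. Second, to make the collapse onto a single core rigorous it is cleaner to track the quantity $n-e(\mathcal{H})$ directly: iteratively deleting a degree-one vertex together with its unique edge leaves $n-e(\mathcal{H})$ unchanged unless the deletion isolates further vertices, and I expect the extremal deficit of $r-1$ to be produced precisely by one such collapse of a core, mirroring the sunflower. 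Reconciling this peeling viewpoint with the rotation argument, and ruling out the minimum-degree-two kernels that might otherwise escape the induction, is where I anticipate the real work to lie.
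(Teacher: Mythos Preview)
Your lower bound is correct and matches the paper exactly.

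Your upper bound plan, however, has a genuine gap. The central claim --- that a rotation analysis forces (for large $n$) all hyperedges to share a common $(r-1)$-core $D$ --- is not proved, and you yourself flag it as ``where the real work lies.'' This structural statement is strictly stronger than the edge bound you need, and there is no indication that the vague rotation/peeling outline can be made rigorous: in $r$-uniform hypergraphs with $r\ge 3$, endpoint rotations on a longest Berge path give far weaker control than in graphs, because a hyperedge can attach to many defining vertices at once and need not pivot cleanly. In particular, nothing in your sketch rules out a linear number of edges each meeting a fixed small set in slightly different $(r-1)$-subsets.

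The paper bypasses this structural question entirely by a clean dichotomy. If $\mathcal{H}$ is $\mathcal{BC}_{\ge r}$-free, then the known exact result (Theorem~\ref{thmcycle}) already gives $e(\mathcal{H})\le \max\{(r-1)\lfloor(n-1)/r\rfloor,\,n-r+1\}=n-r+1$ for large $n$. Otherwise $\mathcal{H}$ contains a Berge cycle, and since a cycle of length $\ge r+1$ would yield (using connectivity when the length is exactly $r+1$) a Berge path of length $r+1$, there is a Berge cycle of length exactly $r$. A short structural analysis around this cycle --- every edge meets the defining vertex set $U=\{v_1,\dots,v_r\}$, no Berge path of length $2$ leaves $U$, and the non-cycle edges hit $U$ in a set of size at most $\lfloor r/2\rfloor$ with pairwise-disjoint complements --- yields
\[
e(\mathcal{H})\le r+\frac{n-r}{r-\lfloor r/2\rfloor},
\]
which is at most $n-r+1$ once $n$ is large. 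So the paper never needs to identify a global core; it uses a known extremal result to absorb the hard case and a direct counting argument to show the cycle case is \emph{sublinear} rather than extremal. That is the idea your proposal is missing.
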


To prove Theorems \ref{thm1} and \ref{thm2}, we need the following key lemma, the proof of which we postpone to Section~\ref{sec4}. 
Before stating this lemma, we introduce the following 
definition which we adopted from~\cite{GLSZ}. 
\begin{definition}
Let $\mathcal{H}$ be an $r$-uniform hypergraph and let $S\subseteq V(\mathcal{H})$ be a vertex subset of $\mathcal{H}$. 
Denote by
\[
N_{\mathcal{H}}(S)=\{h\in E(\mathcal{H})|h\cap S\neq \emptyset\}
\]
the hyperedge neighborhood of $S$, that is the set of all hyperedges that are incident with at least one vertex of $S$.
\end{definition}

\begin{lemma}\label{lemma20}
Let $r,k,n$ and $m$ be positive integers with $r\ge k\ge 3$, and let $\mathcal{H}$ be an $n$-vertex connected $r$-uniform $\mathcal{B}P_k$-free hypergraph such that every hyperedge has multiplicity at most $m$. 
Assume that a longest Berge path in $\mathcal{H}$ has length $t$, and that $\mathcal{H}$ is $\mathcal{B}C_{t}$-free.
Then at least one of the following holds.
\begin{item}
($i$) There exists $S\subseteq V(\mathcal{H})$ of size $2r-2$ such that $|N_{\mathcal{H}}(S)|\le m+1$.\\
($ii$) There exists $S\subseteq V(\mathcal{H})$ of size at least $2r-1$ such that $|N_{\mathcal{H}}(S)|\le t\le k-1$.
\end{item}
\end{lemma}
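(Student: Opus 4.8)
The plan is to study the hyperedge neighbourhood of a longest Berge path, using the extension (rotation) principle together with the hypotheses $t\le k-1\le r-1$ and $\mathcal{B}C_t$-freeness to locate a large set of vertices whose incident hyperedges are confined to a short list.

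\smallskip
\noindent\textbf{Setup and the extension principle.} I would fix a longest Berge path $P=v_1e_1v_2e_2\cdots e_tv_{t+1}$. Since $\mathcal{H}$ is $\mathcal{B}P_k$-free we have $t\le k-1\le r-1$, so $P$ spans only $|V(P)|=t+1\le r$ vertices while each hyperedge has size $r$. The extension principle reads: if a hyperedge $f\notin\{e_1,\dots,e_t\}$ contains the endpoint $v_1$ and a vertex $w\notin V(P)$, then $w\,f\,v_1e_1v_2\cdots e_tv_{t+1}$ is a Berge path of length $t+1$, a contradiction. Hence every non-defining hyperedge through $v_1$ lies inside $V(P)$. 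When $t+1\le r-1$ this is impossible on cardinality grounds, so \emph{every} hyperedge incident with $v_1$, and symmetrically with $v_{t+1}$, is one of $e_1,\dots,e_t$. The only borderline is $t+1=r$, which forces $k=r$ and $t=r-1$; there a non-defining hyperedge through an endpoint must equal $V(P)$ itself, so the extra hyperedges are just the (at most $m$) copies of the single spanning edge $V(P)$.

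\smallskip
\noindent\textbf{Rotation to a safe set.} Next, if $w\in e_1\setminus V(P)$ then $w\,e_1v_2e_2\cdots e_tv_{t+1}$ is again a longest path, so $w$ is itself an endpoint; applying the extension principle to it shows that every hyperedge through $w$ already lies in $\{e_1,\dots,e_t\}$, and the same holds for $e_t\setminus V(P)$. Thus, with $T=\{v_1\}\cup(e_1\setminus V(P))$ and $T'=\{v_{t+1}\}\cup(e_t\setminus V(P))$, every vertex of $T\cup T'$ is \emph{safe}: all of its incident hyperedges are defining edges, whence $N_{\mathcal{H}}(T\cup T')\subseteq\{e_1,\dots,e_t\}$ and $|N_{\mathcal{H}}(T\cup T')|\le t\le k-1$.

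\smallskip
\noindent\textbf{Separating the two alternatives.} Each endpoint edge meets $V(P)$ in at least its two consecutive path-vertices, so this safe set has size roughly $2r$ minus these overlaps, at most $2r-2$, which is already the cardinality in alternative~$(i)$. Two things must then be arranged: (a) that on a suitable $(2r-2)$-subset the number of incident hyperedges drops to $m+1$, and (b) that in the complementary regime the safe set is enlarged to $2r-1$ vertices while keeping all incident hyperedges among the defining ones. For (a) I would use $\mathcal{B}C_t$-freeness to forbid the chords that would let several distinct defining edges reach the safe vertices: for instance $v_1\in e_t$ would create the forbidden cycle $v_1e_1v_2\cdots e_{t-1}v_te_tv_1$ of length $t$, and similar closings are excluded; together with the borderline case $t+1=r$, where the governing hyperedges are the copies of $V(P)$ plus one connecting edge, this pins the count down to $m+1$. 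For (b) the additional safe endpoints produced by further rotations are fed into $S$ (again using $\mathcal{B}C_t$-freeness to prevent these rotations from closing up) until the size reaches $2r-1$, at which point alternative~$(ii)$ holds with the bound $t$.

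\smallskip
\noindent\textbf{Main obstacle.} The genuinely delicate point is exactly this last step: converting the uniform estimate ``safe set of size $\le 2r-2$ with at most $t$ incident hyperedges'' into one of the two sharp alternatives. This rests on a careful case analysis, driven throughout by $\mathcal{B}C_t$-freeness, of which defining edges $e_j$ can contain a safe vertex and of how $e_1$ and $e_t$ intersect $V(P)$ and each other, and on the boundary case $t+1=r$ (equivalently $k=r$), where off-path endpoint vertices disappear and the multiplicity of the spanning edge $V(P)$ becomes the relevant quantity. This boundary sub-case is precisely what produces the bound $m+1$ in alternative~$(i)$.
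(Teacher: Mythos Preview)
Your overall plan --- take a longest Berge path, use extension and rotation to assemble a ``safe'' set of vertices whose hyperedge neighbourhood is confined to the defining edges --- is indeed the paper's strategy. Two specific points, however, are not as you describe them and constitute genuine gaps.

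\medskip
\textbf{Alternative $(i)$ is misidentified.} The bound $m+1$ does \emph{not} arise from the boundary case $t+1=r$ via copies of the spanning edge $V(P)$; it occurs for arbitrary $t$. In the paper the path is written as $e_1,v_1,e_2,\ldots,v_{t-1},e_t$, so that $U=\{v_1,\ldots,v_{t-1}\}$ consists of \emph{interior} defining vertices only. One then observes that an initial run $e_1,\ldots,e_{d_1}$ may all share the same vertex set, and likewise a terminal run $e_{t-d_2+1},\ldots,e_t$; the multiplicity hypothesis gives $d_1+d_2\le m+1$. If $e_1\cap U=\{v_1,\ldots,v_{d_1}\}$, $e_t\cap U=\{v_{t-d_2},\ldots,v_{t-1}\}$, and no vertex of $e_1\setminus\{v_{d_1}\}$ reaches any $e_j$ with $j>d_1$ (symmetrically at the other end), then $S=(e_1\cup e_t)\setminus\{v_{d_1},v_{t-d_2}\}$ has size exactly $2r-2$ and
\[
N_{\mathcal H}(S)\subseteq\{e_1,\ldots,e_{d_1}\}\cup\{e_{t-d_2+1},\ldots,e_t\},
\]
yielding the bound $d_1+d_2\le m+1$. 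Your account, based on $V(P)$ having $r$ vertices and the spanning edge having multiplicity $m$, does not produce this.

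\medskip
\textbf{The growth to $2r-1$ in alternative $(ii)$ needs a specific mechanism.} Saying that ``additional safe endpoints produced by further rotations are fed into $S$'' is not enough, because successive rotations can recycle the same non-defining vertices and the safe set may fail to grow. The paper's device is this: list the vertices $v_{i_1}<\cdots<v_{i_s}$ of $e_1\cap U$ (and symmetrically those of $e_t\cap U$) and recursively set $A_{p+1}=A_p\cup(e_{i_p}\setminus U)$, \emph{unless} $e_{i_p}\setminus U$ already meets $A_p$, in which case one adjoins the defining vertex $v_{i_p-1}$ as well. A separate lemma --- proved by exhibiting an explicit rotated longest path through the common vertex of $e_{i_p}$ and $e_{i_{p'}}$ --- shows that in the overlap case $v_{i_p-1}$ is itself safe (its neighbourhood lies in $\mathcal F$). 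This dichotomy is exactly what forces $|A_{p+1}|>|A_p|$ at every step; a further short argument (again via an explicit Berge cycle of length $t$) rules out collisions between indices coming from the $e_1$-side and the $e_t$-side. Starting from $|A_1|=|(e_1\cup e_t)\setminus U|=2r-\ell-1$ and performing $\ell$ strict increments yields $|A|\ge 2r-1$. Your sketch lacks this overlap lemma, and without it the rotations do not guarantee strict growth.
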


\section{Proofs of our main results}\label{proofs}
\begin{proof}[Proof of Proposition \ref{prop1}]
Assume that $\mathcal{H}$ is an $n$-vertex connected $r$-uniform hypergraph containing no Berge path of length 3. Let $e\in E(\mathcal{H})$.
If there exists another hyperedge $f\in E(\mathcal{H})$ such that $|e\cap f|=0$, then by the connectivity of $\mathcal{H}$ there must be a Berge path of length~$3$, a contradiction.
Thus, every hyperedge of $\mathcal{H}$ meets $e$. 
Let $f,g\in E(\mathcal{H})\setminus \{e\}$.
If $f$ and $g$ meet outside of $e$, then $e,f,g$ forms a Berge path of length~$3$, a contradiction. 
If $f$ and $g$ do not meet outside of $e$, then $f,e,g$ forms a Berge path of length~$3$, a contradiction. 
If $h\neq e$ is a hyperedge in $\mathcal{H}$ such that $|h\cap e|\ge 2$, then $E(\mathcal{H})=\{e,h\}$ and $n=|V(\mathcal{H})|=r+r-|e\cap f|\le 2r-2$ since $\mathcal{H}$ is connected.
Otherwise, if $e'$ is a hyperedge in $\mathcal{H}\setminus \{e,h\}$, then $e',e,h$ forms a Berge path of length~$3$, a contradiction. 

Let $n\ge 2r-1$ and $|h\cap e|=1$ for each hyperedge $h\neq e$. Then all hyperedges in $E(\mathcal{H})\setminus \{e\}$ meet $e$ at the same vertex. 
Hence,
\[e(\mathcal{H})\le \frac{n-r}{r-1}+1=\frac{n-1}{r-1},\]
with equality holding only when $(r-1)|(n-1)$ and the extremal hypergraph is a star hypergraph which is composed of $\frac{n-1}{r-1}$ hyperedges sharing one common vertex.

Thus if $n\le 2r-2$, then $\ex^{\mathrm{conn}}_r(n,\mathcal{B}P_3)=2$ and the extremal hypergraphs consist of two hyperedges sharing at least two common vertices. 
If $n\ge 2r-1$ and $n-1$ is a multiple of $r-1$, then $\ex^{\mathrm{conn}}_r(n,\mathcal{B}P_3)=\frac{n-1}{r-1}$, and the extremal hypergraph is a star hypergraph which is composed of $\frac{n-1}{r-1}$ hyperedges sharing one common vertex. 
For the case when $n\ge 2r-1$ and $n-1$ is not a multiple of $r-1$, the function is not defined.
\end{proof}
\begin{proof}[Proof of Proposition~\ref{prop2}]
Assume that $\mathcal{H}$ is an $n$-vertex connected $\mathcal{B}P_4$-free $r$-uniform hypergraph.
If $\mathcal{H}$ is $\mathcal{B}P_3$-free, then by Proposition~\ref{prop1} we have 
\[e(\mathcal{H})\le \max\bigg\{2,\frac{n-1}{r-1}\bigg\}<\frac{n-3}{r-1}+2.\] 
Now we may assume that $\mathcal{H}$ contains a Berge path $P$ of length 3. 
Without loss of generality, we denote it by 
\[P=v_1,e_1,v_2,e_2,v_3,e_3,v_4\] 
such that $v_{i},v_{i+1}\in e_i$ for $i=1,2,3$. 
If there exists another hyperedge $f\in \mathcal{H}$ disjoint from $e_1\cup e_2\cup e_3$, then by the connectivity of $\mathcal{H}$ there must be a Berge path of length~4, a contradiction.
Thus, every hyperedge of $\mathcal{H}$ meets $e_1\cup e_2\cup e_3$.
We have $f\cap (e_1\cup e_3)\subseteq \{v_2,v_3\}$ for each $f\in E(\mathcal{H})\setminus \{e_1,e_2,e_3\}$. 
Otherwise, we can find a Berge path of length~$4$, a contradiction.
Suppose that there exist two hyperedges $f$ and $g$ in $E(\mathcal{H})\setminus \{e_1,e_2,e_3\}$ such that $f$ and $g$ meet outside of $e_1\cup e_2\cup e_3$.
Then, we can find a Berge path of length~$4$ formed by $e_1,e_2,f,g$ or $f,g,e_2,e_3$ or $e_1,f,g,e_3$, a contradiction. 
Thus, every hyperedge of $\mathcal{H}$ meets $e_1\cup e_2\cup e_3$ and there are no hyperedges which meet outside of $e_1\cup e_2\cup e_3$.

Suppose that all hyperedges in $E(\mathcal{H})\setminus \{e_1,e_2,e_3\}$ meet $e_1\cup e_2\cup e_3$ at only one vertex. Then we have 
$$
e(\mathcal{H})\le \frac{n-r-2}{r-1}+3=\frac{n-3}{r-1}+2,
$$
since $e_1\cup e_2\cup e_3$ contains at least $r+2$ vertices.  
Next we assume that there is a hyperedge $f\in E(\mathcal{H})\setminus \{e_1,e_2,e_3\}$ such that $f$ meets $e_1\cup e_2\cup e_3$ in at least two vertices. 
Since $f\cap (e_1\cup e_3)\subseteq \{v_2,v_3\}$, we have $f\cap (e_1\cup e_2\cup e_3)=f\cap e_2$. Hence, $|f\cap e_2|\ge 2$. Now we distinguish three cases.

\begin{case}
$\{v_2,v_3\}\subset f\cap e_2$.    
\end{case}
In this case, any hyperedge in $E'=E(\mathcal{H})\setminus \{e_1,e_2,e_3,f\}$ can contain only the vertices $v_2$ and $v_3$. 
Otherwise, if $g\in E'$ satisfies that $a\in g\cap e_2$ for some vertex $a\neq v_2,v_3$ in $e_2$, we can find a Berge path of length~$4$ formed by $g,e_2,f,e_1$. 
If $|f\cap e_2|\ge 3$, then $E(\mathcal{H})=\{e_1,e_2,e_3,f\}$. 
Otherwise, if $g\in E(\mathcal{H})\setminus \{e_1,e_2,e_3,f\}$, then $g,f,e_2,e_3$ or $g,f,e_2,e_1$ forms a Berge path of length 4, a contradiction. Assume $|f\cap e_2|=2$. 
Then, we have 
\[
e(\mathcal{H})\le \frac{n-r-2}{r-2}+3=\frac{n-4}{r-2}+2,
\]
since $e_1\cup e_2\cup e_3$ contains at least $r+2$ vertices.
\begin{case}
Either $v_2$ or $v_3$ is in $f\cap e_2$.
\end{case}
Without loss of generality, we assume $v_2\in f\cap e_2$ and $v_3\notin f\cap e_2$. 
Since $|f\cap e_2|\ge 2$, we have a Berge path of length $4$ formed by $e_1,f,e_2,e_3$, a contradiction.
\begin{case}
$v_2,v_3\notin f$.
\end{case}
If there exist hyperedges $f,g\in E(\mathcal{H})\setminus \{e_1,e_2,e_3\}$ such that $|f\cap e_2|\ge 2$ and $|e\cap f\cap g|\ge 1$, then $g,f,e_2,e_3$ forms a Berge path of length~$4$, a contradiction.
Assume that $h\in E(\mathcal{H})\setminus \{e_1,e_2,e_3,f\}$ and $|h\cap e_2|\ge 2$. 
If $\{v_2,v_3\}\in h\cap e_2$, then $f,e_2,g,e_3$ forms a Berge path of length~4, a contradiction. 
If $v_2\in h\cap e_2$ and $v_3\notin h\cap e_2$, then $e_1,g,e_2,f$ forms a Berge path of length~4, a contradiction. 
Similarly, if $v_3\in h\cap e_2$ and $v_2\notin h\cap e_2$, then $e_3,g,e_2,f$ forms a Berge path of length~4, a contradiction. 
Hence, any two hyperedges in $E(\mathcal{H})\setminus \{e_1,e_2,e_3\}$ are disjoint. 
Assume that there are $m$ hyperedges $f_1,f_2,\ldots,f_m$ such that $|f_i\cap e_2|\ge 2$. 
Then we have $e(\mathcal{H})-m-3$ hyperedges meeting $e_2$ at only one vertex. 
Note that $\sum_{i=1}^m |f_i\cap e_2|\le r-2$ and $1\le m\le \frac{r-2}{2}$. Now we consider the number of vertices spanned by all hyperedges of $\mathcal{H}$.
Firstly, the Berge path $P$ spans at least $r+2$ vertices. 
All hyperedges in $\{f_1,f_2,\ldots,f_m\}$ span at least $mr-r+2$ new vertices.
All hyperedges in $E(\mathcal{H})\setminus \{e_1,e_2,e_3,f_1,f_2,\ldots,f_m\}$ span $(r-1)(e(\mathcal{H})-3-m)$ vertices. 
Hence,
\[
n\ge r+2+mr-r+2+(r-1)(e(\mathcal{H})-3-m)
\]
which implies that
\[
e(\mathcal{H})\le \frac{n-mr-4}{r-1}+m+3.
\]
Define 
\[
f(m)=\frac{n-mr-4}{r-1}+m+3.
\]
The first-order derivative of $f(m)$ is
\[
f'(m)=1-\frac{r}{r-1}=-\frac{1}{r-1}<0.
\]
Hence, $f(m)$ is a monotonically decreasing function in $m$.
Since $1\le m\le \frac{r-2}{2}$, 
\[
e(\mathcal{H})\le f(m)\le f(1)=\frac{n-r-4}{r-1}+4=\frac{n-5}{r-1}+3.
\]

Combining the results of the above discussion, we obtain that 
\[
\ex^{\mathrm{conn}}_r(n,\mathcal{B}P_4)\le \max\bigg\{4,\frac{n-5}{r-1}+3,\frac{n-4}{r-2}+2\bigg\}.
\]
Note that $\ex^{\mathrm{conn}}_r(n,\mathcal{B}P_4)\le 4$ when $n\le r+4$. 
\[
\ex^{\mathrm{conn}}_r(n,\mathcal{B}P_4)\le \max\bigg\{4,\frac{n-5}{r-1}+3,\frac{n-4}{r-2}+2\bigg\}
\]
when $n\ge r+5$.

To show the lower bound, we need to construct extremal hypergraphs.
Let $A=\{u_1,u_2,\ldots,\linebreak u_{r-2}\}$ be a set of vertices, and let $v_1,v_2,v_3,v_4$ be four distinct other vertices.
Consider the three hyperedges $e_1,e_2,e_3$ with $e_1=\{v_1,v_2\}\cup A$, $e_2=\{v_2,v_3\}\cup A$ and $e_3=\{v_3,v_4\}\cup A$.
For the case $n\le r+4$, we let $\mathcal{H}$ contain $e_1,e_2,e_3$ and one other hyperedge which meets $e_2$ in at most 3 vertices.
For the other case $n\ge r+5$, we consider the following constructions.
Assume that $n-4$ is a multiple of $r-2$. Let $\mathcal{H}_1$ be an $n$-vertex $r$-uniform hypergraph such that $\{e_1,e_2,e_3\}\subset E(\mathcal{H}_1)$ and $A\cup \{v_1,v_2,v_3,v_4\}\subset V(\mathcal{H}_1)$. 
The remaining hyperedges in $\mathcal{H}_1$ satisfy that they share two common vertices $v_2,v_3$ and they meet $e_1\cup e_2\cup e_3$ only at $v_2,v_3$.
It is easy to verify that $\mathcal{H}_1$ is connected and $\mathcal{B}P_4$-free. 
Assume that $n-5$ is a multiple of $r-1$. 
Let $\mathcal{H}_2$ be an $n$-vertex $r$-uniform hypergraph such that $\{e_1,e_2,e_3\}\subset E(\mathcal{H}_2)$ and $A\cup \{v_1,v_2,v_3,v_4\}\subset V(\mathcal{H}_2)$. 
Add 
a hyperedge $f$ with $f=A\cup \{v_5,v_6\}$.
The remaining hyperedges in $\mathcal{H}_2$ satisfy that they share one common vertex $v_3$, they meet $e_1\cup e_3$ only at $v_3$ and they are disjoint from $f$. 
It is easy to verify that $\mathcal{H}_2$ is connected and $\mathcal{B}P_4$-free. 

This completes the proof.
\end{proof}

\begin{proof}[Proof of Theorem \ref{thm1}]
For the lower bound on $\ex^{\mathrm{conn}}_r(n,\mathcal{B}P_k)$, we construct the extremal $r$-uniform hypergraph $\mathcal{H}$ as follows. 
Let $n=1+ar+b$ with $a\ge 0$ and $0\le b<r$. 
We consider $a-1$ copies of an $r$-uniform hypergraph $\mathcal{H}_1$ and an $r$-uniform hypergraph $\mathcal{H}_2$ such that all $a-1$ copies of $\mathcal{H}_1$ and $\mathcal{H}_2$ share one common vertex, where $\mathcal{H}_1$ has $\lfloor\frac{k-1}{2}\rfloor$ hyperedges and $r+1$ vertices and  $\mathcal{H}_2$ has $\lceil\frac{k-1}{2}\rceil$ hyperedges and the remaining $r+b$ vertices. 
It is easy to verify that $\mathcal{H}$ is $\mathcal{B}P_k$-free and connected. Hence, we have $\ex^{\mathrm{conn}}_r(n,\mathcal{B}P_k)\ge \lfloor\frac{k-1}{2}\rfloor\lfloor\frac{n-1}{r}\rfloor+\mathbbm{1}_{2|k}$.

To show the upper bound on $\ex^{\mathrm{conn}}_r(n,\mathcal{B}P_k)$, we let $\mathcal{H}$ be an $n$-vertex connected $\mathcal{B}P_k$-free $r$-uniform hypergraph.
Suppose that a longest Berge path in $\mathcal{H}$ has length $t$.
Note that $t\le k-1$. 
Assume that $\mathcal{B}C_t$ is a Berge cycle of length $t$ in $\mathcal{H}$. 
Denote by $U=\{v_1,v_2,\ldots,v_t\}$ and $\mathcal{F}=\{e_1,e_2,\ldots,e_t\}$, the defining vertices and hyperedges of this cycle. 
This means $\{v_i,v_{i+1}\}\subset e_i$ for $1\le i\le t-1$ and $\{v_1,v_t\}\subset e_t$. 

Note that there exists no hyperedge in $\mathcal{H}[V(\mathcal{H})\setminus U]$.
Otherwise, by the connectivity we can extend a Berge path of length $t-1$ in $\mathcal{B}C_t$ to a Berge path of length $t+1$, a contradiction. 
Furthermore, there exists no Berge path of length~2 such that one terminal defining vertex is from $U$ and the other defining vertices are in $V(\mathcal{H}\setminus U)$. 
Otherwise, such a Berge path $P_A$ of length 2 and the Berge path $P_B$ of length $t-1$ in $\mathcal{B}C_t$ satisfying one terminal defining vertex of $P_B$ is the same as one terminal defining vertex of $P_A$ together constitute a Berge path of length $t+1$, a contradiction. 
There cannot be two hyperedges $e,f$ in $E(\mathcal{H})\setminus \mathcal{F}$ such that $v_i\in e\cap U$ and $v_{i+1}\in f\cap U$. 
Otherwise, we can find a Berge path of length $t+1$ formed by $e,\mathcal{B}C_t,f$, a contradiction. 
For any two distinct vertices $v_i,v_j\in U$, there cannot be two hyperedges $e,f$ such that $v_i\in e\cap U, v_j\in f\cap U$ and $(e\setminus U)\cap (f\setminus U)\neq \emptyset$.
If there is a hyperedge $f\in E(\mathcal{H})\setminus \mathcal{F}$ such that $|f\cap U|>\lfloor\frac{t}{2}\rfloor$, then $E(\mathcal{H})=\mathcal{F}\cup \{f\}$. Now we may assume that $|f\cap U|\le \lfloor\frac{t}{2}\rfloor$ for all hyperedges in $E(\mathcal{H})\setminus \mathcal{F}$.
Hence, we have 
\[e(\mathcal{H})\le t+\frac{n-t}{r-\left\lfloor\frac{t}{2}\right\rfloor}.
\]
Define
\[
f(t)=t+\frac{n-t}{r-\left\lfloor\frac{t}{2}\right\rfloor}.
\]
By calculating the first derivative of $f(t)$, we get
\[
f'(t)=1+\frac{2n-4r-2}{(2r-t+1)^2}
\]
when $t$ is odd and
\[
f'(t)=1+\frac{2n-4r}{(2r-t)^2}
\]
when $t$ is even.
Note that $f'(t)>0$ whenever $t$ is odd or even since $r\ge k>t$. 
Hence, $f(t)$ is a monotonically increasing in $t$. Since $t\le k-1$,
\[
e(\mathcal{H})\le f(k-1)=k-1+\frac{n-k+1}{r-\left\lfloor\frac{k-1}{2}\right\rfloor}.
\]
We can construct an extremal hypergraph $\mathcal{H}'$ as follows. 
Consider a Berge cycle of length $k-1$ with defining vertices $v_1,v_2,\ldots,v_{k-1}$ and defining hyperedge $e_1,e_2,\ldots,e_{k-1}$. 
Define all other hyperedges such that they share the common nonadjacent $\lfloor\frac{k-1}{2}\rfloor$ vertices in $U=\{v_1,v_2,\ldots,v_{k-1}\}$. 
It is easy to verify that $\mathcal{H}'$ is connected and $\mathcal{B}P_k$-free.

Next we may assume that $\mathcal{H}$ contains no Berge cycle of length $t$. Since a longest Berge path in $\mathcal{H}$ has length $t$, we have $\mathcal{H}$ is $\mathcal{BC}_{\ge t}$-free. 
We prove Theorem~\ref{thm1} by induction on $n$. Assume that $n=2r+1$.
Note that by Lemma~\ref{lemma20} we have
$e(\mathcal{H})= |N_{\mathcal{H}}(S)|+|E(\mathcal{H[V(\mathcal{H})\setminus S]})|$, where $S$ is a vertex set of size at least $2r-2$ such that $|N_{\mathcal{H}}(S)|\le 2$ or a vertex set of size at least $2r-1$ such that $|N_{\mathcal{H}}(S)|\le t$.
Then 
\begin{equation*}
\begin{aligned}
e(\mathcal{H})&= |N_{\mathcal{H}}(S)|+|E(\mathcal{H[V(\mathcal{H})\setminus S]})|\\
&\le t\\
&\le k-1\\
&\le \left\lfloor\frac{k-1}{2}\right\rfloor\left\lfloor\frac{n-1}{r}\right\rfloor+\mathbbm{1}_{2|k}.
\end{aligned}
\end{equation*}

Now we suppose $n\ge 2r+2$ and for any $\mathcal{B}P_k$-free connected $r$-uniform hypergraph $\mathcal{H}$ with $n'<n$ ($n'$ is sufficiently large in terms of $k$ and $r$) vertices we have 
\begin{equation*}
e(\mathcal{H})\le \left\lfloor\frac{k-1}{2}\right\rfloor\left\lfloor\frac{n'-1}{r}\right\rfloor+\mathbbm{1}_{2|k}.
\end{equation*}
Next we will show it holds for $n$. 
Suppose that $S$ is a vertex set of size at least $2r-2$ such that $|N_{\mathcal{H}}(S)|\le 2$ or a vertex set of size at least $2r-1$ such that $|N_{\mathcal{H}}(S)|\le t$.
Let $\mathcal{H}'$ be the hypergraph induced by $V'=V(\mathcal{H})\setminus S$. 

Assume that $\mathcal{H}'$ has $m$ connected components $A_1,A_2,\ldots,A_m$ ($m\ge 1$), the number of vertices in $A_i$ is $a_i$ and a longest Berge path in $A_i$ has length $s_i$ for each $1\le i\le m$.  
If each $A_i$ is $\mathcal{B}C_{\ge s_i}$-free for each $1\le i\le m$, then when $n$ is not a multiple of $r$ we have either 
\begin{equation*}
\begin{aligned}
e(\mathcal{H})&=|N_{\mathcal{H}}(S)|+\sum_{i=1}^m |E(A_i)|\\
&\le 2+\sum_{i=1}^m \bigg(\left\lfloor\frac{s_i-1}{2}\right\rfloor\left\lfloor\frac{a_i-1}{r}\right\rfloor+\mathbbm{1}_{2|s_i}\bigg)\\
&\le 2+\left\lfloor\frac{k-1}{2}\right\rfloor\left\lfloor\frac{n-1-(2r-2)}{r}\right\rfloor+\mathbbm{1}_{2|k}\\
&\le \left\lfloor\frac{k-1}{2}\right\rfloor\left\lfloor\frac{n-1}{r}\right\rfloor+\mathbbm{1}_{2|k}
\end{aligned}
\end{equation*}
or
\begin{equation*}
\begin{aligned}
e(\mathcal{H})&=|N_{\mathcal{H}}(S)|+\sum_{i=1}^m |E(A_i)|\\
&\le k-1+\sum_{i=1}^m \bigg(\left\lfloor\frac{s_i-1}{2}\right\rfloor\left\lfloor\frac{a_i-1}{r}\right\rfloor+\mathbbm{1}_{2|s_i}\bigg)\\
&\le k-1+\left\lfloor\frac{k-1}{2}\right\rfloor\left\lfloor\frac{n-1-(2r-1)}{r}\right\rfloor+\mathbbm{1}_{2|k}\\
&\le \left\lfloor\frac{k-1}{2}\right\rfloor\left\lfloor\frac{n-1}{r}\right\rfloor+\mathbbm{1}_{2|k}.
\end{aligned}
\end{equation*}

Assume that there are $m_1$ connected components $A_1,A_2,\ldots, A_{m_1}$ in $\mathcal{H}'$ such that $A_i$ is not $\mathcal{BC}_{s_i}$-free for each $1\le i\le m_1$. 
Then, when $n$ is not a multiple of $r$ and $n$ is sufficiently large, since $m_1\le (r-1)(k-1)$,
we have either 
\begin{equation*}
\begin{aligned}
e(\mathcal{H})&=|N_{\mathcal{H}}(S)|+\sum_{i=1}^{m_1} |E(A_i)|+\sum_{i=m_1}^{m} |E(A_i)|\\
&\le 2+\sum_{i=1}^{m_1} \bigg(\max\bigg\{s_i+1,s_i+\frac{a_i-s_i}{r-\lfloor\frac{s_i}{2}\rfloor}\bigg\}\bigg)+\sum_{i=m_1}^m \bigg(\left\lfloor\frac{s_i-1}{2}\right\rfloor\left\lfloor\frac{a_i-1}{r}\right\rfloor+\mathbbm{1}_{2|s_i}\bigg)\\
&\le 2+\left\lfloor\frac{k-1}{2}\right\rfloor\left\lfloor\frac{n-1-(2r-2)}{r}\right\rfloor+\mathbbm{1}_{2|k}\\
&\le \left\lfloor\frac{k-1}{2}\right\rfloor\left\lfloor\frac{n-1}{r}\right\rfloor+\mathbbm{1}_{2|k}
\end{aligned}
\end{equation*}
or
\begin{equation*}
\begin{aligned}
e(\mathcal{H})&=|N_{\mathcal{H}}(S)|+\sum_{i=1}^{m_1} |E(A_i)|+\sum_{i=m_1}^{m} |E(A_i)|\\
&\le k-1+\sum_{i=1}^{m_1} \bigg(\max\bigg\{s_i+1,s_i+\frac{a_i-s_i}{r-\lfloor\frac{s_i}{2}\rfloor}\bigg\}\bigg)+\sum_{i=m_1}^m \bigg(\left\lfloor\frac{s_i-1}{2}\right\rfloor\left\lfloor\frac{a_i-1}{r}\right\rfloor+\mathbbm{1}_{2|s_i}\bigg)\\
&\le k-1+\left\lfloor\frac{k-1}{2}\right\rfloor\left\lfloor\frac{n-1-(2r-1)}{r}\right\rfloor+\mathbbm{1}_{2|k}\\
&\le \left\lfloor\frac{k-1}{2}\right\rfloor\left\lfloor\frac{n-1}{r}\right\rfloor+\mathbbm{1}_{2|k}.
\end{aligned}
\end{equation*}

This completes the proof.
\end{proof}

\begin{proof}[Proof of Lemma \ref{thm3}]
For the lower bound, we construct the extremal hypergraphs as follows. 
When $r\le n\le 2r-2$, the extremal hypergraphs are $n$-vertex $r$-uniform hypergraphs with $k-1$ hyperedges.
Let $n\ge 2r-1$, and suppose $n$ is not a multiple of $r-1$. 
One extremal hypergraph $\mathcal{H}_1$ is constructed in the following way. 
Let $n=1+a(r-1)+b$ with $a\ge 2$ and $0\le b\le r-1$. 
Consider $a-1$ hyperedges with multiplicity $\lfloor\frac{k-1}{2}\rfloor$ which share one common vertex $v$. 
And consider another hypergraph $\mathcal{H}'_1$ with $\lceil\frac{k-1}{2}\rceil$ hyperedges and the remaining $r-1+b$ vertices. 
Choose this $\mathcal{H}'_1$ in such a way that it shares one common vertex $v$ with the above $a-1$ hyperedges. 
We can construct another extremal hypergraph $\mathcal{H}_2$ as follows. 
Start with a Berge cycle of length $k-1$ with defining vertices $v_1,v_2,\ldots,v_{k-1}$ and defining hyperedge $e_1,e_2,\ldots,e_{k-1}$ which correspond to one hyperedge with multiplicity $k-1$. 
Choose all other hyperedges in such a way that they share the common non-adjacent $\lfloor\frac{k-1}{2}\rfloor$ vertices in $U=\{v_1,v_2,\ldots,v_{k-1}\}$. 

For the upper bound, we let $\mathcal{H}$ be a connected $n$-vertex $\mathcal{B}P_k$-free $r$-uniform multi-hypergraph.
We assume that each hyperedge of $\mathcal{H}$ has multiplicity at most $m$. 
Note that $m\le k-1$.
Otherwise, if $\mathcal{H}$ contains a hyperedge $e$ which has multiplicity $\ge k$, then there is a Berge cycle $C$ of length $k$ in $\mathcal{H}$. 
By the connectivity of $\mathcal{H}$, we can find a Berge path of length $k$, a contradiction. Hence, $m\le k-1$.

Assume that $\mathcal{H}$ contains a hyperedge $e'$ which has multiplicity $k-1$. 
Then, there is a Berge cycle $C'$ of length $k-1$ in $\mathcal{H}$. Denote by $v_1,v_2,\ldots,v_{k-1}$ the defining vertices of $C'$ and $e_1,e_2,\ldots,e_{k-1}$ the defining hyperedges of $C'$ such that for any $1\le i\le k-2$, we have $v_i,v_{i+1}\in e_i$ and $v_{k-1},v_1\in e_{k-1}$.
Note that there is no hyperedge in $\mathcal{H}\setminus \{v_1,v_2,\ldots,v_{k-1}\}$. 
Otherwise, by the connectivity of $\mathcal{H}$ we can find a Berge path of length $k$, a contradiction.
Also, any other hyperedges intersecting $C'$ have multiplicity exactly one. 
Moreover, there exists no Berge path of length 2 with one terminal defining vertex from $\{v_1,v_2,\ldots,v_{k-1}\}$ and the other defining vertices in $V(\mathcal{H})\setminus \{v_1,v_2,\ldots,v_{k-1}\}$. 
Otherwise, such a Berge path $P_A$ of length 2 and the Berge path $P_B$ of length $k-2$ in $C'$ satisfying that one terminal defining vertex of $P_B$ is the same as one terminal defining vertex of $P_A$ together constitute a Berge path of length $k$, a contradiction. 
There cannot be two hyperedges $e,f$ in $\mathcal{H}$ such that $v_i\in e\cap \{v_1,v_2,\ldots,v_{k-1}\}$ and $v_{i+1}\in f\cap \{v_1,v_2,\ldots,v_{k-1}\}$. 
Otherwise, we can find a Berge path of length $k$ in $e,f,C$ or $e,C,f$, a contradiction. For any two distinct vertices $v_i,v_j\in \{v_1,v_2,\ldots,v_{k-1}\}$, there cannot be two hyperedges $e,f$ such that $v_i\in e\cap \{v_1,v_2,\ldots,v_{k-1}\}, v_j\in f\cap \{v_1,v_2,\ldots,v_{k-1}\}$ and $(e\setminus \{v_1,v_2,\ldots,v_{k-1}\})\cap (f\setminus \{v_1,v_2,\ldots,v_{k-1}\})\neq \emptyset$. 
Hence, we have 
$$
e(\mathcal{H})\le k-1+\frac{n-k+1}{r-\left\lfloor\frac{k-1}{2}\right\rfloor}.
$$

Next we assume that $m\le k-2$. Suppose that a longest Berge path in $\mathcal{H}$ has length $t$.
Note that $t\le k-1$. 
Firstly, we assume that $\mathcal{H}$ contains a Berge cycle $C$ of length $t$. Denote by $U=\{v_1,v_2,\ldots,v_t\}$ and $\mathcal{F}=\{e_1,e_2,\ldots,e_t\}$, the defining vertices and hyperedges of $C$ such that for $1\le i\le t-1$ we have $v_i,v_{i+1}\in e_i$ and $v_1,v_t\in e_t$. 
Note that each $e_i$ has multiplicity one for any $1\le i\le t$. Otherwise, we can find a Berge path of length $t+1$, a contradiction.
Note that there exists no hyperedge in $\mathcal{H}[V(\mathcal{H})\setminus U]$. Otherwise, by the connectivity we can extend a Berge path of length $t-1$ in $C$ to a Berge path of length $t+1$, a contradiction.
Also, any other hyperedges intersecting $C$ have multiplicity exactly one. Moreover, there exists no Berge path of length 2 with one terminal defining vertex from $U$ and the other defining vertices in $V(\mathcal{H}\setminus U)$. Otherwise, such a Berge path $P_A$ of length 2 and the Berge path $P_B$ of length $t-1$ in $C$ satisfying that one terminal defining vertex of $P_B$ is the same as one terminal defining vertex of $P_A$ together constitute a Berge path of length $t+1$, a contradiction. 
There cannot be two hyperedges $e,f$ in $\mathcal{H}$ such that $v_i\in e\cap U$ and $v_{i+1}\in f\cap U$. Otherwise, we can find a Berge path of length $t+1$ in $e,f,\mathcal{B}C_t$ or $e,\mathcal{B}C_t,f$, a contradiction. For any two distinct vertices $v_i,v_j\in U$, there cannot be two hyperedges $e,f$ such that $v_i\in e\cap U, v_j\in f\cap U$ and $(e\setminus U)\cap (f\setminus U)\neq \emptyset$. 
Hence, we have 
$$
e(\mathcal{H})\le t+\frac{n-t}{r-\left\lfloor\frac{t}{2}\right\rfloor}\le k-1+\frac{n-k+1}{r-\left\lfloor\frac{k-1}{2}\right\rfloor}.
$$

Now we assume that $\mathcal{H}$ contains no Berge cycle of length $t$. We prove the theorem by induction on the number of vertices. 
The theorem trivially holds for $n\le r-1$.  When $r\le n\le 2r-2$, by Theorem \ref{multi} and the assumption that $\mathcal{H}$ contains no Berge cycle of length $t$, we have that $e(\mathcal{H})\le t-1<k-1$.

For the case $n=2r+1$, by Lemma~\ref{lemma20} we have that either there exists a vertex subset $S$ of size at least $2r-2$ such that $|N_{\mathcal{H}}(S)|\le m+1\le k-1$ or there exists a vertex subset $S$ of size at least $2r-1$ such that $|N_{\mathcal{H}}(S)|\le k-1$. Hence, 
$e(\mathcal{H})\le k-1=\lfloor\frac{n-1}{r-1}\rfloor\lfloor\frac{k-1}{2}\rfloor+\mathbbm{1}_{2|k}$.

Now we suppose $n>2r-1$ and that the theorem holds for any connected $r$-uniform multi-hypergraph with $n'<n$ ($n'$ is not multiple of $r-1$) vertices.
By Lemma~\ref{lemma20}, we have that either there exists a vertex subset $S$ of size at least $2r-2$ such that $|N_{\mathcal{H}}(S)|\le m+1\le k-1$ or there exists a vertex subset $S$ of size at least $2r-1$ such that $|N_{\mathcal{H}}(S)|\le k-1$.  Let $\mathcal{H}'$ be the hypergraph induced by $V'=V(\mathcal{H})\setminus S$.
Then 
\begin{equation*}
\begin{aligned}
e(\mathcal{H})&\le (k-1)+e(\mathcal{H'})\\
&\le (k-1)+\left\lfloor\frac{n-1-(2r-2)}{r-1}\right\rfloor\left\lfloor\frac{k-1}{2}\right\rfloor+\mathbbm{1}_{2|k}\\
&\le \left\lfloor\frac{n-1}{r-1}\right\rfloor\left\lfloor\frac{k-1}{2}\right\rfloor+\mathbbm{1}_{2|k}
\end{aligned}
\end{equation*}
when $n$ is not a multiple of $r-1$.

Hence, we have $e(\mathcal{H})=0$ if $n\le r-1$, $e(\mathcal{H})\le k-1$ if $r\le n\le 2r-2$ and 
\[
e(\mathcal{H})\le \left\lfloor\frac{n-1}{r-1}\right\rfloor\left\lfloor\frac{k-1}{2}\right\rfloor+\mathbbm{1}_{2|k}
\]
if $n\ge 2r-1$ and $n$ is not a multiple of $r-1$.

This completes the proof.    
\end{proof}

\begin{proof}[Proof of Theorem \ref{thm2}]
For the lower bound, we construct an extremal hypergraph $\mathcal{H}$ as follows. 
The vertex set of $\mathcal{H}$ is $\{v_1,v_2,\ldots,v_{n}\}$ and the edge set is $\{\{v_1,v_2,\ldots,v_{r-1},v_i\}:r\le i\le n\}$.
It is easy to verify that $\mathcal{H}$ is connected and $\mathcal{B}P_{r+1}$-free.

For the upper bound, we let $\mathcal{H}$ be a connected $n$-vertex $r$-uniform hypergraph which is $\mathcal{B}P_{r+1}$-free. 
We discuss it into two cases. At first, if $\mathcal{H}$ is also $\mathcal{BC}_{\ge r}$-free, then by Theorem~\ref{thmcycle} we have that $e(\mathcal{H})\le n-r+1$ when $n$ is large. 
Next we may assume that $\mathcal{H}$ contains a Berge cycle of length $r$. Indeed, if $\mathcal{H}$ contains a Berge cycle of length  at least $r+2$, we can indirectly find a Berge path of length $r+1$.
If $\mathcal{H}$ contains a Berge cycle of length $r+1$, then we can still find a Berge path of length $r+1$ since $\mathcal{H}$ is connected and $n$ is large. 

Denote by $\mathcal{B}C_r$ a Berge cycle of length $r$ in $\mathcal{H}$, by $v_1,v_2,\ldots,v_{r}$ the defining vertices, and by $e_1,e_2,\ldots,e_r$ the defining hyperedges such that for any $1\le i\le r$ we have $v_i,v_{i+1}\in e_i$ and $v_r,v_1\in e_r$.
Note that there exists no hyperedge in $\mathcal{H}[V(\mathcal{H})\setminus \{v_1,v_2,\ldots,v_{r}\}]$. Otherwise, by the connectivity we can extend a Berge path of length $r-1$ in $\mathcal{B}C_{r}$ to a Berge path of length $r+1$, a contradiction. And there exists no Berge path of length 2 with one terminal defining vertex from $\{v_1,v_2,\ldots,v_{r}\}$ and the other defining vertices in $V(\mathcal{H}\setminus \{v_1,v_2,\ldots,v_{r}\})$. 
Otherwise, such a Berge path $P_A$ of length 2 and the Berge path $P_B$ of length $r-1$ in $\mathcal{B}C_r$ satisfying that one terminal defining vertex of $P_B$ is the same as one terminal defining vertex of $P_A$ together constitute a Berge path of length $r+1$, a contradiction. 
There cannot be two hyperedges $e,f$ in $\mathcal{H}$ such that $v_i\in e\cap \{v_1,v_2,\ldots,v_{r}\}$ and $v_{i+1}\in f\cap \{v_1,v_2,\ldots,v_{r}\}$. 
Otherwise, we can find a Berge path of length $r+1$ in $e,f,\mathcal{B}C_r$ or $e,\mathcal{B}C_r,f$, a contradiction. For any two distinct vertices $v_i,v_j\in \{v_1,v_2,\ldots,v_{r}\}$, there cannot be two hyperedges $e,f$ such that $v_i\in e\cap \{v_1,v_2,\ldots,v_{r}\}, v_j\in f\cap \{v_1,v_2,\ldots,v_{r}\}$ and $(e\setminus \{v_1,v_2,\ldots,v_{r}\})\cap (f\setminus \{v_1,v_2,\ldots,v_{r}\})\neq \emptyset$. 
Hence, we have 
$$
e(\mathcal{H})\le r+\frac{n-r}{r-\left\lfloor\frac{r}{2}\right\rfloor}\le n-r+1
$$
when $n$ is large. 
This completes the proof.
\end{proof}

\section{Proof of Lemma \ref{lemma20}}\label{sec4}
Fix integers $n>r\ge k\ge 3$ and let $\mathcal{H}$ be an $n$-vertex connected $\mathcal{B}P_k$-free $r$-uniform multi-hypergraph. 
Consider a longest Berge path 
$$
P=e_1,v_1,e_2,v_2,\ldots,e_{t-1},v_{t-1},e_t
$$
such that $v_1\in e_1, v_{t-1}\in e_t$ and $\{v_{i-1},v_i\}\subset e_i$ for $i=2,3,\ldots,t-1$. Let $\mathcal{F}=E(P)=\{e_1,e_2,\ldots,e_t\}$ and $U=\{v_1,v_2,\ldots,v_{t-1}\}$, the defining hyperedges and vertices of this path.

\begin{claim}
$t\le k-1$.    
\end{claim}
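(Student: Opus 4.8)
The plan is to read off the bound directly from the two defining features of the path $P$: that it realizes a longest Berge path in $\mathcal{H}$, and that $\mathcal{H}$ contains no copy of $\mathcal{B}P_k$. First I would note that the displayed path $P=e_1,v_1,e_2,\ldots,v_{t-1},e_t$ is, by construction, a Berge path whose defining hyperedge set $\mathcal{F}=\{e_1,\ldots,e_t\}$ has exactly $t$ members, and whose internal connecting vertices $v_1,\ldots,v_{t-1}$ satisfy $v_i\in e_i\cap e_{i+1}$ for each $i$ (this follows from $v_1\in e_1$, $\{v_{i-1},v_i\}\subset e_i$ for $2\le i\le t-1$, and $v_{t-1}\in e_t$). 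Together with the two endpoint vertices lying in $e_1$ and $e_t$, this exhibits $P$ as a genuine Berge path of length $t$, so $\mathcal{H}$ certainly contains a $\mathcal{B}P_t$.

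Next I would argue by contradiction. Suppose $t\ge k$. If $t=k$, then $P$ itself is a copy of $\mathcal{B}P_k$, immediately contradicting the $\mathcal{B}P_k$-freeness of $\mathcal{H}$. If $t>k$, I would truncate $P$ after its $k$-th hyperedge, retaining $e_1,\ldots,e_k$ together with the connecting vertices $v_1,\ldots,v_{k-1}$ and the endpoint $v_k\in e_k$ (which is available since $k\le t-1$ forces $\{v_{k-1},v_k\}\subset e_k$). This sub-path uses $k$ distinct hyperedges and $k+1$ distinct vertices in the required nested pattern, hence is a legitimate copy of $\mathcal{B}P_k$ in $\mathcal{H}$, again a contradiction. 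Therefore $t\le k-1$.

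The only point that needs any care, and the closest thing to an obstacle, is checking that the truncated sub-path is indeed a valid Berge path, i.e. that its connecting and endpoint vertices are pairwise distinct and correctly contained in the relevant hyperedges. But this is inherited at once from the fact that $P$ is itself a valid Berge path with all defining vertices and hyperedges distinct. In essence the claim is a direct restatement of the maximality of $P$ combined with the hypothesis that $\mathcal{H}$ is $\mathcal{B}P_k$-free, so I would expect the entire argument to occupy only a sentence or two in the final write-up.
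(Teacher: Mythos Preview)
Your argument has a real gap. The displayed object $P=e_1,v_1,e_2,\ldots,v_{t-1},e_t$ carries $t$ hyperedges but only the $t-1$ \emph{internal} connecting vertices $U=\{v_1,\ldots,v_{t-1}\}$; to upgrade it to a Berge path of length $t$ in the sense of Definition~1.2 you must exhibit two further defining vertices, say $w_0\in e_1\setminus U$ and $w_t\in e_t\setminus(U\cup\{w_0\})$. You simply assert that such endpoints exist, but this is exactly the content that needs proof. The only sub-path you get for free is $v_1,e_2,\ldots,e_{t-1},v_{t-1}$, which has length $t-2$, so $\mathcal{B}P_k$-freeness by itself yields merely $t-2\le k-1$, i.e.\ $t\le k+1$.

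The paper's proof is devoted precisely to disposing of the two residual cases $t=k+1$ and $t=k$, and each one obstructs your endpoint step. When $t=k+1$ and $r=k$ one has $|U|=k=r$, and it may happen that $V(e_1)=U$, so that $e_1\setminus U=\emptyset$ and no starting endpoint $w_0$ is available at all; the paper handles this by producing a Berge cycle of length $k$ and then invoking connectivity of $\mathcal{H}$ to extend it to a $\mathcal{B}P_k$. When $t=k$ one has $|U|=k-1<r$, so $e_1\setminus U$ and $e_t\setminus U$ are nonempty, but a common vertex $w\in(e_1\cap e_t)\setminus U$ could force $w_0=w_t$; the paper excludes this using the standing hypothesis that $\mathcal{H}$ is $\mathcal{B}C_t$-free, since such a $w$ closes $P$ into a Berge cycle of length $t$. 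Your argument invokes neither connectivity nor the $\mathcal{B}C_t$-free assumption, so it cannot handle these boundary cases; the claim does not follow from maximality of $P$ together with $\mathcal{B}P_k$-freeness alone.
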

\begin{proof}
If $t\ge k+2$, then $v_1,e_2,v_2,e_3,\ldots,e_{k+1},v_{k+1}$ is a Berge path of length $k$, a contradiction. Hence, $t\le k+1$.
Suppose that $t=k+1$. 
Then, $P=e_1,v_1,e_2,v_2,\ldots,e_k,v_k,e_{k+1}$.
If $k=r$ and $V(e_1)=V(e_{k+1})=\{v_1,v_2,\ldots,v_k\}$, then we have a Berge cycle of length $k$. 
By the connectivity of $\mathcal{H}$, we can extend it to a Berge path of length $k$, a contradiction.
If either $V(e_1)\neq \{v_1,v_2,\ldots,v_k\}$ or $V(e_{k+1})\neq \{v_1,v_2,\ldots,v_k\}$ holds, we can find a Berge path of length $k$, a contradiction. Hence $t\le k$.

Suppose that $t=k$. Then $P=e_1,v_1,e_2,v_2,\ldots,e_{k-1},v_{k-1},e_k$. Since $r\ge k>k-1$, we have $e_1\setminus \{v_1,v_2,\ldots,v_{k-1}\}\neq \emptyset$ and $e_k\setminus \{v_1,v_2,\ldots,v_{k-1}\}\neq \emptyset$. Since there is no Berge cycle of length $k$ in $\mathcal{H}$,  we have $(e_1\cap e_t)\setminus \{v_1,v_2,\ldots,v_{k-1}\}=\emptyset$. 
Then, we can find a Berge path of length $k$, a contradiction.
Thus, we have $t\le k-1$.
\end{proof}

Note that $e_1\setminus U\neq \emptyset$ and $e_t\setminus U\neq \emptyset$.
\begin{lemma}\label{lemma21}
Suppose $w_1\in e_1\setminus U$ and $w_2\in e_t\setminus U$. Then $N_{\mathcal{H}}(w_1)\subseteq \mathcal{F}\setminus \{e_t\}$ and $N_{\mathcal{H}}(w_2)\subseteq \mathcal{F}\setminus \{e_1\}$. Hence $N_{\mathcal{H}}(e_1\setminus U)\subseteq \mathcal{F}\setminus \{e_t\}$ and $N_{\mathcal{H}}(e_t\setminus U)\subseteq \mathcal{F}\setminus \{e_1\}$.
\end{lemma}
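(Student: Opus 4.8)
The plan is to prove both conclusions for $w_1$; the statements for $w_2$ then follow by reversing $P$ (interchanging the roles of $e_1$ and $e_t$). I would split the assertion $N_{\mathcal H}(w_1)\subseteq \mathcal F\setminus\{e_t\}$ into two parts: (B) $w_1\notin e_t$, which gives $e_t\notin N_{\mathcal H}(w_1)$; and (A) every hyperedge containing $w_1$ lies in $\mathcal F$.

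For part (B) I would invoke the $\mathcal{B}C_t$-freeness of $\mathcal H$ assumed throughout this proof. If $w_1\in e_t$, then since $w_1\in e_1$ as well and $w_1\notin U$, the vertices $w_1,v_1,v_2,\dots,v_{t-1}$ together with the edges $e_1,e_2,\dots,e_t$ form a Berge cycle of length $t$: indeed $\{w_1,v_1\}\subset e_1$, $\{v_{i-1},v_i\}\subset e_i$ for $2\le i\le t-1$, and $\{v_{t-1},w_1\}\subset e_t$, with all $t$ vertices distinct and all $t$ edges distinct. This contradicts $\mathcal{B}C_t$-freeness, so $w_1\notin e_t$.

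For part (A), suppose toward a contradiction that some $f\in E(\mathcal H)\setminus\mathcal F$ contains $w_1$. I would prepend $f$ to $P$ to build the longer Berge path $f,w_1,e_1,v_1,e_2,\dots,e_{t-1},v_{t-1},e_t$, which uses the $t+1$ distinct edges $f,e_1,\dots,e_t$ and the $t$ distinct connecting vertices $w_1,v_1,\dots,v_{t-1}$ (note $\{w_1,v_1\}\subset e_1$ since both $w_1$ and $v_1$ lie in $e_1$). To certify this as a genuine Berge path of length $t+1$ I must supply two further terminal vertices, one in $f$ and one in $e_t$, distinct from one another and from all connecting vertices. Since $|U|=t-1$ and $r\ge k\ge t+1$, the edge $e_t$ satisfies $|e_t\setminus U|\ge r-(t-1)\ge 2$, so I may fix two distinct candidates $y_1,y_2\in e_t\setminus U$; by part (B) neither equals $w_1$. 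Likewise $f\not\subseteq\{w_1\}\cup U$ (as $|f|=r>t=|\{w_1\}\cup U|$), so I may pick $z\in f\setminus(\{w_1\}\cup U)$. Choosing the terminal $y_i\ne z$ (possible since $y_1\ne y_2$) and taking $z$ as the terminal of $f$ yields a Berge path of length $t+1$, contradicting the maximality of $t$. Hence no such $f$ exists and $N_{\mathcal H}(w_1)\subseteq\mathcal F$; combined with (B) this gives $N_{\mathcal H}(w_1)\subseteq\mathcal F\setminus\{e_t\}$.

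Finally, the ``hence'' statement follows at once, since $N_{\mathcal H}(e_1\setminus U)=\bigcup_{w_1\in e_1\setminus U}N_{\mathcal H}(w_1)\subseteq\mathcal F\setminus\{e_t\}$, and symmetrically $N_{\mathcal H}(e_t\setminus U)\subseteq\mathcal F\setminus\{e_1\}$. The main obstacle I anticipate is the tight regime $r=t+1$ (forcing $k=r$ and $t=k-1$), where $f$ and the path may occupy exactly the same number of vertices; this is precisely why the two-candidate trick for the $e_t$-terminal is needed, rather than a single terminal choice, to guarantee that a fresh vertex survives for the extension.
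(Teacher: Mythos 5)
Your proposal is correct and follows essentially the same route as the paper: exclude $f=e_t$ via $\mathcal{B}C_t$-freeness, and exclude $f\notin\mathcal{F}$ by prepending $f$ to $P$ through $w_1$ to obtain a Berge path of length $t+1$, contradicting maximality (with the symmetric argument for $w_2$). The only difference is that you verify explicitly that suitable distinct terminal vertices exist for the extended path, a detail the paper's one-line argument ``$f,w_1,P$ is a longer Berge path'' leaves implicit.
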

\begin{proof}
Suppose that there exists a hyperedge $f\notin \mathcal{F}\setminus \{e_t\}$ containing $w_1$ (note here $f\neq e_t$ since $\mathcal{H}$ is $\mathcal{B}{C}_t$-free). 
Then, $f,w_1,P$ is a longer Berge path, a contradiction to the maximality of $P$. 
Suppose that there exists a hyperedge $g\notin \mathcal{F}\setminus \{e_1\}$ containing $w_2$ (note here $g\neq e_1$ since $\mathcal{H}$ is $\mathcal{B}C_t$-free). 
Then, $P,w_2,g$ is a longer Berge path, a contradiction to the maximality of $P$. 
This completes the lemma.
\end{proof}

\begin{lemma}\label{lemma22}
If for some $1\le i\le t-1$ we have $v_i\in e_1\cap U$, then $N_{\mathcal{H}}(e_i\setminus U)\subseteq \mathcal{F}\setminus \{e_t\}$. If for some $1\le j\le t-1$ we have $v_{j-1}\in e_t\cap U$, then $N_{\mathcal{H}}(e_j\setminus U)\subseteq \mathcal{F}\setminus \{e_1\}$.
\end{lemma}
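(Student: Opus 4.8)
The plan is to reduce both statements to Lemma~\ref{lemma21} by \emph{rerouting} the longest path $P$ so that the edge named in the conclusion becomes a terminal edge, while keeping the edge set $\mathcal{F}$ and the internal vertex set $U$ unchanged. The whole point is that Lemma~\ref{lemma21} uses nothing about $P$ beyond the facts that $P$ is a longest Berge path and that $\mathcal{H}$ is $\mathcal{B}C_t$-free; both facts persist for any rerouting, so Lemma~\ref{lemma21} applies verbatim to the new path.

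For the first statement, suppose $v_i\in e_1\cap U$ with $1\le i\le t-1$. The key observation is that the hypothesis $v_i\in e_1$, combined with $v_i\in e_{i+1}$ (which holds by the path structure, since $\{v_i,v_{i+1}\}\subset e_{i+1}$), lets us splice $e_1$ directly to $e_{i+1}$ through $v_i$. Concretely, I would traverse the initial segment backwards and then continue forward, forming
\[
P'=e_i,\,v_{i-1},\,e_{i-1},\,v_{i-2},\,\ldots,\,e_2,\,v_1,\,e_1,\,v_i,\,e_{i+1},\,v_{i+1},\,\ldots,\,v_{t-1},\,e_t.
\]
First I would verify that $P'$ is a genuine Berge path of length $t$: consecutive edges meet in the indicated vertex (in particular $e_1$ and $e_{i+1}$ meet in $v_i$), the edge set of $P'$ is exactly $\mathcal{F}$, and the internal connecting vertices of $P'$ are exactly the $t-1$ vertices $v_1,\ldots,v_{t-1}$, that is $U$, merely listed in a different order. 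Hence $P'$ is again a longest Berge path whose terminal edges are $e_i$ and $e_t$ and whose internal vertex set is $U$.

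Applying Lemma~\ref{lemma21} to $P'$, with $e_i$ playing the role of $e_1$ and $e_t$ unchanged, immediately yields $N_{\mathcal{H}}(e_i\setminus U)\subseteq \mathcal{F}\setminus\{e_t\}$, which is the first claim. (Unwinding that application: an edge $f\notin\mathcal{F}$ meeting $e_i\setminus U$ could be prepended to $P'$ to give a Berge path of length $t+1$, contradicting maximality, while $e_t$ meeting $e_i\setminus U$ would close $P'$ into a Berge cycle of length $t$, contradicting $\mathcal{B}C_t$-freeness.) The second statement is the mirror image: reversing $P$ to the longest path $e_t,v_{t-1},\ldots,v_1,e_1$ interchanges the roles of $e_1$ and $e_t$, so the first statement applied to this reversed path gives $N_{\mathcal{H}}(e_j\setminus U)\subseteq\mathcal{F}\setminus\{e_1\}$ whenever $v_{j-1}\in e_t\cap U$; equivalently one reroutes $e_t$ to $e_{j-1}$ through $v_{j-1}$ so that $e_j$ becomes a terminal edge, the admissible range of $j$ being the mirror image of the range of $i$.

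I expect the only real obstacle to be the bookkeeping in the reroute: one must check that splicing through $v_i$ reuses every vertex of $U$ exactly once, so that no defining vertex is repeated and $P'$ genuinely has length $t$, and that the hypothesis $v_i\in e_1$ is precisely what makes the splice legal. Once $P'$ is in hand there is nothing further to compute, since the conclusion is a direct transcription of Lemma~\ref{lemma21}.
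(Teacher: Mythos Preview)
Your proposal is correct and follows essentially the same approach as the paper: you construct exactly the same rerouted path $P'=e_i,v_{i-1},\ldots,v_1,e_1,v_i,e_{i+1},\ldots,v_{t-1},e_t$ and then invoke Lemma~\ref{lemma21}. The only cosmetic difference is that for the second statement the paper writes out the explicit rerouted path $e_j,v_j,\ldots,e_t,v_{j-1},e_{j-1},\ldots,v_1,e_1$, whereas you obtain it by reversing $P$ and appealing to the already-proved first statement; both arguments are equivalent.
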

\begin{proof}
Consider the Berge path 
$$
e_i,v_{i-1},e_{i-1},v_{i-2},\ldots,e_2,v_1,e_1,v_i,e_{i+1},v_{i+1},\ldots,e_{t-1},v_{t-1},e_t.
$$ 
This path has length $t$ and starts at edge $e_i$. By Lemma~\ref{lemma21} we have that $N_{\mathcal{H}}(e_i\setminus U)\subseteq \mathcal{F}\setminus \{e_t\}$.

For the second statement, we consider the Berge path 
$$
e_j,v_j,e_{j+1},v_{j+1},\ldots,e_{t-1},v_{t-1},e_t,v_{j-1},e_{j-1},v_{j-2},\ldots,e_2,v_1,e_1.
$$
This path has length $t$ and starts at edge $e_j$. 
By Lemma~\ref{lemma21} we have that $N_{\mathcal{H}}(e_j\setminus U)\subseteq \mathcal{F}\setminus \{e_1\}$.
\end{proof}

\begin{lemma}\label{lemma23}
If there are two vertices $v_i,v_j\in e_1\cap U$, with $i>j$ such that $(e_i\cap e_j)\setminus U\neq \emptyset$, then $N_{\mathcal{H}}(v_{i-1})\subseteq \mathcal{F}\setminus \{e_t\}$ and $N_{\mathcal{H}}(v_j)\subseteq \mathcal{F}\setminus \{e_t\}$. 
If there are two vertices $v_{i-1},v_{j-1}\in e_t\cap U$, with $i>j$ such that $(e_i\cap e_j)\setminus U\neq \emptyset$, then $N_{\mathcal{H}}(v_{i-1})\subseteq \mathcal{F}\setminus \{e_1\}$ and $N_{\mathcal{H}}(v_j)\subseteq \mathcal{F}\setminus \{e_1\}$.
\end{lemma}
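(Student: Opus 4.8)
The plan is to deduce both statements from Lemma~\ref{lemma21}. For each of the two target vertices I will produce another \emph{longest} Berge path (again of length $t$, with the same edge set $\mathcal{F}$ and the same final edge $e_t$) in which that vertex is a free, non-defining vertex of the \emph{first} edge; Lemma~\ref{lemma21} applied to this rerouted path then immediately gives the desired containment in $\mathcal{F}\setminus\{e_t\}$. Throughout, let $w\in(e_i\cap e_j)\setminus U$ be the common vertex provided by the hypothesis.

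To bound $N_{\mathcal{H}}(v_{i-1})$ I will use the reroute
\[
Q:\ e_{i-1},v_{i-2},e_{i-2},\ldots,e_{j+1},v_j,e_1,v_1,e_2,\ldots,e_{j-1},v_{j-1},e_j,w,e_i,v_i,e_{i+1},\ldots,v_{t-1},e_t,
\]
which traverses the block $e_{i-1},\ldots,e_{j+1}$ backwards (so $v_{i-1}$ dangles at the first edge), jumps from $e_{j+1}$ to $e_1$ through the hypothesis vertex $v_j\in e_1$, runs forward along $e_1,\ldots,e_j$, crosses from $e_j$ to $e_i$ through $w$, and finally rejoins the original tail $e_i,\ldots,e_t$ via $v_i$. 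Since the blocks $\{e_1,\ldots,e_j\}$, $\{e_{j+1},\ldots,e_{i-1}\}$, $\{e_i\}$, $\{e_{i+1},\ldots,e_t\}$ partition $\mathcal{F}$, the path $Q$ uses every edge exactly once; its connecting vertices turn out to be exactly $(U\setminus\{v_{i-1}\})\cup\{w\}$, which are distinct because $w\notin U$. Thus $Q$ is a longest Berge path with last edge $e_t$ and $v_{i-1}\in e_{i-1}\setminus\bigl((U\setminus\{v_{i-1}\})\cup\{w\}\bigr)$, so the argument of Lemma~\ref{lemma21} (any hyperedge through $v_{i-1}$ lying outside $\mathcal{F}\setminus\{e_t\}$ either lengthens $Q$ or closes a $\mathcal{B}C_t$, both impossible) gives $N_{\mathcal{H}}(v_{i-1})\subseteq\mathcal{F}\setminus\{e_t\}$.

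For $N_{\mathcal{H}}(v_j)$ I will use the mirror reroute
\[
Q':\ e_{j+1},v_{j+1},\ldots,e_{i-1},v_{i-1},e_i,w,e_j,v_{j-1},\ldots,e_2,v_1,e_1,v_i,e_{i+1},\ldots,e_t,
\]
which leaves $v_j$ dangling at the first edge $e_{j+1}$ and invokes the other hypothesis vertex $v_i\in e_1$ at its last junction; the identical analysis yields $N_{\mathcal{H}}(v_j)\subseteq\mathcal{F}\setminus\{e_t\}$. The second statement of the lemma then follows by reversing $P$: reading it as $e_t,v_{t-1},\ldots,v_1,e_1$ interchanges the roles of $e_1$ and $e_t$ and turns the hypothesis $v_{i-1},v_{j-1}\in e_t$ into one of the first type (the reversed indices $i'=t-j+1>j'=t-i+1$ satisfy $\tilde e_{i'}\cap\tilde e_{j'}=e_i\cap e_j$), and since the first statement concerns the same two vertices $v_{i-1},v_j$, we obtain $N_{\mathcal{H}}(v_{i-1}),N_{\mathcal{H}}(v_j)\subseteq\mathcal{F}\setminus\{e_1\}$.

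The one delicate point—and the heart of the argument—is arranging the reroute so that \emph{all} $t$ edges are still covered while $v_{i-1}$ is freed: the naive attempt of starting at $e_i$ and leaving through $w$ orphans the middle block $e_{j+1},\ldots,e_{i-1}$ and produces a path shorter than $t$, which is useless. Beginning instead at $e_{i-1}$ and clearing that middle block first, then using $v_j\in e_1$ to re-enter the head of $P$, is exactly what repairs this. I will also dispose of the boundary cases $i=j+1$ (where $v_{i-1}=v_j$ and the middle block is empty) and $j=1$ (where $v_j=v_1$ and the block $e_1,\ldots,e_{j-1}$ is empty) by the same but shorter reroutings, which are routine once the generic construction above is in place.
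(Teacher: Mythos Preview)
Your proof is correct and essentially identical to the paper's: your reroutes $Q$ and $Q'$ are literally the paths the paper calls $P_A$ and $P_B$, and your use of Lemma~\ref{lemma21} on them is the same. The only difference is cosmetic: for the second statement you invoke the reversal symmetry of $P$ (which works, as your index computation $i'=t-j+1$, $j'=t-i+1$ shows), while the paper instead writes out two further explicit paths $P_C,P_D$ ending at $e_1$; these are of course the images of $P_A,P_B$ under that reversal.
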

\begin{proof}
For the first statement, we let $u\in (e_i\cap e_j)\setminus U$.
We consider the following Berge paths.
\begin{equation*}
\begin{aligned}
P_A=&e_{i-1},v_{i-2},e_{i-2},v_{i-3},\ldots, e_{j+1},v_j,e_1,v_1,e_2,v_2,\ldots,\\
&v_{j-1},e_j,u,e_i,v_i,e_{i+1},v_{i+1},\ldots,e_{t-1},v_{t-1},e_t. \\   
\end{aligned}
\end{equation*}
\begin{equation*}
\begin{aligned}
P_B=&e_{j+1},v_{j+1},e_{j+2},v_{j+2},\ldots, v_{i-1},e_i,u,e_j,v_{j-1},e_{j-1},v_{j-2},\ldots,\\
&e_2,v_1,e_1,v_i,e_{i+1},v_{i+1},\ldots,e_{t-1},v_{t-1},e_t. 
\end{aligned}
\end{equation*}
Applying Lemma~\ref{lemma21} for $P_A$ and $P_B$, we have that $N_{\mathcal{H}}(v_{i-1})\subseteq \mathcal{F}\setminus \{e_t\}$ and $N_{\mathcal{H}}(v_j)\subseteq \mathcal{F}\setminus \{e_t\}$.

For the second statement, we let $v\in (e_i\cap e_j)\setminus U$. 
We consider the following Berge paths.
\begin{equation*}
\begin{aligned}
P_C=&e_{i-1},v_{i-2},e_{i-2},v_{i-3},\ldots,v_j,e_j,v,e_i,v_i,e_{i+1},\ldots,\\
&e_{t-1},v_{t-1},e_t,v_{j-1},e_{j-1},v_{j-2},\ldots,v_1,e_1. 
\end{aligned}
\end{equation*}
\begin{equation*}
\begin{aligned}
P_D=&e_{j+1},v_{j+1},e_{j+2},v_{j+2},\ldots,e_{i-1},v_{i-1},e_t,v_{t-1},e_{t-1},\ldots,\\
&e_{i+1},v_i,e_i,v,e_j,v_{j-1},e_{j-1},\ldots,v_1,e_1. 
\end{aligned}
\end{equation*}
Applying Lemma~\ref{lemma21} for $P_C$ and $P_D$, we have that $N_{\mathcal{H}}(v_{i-1})\subseteq \mathcal{F}\setminus \{e_1\}$ and $N_{\mathcal{H}}(v_j)\subseteq \mathcal{F}\setminus \{e_1\}$.
\end{proof}

Let $d_1\ge 1$ and $d_2\ge 1$ be two integers with $d_1+d_2\le m+1$ such that $V(e_1)=V(e_2)=\cdots=V(e_{d_1})\neq V(e_{d_1+1})$ and $V(e_t)=V(e_{t-1})=\cdots=V(e_{t-d_2+1})\neq V(e_{t-d_2})$.
\begin{claim}
If $e_1\cap U=\{v_1,v_2,\ldots,v_{d_1}\}$ and $e_t\cap U=\{v_{t-1},v_{t-2},\ldots,v_{t-d_2}\}$, then either $(e_1\cup e_t)\setminus \{v_{d_1},v_{t-d_2}\}$ is incident with $m+1$ hyperedges or there exists a set $S$ of size at least $2r-1$ such that $N_{\mathcal{H}}(S)\subseteq \mathcal{F}$.
\end{claim}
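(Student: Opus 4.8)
The plan is to show that the hyperedge-neighborhood of the set $S:=(e_1\cup e_t)\setminus\{v_{d_1},v_{t-d_2}\}$ is confined to the two parallel blocks $\{e_1,\dots,e_{d_1}\}$ and $\{e_{t-d_2+1},\dots,e_t\}$, whose total size is at most $d_1+d_2\le m+1$; whenever this confinement fails I would instead enlarge $S$ into a set of size at least $2r-1$ whose neighborhood still lies inside $\mathcal{F}$, thereby landing in one of the two alternatives.

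First I would pin down the size of $S$. Since $\mathcal{H}$ is $\mathcal{B}C_t$-free, $e_1$ and $e_t$ cannot share a vertex outside $U$: such a vertex $z$ would close $z,e_1,v_1,e_2,\dots,v_{t-1},e_t,z$ into a Berge cycle of length $t$. Hence $e_1\cap e_t\subseteq U$, and from $e_1\cap U=\{v_1,\dots,v_{d_1}\}$ and $e_t\cap U=\{v_{t-1},\dots,v_{t-d_2}\}$ a short count gives $|S|=2r-2$ (the $U$-overlap is empty when $d_1+d_2<t$ and equals $\{v_{d_1}\}=\{v_{t-d_2}\}$ when $d_1+d_2=t$, and in both cases exactly two incidences are deleted). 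This matches the size demanded in part ($i$) of Lemma~\ref{lemma20}.

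Next I would bound $N_{\mathcal{H}}(S)$. By Lemma~\ref{lemma21}, $N_{\mathcal{H}}(e_1\setminus U)\subseteq\mathcal{F}\setminus\{e_t\}$ and $N_{\mathcal{H}}(e_t\setminus U)\subseteq\mathcal{F}\setminus\{e_1\}$, so no vertex of $S$ outside $U$ meets a non-path edge. Because $V(e_1)=\cdots=V(e_{d_1})$, the vertices of $e_1\setminus U$ and every $v_i$ with $i<d_1$ lie in all of $e_1,\dots,e_{d_1}$, with the symmetric statement at the $e_t$-end. The goal is to show that the only edges meeting $S$ are precisely these at most $d_1+d_2$ block edges. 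For each $x\in S$ I would exclude a stray edge through $x$ (a non-path edge, or a path edge $e_j$ strictly between the blocks) by rotating $P$ — exactly as in Lemmas~\ref{lemma22} and~\ref{lemma23} — into a longest Berge path having $x$ as a terminal vertex; extending along the stray edge then produces either a Berge path of length $t+1$ or, if the stray edge is absorbed by the current vertex set, a $\mathcal{B}C_t$, each contradicting our hypotheses. This yields $N_{\mathcal{H}}(S)\subseteq\{e_1,\dots,e_{d_1}\}\cup\{e_{t-d_2+1},\dots,e_t\}$, hence $|N_{\mathcal{H}}(S)|\le d_1+d_2\le m+1$, which is the first alternative.

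If some such stray incidence cannot be excluded, the rotation instead exposes a configuration $(e_i\cap e_j)\setminus U\neq\emptyset$ with $v_i,v_j\in e_1\cap U$ (or its mirror at the $e_t$-end), and Lemma~\ref{lemma23} then certifies that additional defining vertices — among them the removed exit vertices $v_{d_1},v_{t-d_2}$ in the relevant configurations — have hyperedge-neighborhood contained in $\mathcal{F}$. Adjoining such vertices enlarges $S$ to a set $S'$ with $N_{\mathcal{H}}(S')\subseteq\mathcal{F}$ and $|S'|\ge 2r-1$, giving the second alternative. I expect the rotation bookkeeping of the previous paragraph to be the main obstacle: one must verify that each rearranged sequence is a bona fide Berge path of length $t$ (distinct edges and distinct defining vertices), and must cleanly separate the ``extends to length $t+1$'' outcome from the ``closes into $\mathcal{B}C_t$'' outcome, since it is precisely the borderline configurations that decide between the two conclusions of the claim.
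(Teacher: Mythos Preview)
Your overall shape is right---compute $|S|=2r-2$, confine $N_{\mathcal{H}}(S)$ to $\mathcal{F}$, and then either tighten to the two blocks or enlarge $S$---but the mechanism you propose for each branch does not work in this particular configuration.

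For the first alternative, you claim that a ``stray'' incidence $x\in e_j$ with $x\in e_1\setminus\{v_{d_1}\}$ and $d_1<j\le t-d_2$ can be excluded by rotating $P$ so that $x$ is terminal and then ``extending along the stray edge'' to a Berge path of length $t+1$ or a $\mathcal{B}C_t$. But $e_j$ is already an edge of $P$; it cannot be appended to produce a longer path, and the only cycle one obtains this way, namely $x,e_1,v_1,e_2,\ldots,v_{j-1},e_j,x$, has length $j<t$. Such a stray incidence is fully compatible with $P$ being longest and with $\mathcal{H}$ being $\mathcal{B}C_t$-free, so it cannot be ruled out. For the second alternative, you fall back on Lemma~\ref{lemma23}; however, under the hypothesis $e_1\cap U=\{v_1,\ldots,v_{d_1}\}$ with $V(e_1)=\cdots=V(e_{d_1})$, every admissible pair $i>j$ in that lemma has $i,j\le d_1$, and the conclusion only yields $v_1,\ldots,v_{d_1-1}$---vertices already in $S$. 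In particular Lemma~\ref{lemma23} never recovers $v_{d_1}$ or any vertex outside the two blocks here, so it cannot push $|S|$ up to $2r-1$.

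The paper's proof uses the stray incidence itself to manufacture the extra vertex. If $w_1\in e_1\setminus\{v_{d_1}\}$ lies in some $e_j$ with $d_1<j\le t-d_2$, one rotates to the longest path
\[
P_A=e_{j-1},v_{j-2},\ldots,e_2,v_1,e_1,w_1,e_j,v_j,\ldots,v_{t-1},e_t,
\]
in which $v_{j-1}$ is a non-defining vertex of the first edge $e_{j-1}$; Lemma~\ref{lemma21} applied to $P_A$ then gives $N_{\mathcal{H}}(v_{j-1})\subseteq\mathcal{F}$. Since $v_{j-1}\notin S$, adjoining it (and, symmetrically, a vertex $v_k$ from a stray incidence on the $e_t$-side if one exists) yields a set of size at least $2r-1$ with neighborhood in $\mathcal{F}$. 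If no stray incidence exists on either side, then $N_{\mathcal{H}}(S)\subseteq\{e_1,\ldots,e_{d_1}\}\cup\{e_{t-d_2+1},\ldots,e_t\}$ and the first alternative holds. This rotation---through a \emph{non-defining} vertex of $e_1$ rather than a defining vertex $v_i\in e_1\cap U$---is not an instance of Lemma~\ref{lemma22} or~\ref{lemma23}, and it is the step your outline is missing.
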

\begin{proof}
First of all, note that $(e_1\cap e_t)\setminus U=\emptyset$ since $\mathcal{H}$ is $\mathcal{B}C_{t}$-free. Note that $v_1,v_2,\ldots,v_{d_1-1}$ can be exchanged with the vertices in $e_1\setminus U$ and $v_{t-1},v_{t-2},\ldots,v_{t-d_2+1}$ can be exchanged with the vertices in $e_t\setminus U$.
Since $\mathcal{H}$ is $\mathcal{B}C_t$-free, we have $\{v_1,v_2,\ldots,v_{d_1-1}\}\cap \{v_{t-1},v_{t-2},\ldots,v_{t-d_2+1}\}=\emptyset$.

By Lemma~\ref{lemma21}, we have $N_{\mathcal{H}}(e_1\setminus \{v_{d_1}\})\subseteq \mathcal{F}\setminus \{e_t\}$ and $N_{\mathcal{H}}(e_t\setminus \{v_{t-d_2}\})\subseteq \mathcal{F}\setminus \{e_1\}$. 
Suppose $w_1\in e_1\setminus \{v_{d_1}\}$ is incident with a hyperedge $e_j$ ($d_1<j\le t-d_2$). Then the Berge path 
\[P_A=e_{j-1},v_{j-2},e_{j-2},v_{j-3},\ldots,e_2,v_1,e_1,w_1,e_j,v_j,\ldots,e_{t-1},v_{t-1},e_t
\]
has maximal length. 
Since $v_{j-1}$ is a non-defining vertex in the first hyperedge of $P_A$, applying Lemma~\ref{lemma21} to $P_A$, we have that $N_{\mathcal{H}}(v_{j-1})\subseteq \mathcal{F}\setminus \{e_{t-d_2+1},e_{t-d_2+2},\ldots,e_{t}\}$.
Similarly, suppose $w_2\in e_t\setminus \{v_{d_{t-d_2}}\}$ is incident with a hyperedge $e_k$ ($d_1+1\le k\le t-d_2$), we may assume $w_2\in e_t\setminus U$. Then the Berge path 
\[
P_B=e_{k+1},v_{k+1},e_{k+2},v_{k+2},\ldots,e_{t-1},v_{t-1},e_t,w_2,e_k,v_{k-1},\ldots,v_1,e_1
\]
has maximal length. 
Since $v_{k}$ is a non-defining vertex in the first hyperedge of $P_B$, applying Lemma~\ref{lemma21} to $P_B$, we have that $N_{\mathcal{H}}(v_{k})\subseteq \mathcal{F}\setminus \{e_1,e_2,\ldots,e_{d_1}\}$. 
Thus, we have that $S=(e_1\setminus \{v_{d_1}\}\cup \{v_{j-1}\})\cup (e_t\setminus \{v_{t-d_2}\}\cup \{v_{k}\})$ of size at least $2r-1$ such that $N_{\mathcal{H}}(S)\subseteq \mathcal{F}$. 
Otherwise,
if there are no such $w_1$ and $w_2$, then we have a set $(e_1\cup e_t)\setminus \{v_{d_1},v_{t-d_2}\}$ of size at least $2r-2$ incident with at most $d_1+d_2\le m+1$ hyperedges. 

If there exists $w_1\in e_1\setminus {v_{d_1}}$ such that $w_1$ is incident with one hyperedge $e_{j_1}$ satisfying ($d_1<j_1\le t-d_2$) but for each vertex $w_2\in e_t\setminus \{v_{t-d_2}\}$ we have $N_{\mathcal{H}}(w_2)\subseteq \{e_{t-d_2+1},e_{t-d_2+2},\ldots,e_{t}\}$,
then by the above arguments we have a set $S=(e_1\setminus \{v_{d_1}\}\cup \{v_{j_1-1}\})\cup (e_t\setminus \{v_{t-d_2}\})$ of size at least $2r-1$ such that $N_{\mathcal{H}}(S)\subseteq \mathcal{F}$.

If there exists $w_2\in e_t\setminus {v_{t-d_2}}$ such that $w_2$ is incident with one hyperedge $e_{j_2}$ satisfying ($d_1<j_2\le t-d_2$) but for each vertex $w_1\in e_1\setminus \{v_{d_1}\}$ we have $N_{\mathcal{H}}(w_1)\subseteq \{e_1,e_2,\ldots,e_{d_1}\}$, then we have a set $S=(e_1\setminus \{v_{d_1}\})\cup (e_t\setminus \{v_{t-d_2}\}\cup \{v_{j_2}\})$ of size at least $2r-1$ such that $N_{\mathcal{H}}(S)\subseteq \mathcal{F}$.

This completes this claim.
\end{proof}

From here we may assume that $|(e_1\cup e_t)\cap U|>d_1+d_2$. 
Let $e_1\cap U=\{v_{i_0},v_{i_1},\ldots,v_{i_s}\}$ and $e_t\cap U=\{v_{j_{s+1}-1},v_{j_{s+2}-1},\ldots,v_{j_{\ell}-1}\}$, where $1=i_0<i_1<i_2<\cdots<i_s$ and $j_{s+1}-1<j_{s+2}-1<\cdots<j_{\ell}-1=t-1$.

Recursively define the sets $A_1:=(e_1\cup e_t)\setminus U$ and for $p=1,2,\ldots,s$,
\begin{equation*}
A_{p+1}=
\begin{cases}
A_p\cup (e_{i_p}\setminus U), &\mbox{if }(e_{i_p}\setminus U)\cap A_p=\emptyset;\\
A_p\cup (e_{i_p}\setminus U)\cup \{v_{i_p-1}\},&\mbox{otherwise}.
\end{cases}
\end{equation*}
During this process, we always have $(e_{i_p}\setminus U)\cap (e_t\setminus U)=\emptyset$ for any $p=1,2,\ldots,s$. If not, we assume $w\in (e_{i_p}\setminus U)\cap (e_t\setminus U)$. 
Then,
$$
v_{t-1},e_t,w,e_{i_p},v_{i_p-1},e_{i_p-1},\ldots,v_1,e_1,v_{i_p},e_{i_p+1},v_{i_p+1},\ldots,e_{t-1},v_{t-1}
$$
is a Berge cycle of length $t$, a contradiction.

For $p=s+1,s+2,\ldots,\ell$,
\begin{equation*}
A_{p+1}=
\begin{cases}
A_p\cup (e_{j_p}\setminus U), &\mbox{if }(e_{j_p}\setminus U)\cap A_p=\emptyset;\\
A_p\cup (e_{j_p}\setminus U)\cup \{v_{j_p-1}\},&\mbox{otherwise}.
\end{cases}
\end{equation*}
During this process, we always have $(e_{j_p}\setminus U)\cap (e_1\setminus U)=\emptyset$ for any $p=s+1,s+2,\ldots,\ell$. If not, we assume $w\in (e_{j_p}\setminus U)\cap (e_1\setminus U)$. Then,
$$
v_1,e_1,w,e_{j_p},v_{j_p},e_{j_p+1},v_{j_p+1},\ldots,e_{t-1},v_{t-1},e_t,v_{j_p-1},e_{j_p-1},v_{j_p-2},\ldots,e_2,v_1
$$
is a Berge cycle of length $t$, a contradiction.

Note that for $1\le p\le s$, the only possible defining vertices in $A_p$ are $v_{i_1-1},v_{i_2-1},\ldots,v_{i_{p-1}-1}$. 
Therefore $v_{i_p-1}$ is not contained in $A_p$.
For $s+1\le p\le \ell$, the only possible defining vertices in $A_p$ are $v_{i_1-1},v_{i_2-1},\ldots,v_{i_{s-1}-1},\ldots,v_{j_{s+1}-1},v_{j_{s+2}-1},\ldots,v_{j_{p-1}-1}$, therefore $v_{j_p-1}$ is not contained in $A_p$. Next we will show that the defining vertices in $A_p$ are distinct for $s+1\le p\le \ell$.
\begin{claim}
There are no indices $1\le p\le s$ and $s+1\le q\le \ell$ such that $v_{i_p-1}=v_{j_q-1}$, where $v_{i_p-1}\in A_{p+1}$ and $v_{j_q-1}\in A_{q+1}$.
\end{claim}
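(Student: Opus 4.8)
The plan is to argue by contradiction. Suppose there are indices $1\le p\le s$ and $s+1\le q\le \ell$ with $v_{i_p-1}\in A_{p+1}$, $v_{j_q-1}\in A_{q+1}$ and $v_{i_p-1}=v_{j_q-1}$. Since the defining vertices $v_1,\ldots,v_{t-1}$ are pairwise distinct, the equality $v_{i_p-1}=v_{j_q-1}$ forces $i_p=j_q$; write $m:=i_p=j_q$. From $v_{i_p}\in e_1\cap U$ and $v_{j_q-1}\in e_t\cap U$ I then read off the two ``chords'' $v_m\in e_1$ and $v_{m-1}\in e_t$. The goal is to combine these two chords with the longest path $P$ to exhibit a Berge cycle of length $t$, contradicting the standing assumption that $\mathcal{H}$ is $\mathcal{B}C_t$-free.

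To find the single non-defining vertex needed to close such a cycle, I would use that $v_{i_p-1}$ entered $A_{p+1}$ through the second (``otherwise'') branch of the recursion, which means exactly that $(e_{i_p}\setminus U)\cap A_p\neq\emptyset$; fix a vertex $u$ in this intersection. By the recursive construction, every non-defining vertex of $A_p$ lies in $(e_1\cup e_t\cup e_{i_1}\cup\cdots\cup e_{i_{p-1}})\setminus U$, while the disjointness $(e_{i_p}\setminus U)\cap(e_t\setminus U)=\emptyset$ established just after the definition of the $A_p$ rules out $u\in e_t$. Hence $u\in e_{i_r}\setminus U$ for some $0\le r\le p-1$, where I set $e_{i_0}:=e_1$; in particular $v_{i_r}\in e_1\cap U$ and $u\in(e_{i_r}\cap e_m)\setminus U$. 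Writing $a:=i_r<b:=m$, I now have all the ingredients at hand: the chord $v_a\in e_1$, the chord $v_{b-1}\in e_t$, and a non-defining vertex $u\in e_a\cap e_b$.

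I would then close up the path explicitly. Running forward along $P$ from $e_1$ to $e_a$, crossing to $e_b$ through $u$, running forward again from $e_b$ up to $e_t$, jumping back via the chord $v_{b-1}\in e_t$, descending through $e_{b-1},\ldots,e_{a+1}$, and finally returning to $e_1$ along the chord $v_a\in e_1$, produces the closed walk
\begin{gather*}
e_1,v_1,e_2,\ldots,v_{a-1},e_a,u,e_b,v_b,e_{b+1},\ldots,v_{t-1},e_t,\\
v_{b-1},e_{b-1},v_{b-2},\ldots,v_{a+1},e_{a+1},v_a,e_1.
\end{gather*}
A short bookkeeping check shows that the edges used are precisely $\{1,\ldots,a\}\cup\{b,\ldots,t\}\cup\{a+1,\ldots,b-1\}=\{1,\ldots,t\}$, each once, and that the visited vertices are exactly $v_1,\ldots,v_{t-1}$ in a permuted order together with the single vertex $u\notin U$. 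This is therefore a Berge cycle of length $t$, the desired contradiction, so no such pair of indices $p,q$ can exist.

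The main obstacle I anticipate lies entirely in this last verification step rather than in any new idea: one must confirm that the forward block, the reversed block, and the two chord-closings neither reuse an edge nor repeat a vertex, and in particular handle two degenerate boundaries. When $a=m-1$ the descending block $e_{b-1},\ldots,e_{a+1}$ is empty and the single vertex $v_a=v_{m-1}$ must serve simultaneously as a vertex of $e_1$ and of $e_t$, closing the cycle directly; when $r=0$ one has $u\in e_1$ and the initial block $e_1,v_1,\ldots,e_a$ collapses. In both degenerate cases the same cyclic sequence still visits $v_1,\ldots,v_{t-1},u$ exactly once and uses all of $e_1,\ldots,e_t$, so the contradiction persists. (Symmetrically, had $u$ been unavailable one could instead have invoked the right-hand witness guaranteed by $(e_{j_q}\setminus U)\cap A_q\neq\emptyset$ together with the chord $v_m\in e_1$, but the argument above already suffices.)
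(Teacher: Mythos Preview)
Your proposal is correct and follows essentially the same approach as the paper: both argue that the ``otherwise'' branch yields a witness $u\in(e_{i_p}\setminus U)\cap(e_{i_{r}}\setminus U)$ for some earlier $r$, and then close a Berge cycle of length $t$ using the chords $v_{i_r}\in e_1$ and $v_{i_p-1}\in e_t$; your displayed cycle is precisely the paper's cycle $P'$ written with a different starting point. Your treatment is slightly more careful in that you explicitly allow $r=0$ (so $u\in e_1\setminus U$) and discuss the degenerate boundary $a=b-1$, whereas the paper writes the range as $1\le p'\le p-1$ without singling out the $e_1$ case.
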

\begin{proof}
Assume that there exist $1\le p\le s$ and $s+1\le q\le \ell$ such that $v_{i_p-1}=v_{j_q-1}$, where $v_{i_p-1}\in A_{p+1}$ and $v_{j_q-1}\in A_{q+1}$. 

Since $v_{i_p-1}\in A_{p+1}$, there must exist an index $1\le p'\le p-1$ such that $(e_{i_{p'}}\setminus U)\cap (e_{i_p}\setminus U)\neq \emptyset$. Note that $v_{i_{p'}},v_{i_p}\in e_1\cap U$ and $v_{i_p-1}\in e_t\cap U$. 
If we suppose that $w\in (e_{i_{p'}}\setminus U)\cap (e_{i_{p}}\setminus U)$, then
\begin{equation*}
\begin{aligned}
P'=&v_{i_p},e_{i_p+1},v_{i_p+1},\ldots,v_{t-1},e_t,v_{i_p-1},e_{i_p-1},v_{i_p-2},\ldots,\\
&e_{i_{p'}+1},v_{i_{p'}},e_1,v_1,e_2,v_2,\ldots,v_{i_{p'}-1},e_{i_{p'}},w,e_{i_p},v_{i_p}
\end{aligned}
\end{equation*}
is a Berge cycle of length $t$, a contradiction.
\end{proof}

Let us denote $A=A_{\ell+1}$. Then, we have 
$|A_p|<|A_{p+1}|$ for all $1\le p\le \ell$. Hence, $|A|\ge |A_1|+\ell\ge 2r-1$, by Lemmas \ref{lemma21}, \ref{lemma22} and \ref{lemma23} we have that $|N_{\mathcal{H}}(A)|\le k-1$. 

This completes the proof.

\section{Concluding Remarks}\label{seccr}
In this paper, we determined $\ex^{\mathrm{conn}}_r(n,\mathcal{B}P_k)$ when $n$ is sufficiently large and $n$ is not a multiple of $r$. For the case $k=r+1$, we determined $\ex^{\mathrm{conn}}_r(n,\mathcal{B}P_k)$ asymptotically. We conjecture that the behavior of the function $\ex^{\mathrm{conn}}_r(n,\mathcal{B}P_k)$ for the case $r+2\le k\le 2r-1$ will be very similar to that for $k=r+1$. 
\begin{conjecture}
Fix integers $k$ and $r$ such that $r+1\le k\le 2r-1$. Then for sufficiently large $n$,
$$
\ex^{\mathrm{conn}}_r(n,\mathcal{B}P_k)=n-(k-2)+\binom{k-2}{r}.
$$
\end{conjecture}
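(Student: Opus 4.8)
The plan is to prove matching lower and upper bounds, the case $k=r+1$ being Theorem~\ref{thm2}. For the lower bound I would fix a \emph{core} $K$ of $k-2$ vertices and take all $\binom{k-2}{r}$ hyperedges spanned by $K$; this accounts for the binomial term and is nonempty exactly when $k\ge r+2$. Since $k\ge r+1$ we may also fix an $(r-1)$-subset $K_{0}\subseteq K$, and for each of the $n-(k-2)$ vertices $w$ outside $K$ we add the pendant hyperedge $K_{0}\cup\{w\}$. This gives precisely $\binom{k-2}{r}+\bigl(n-(k-2)\bigr)$ hyperedges and a connected hypergraph in which every outside vertex has degree one. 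The key observation is that a degree-one vertex can appear in a Berge path only as a terminal defining vertex (an interior defining vertex must lie in two consecutive hyperedges). Hence any Berge path uses at most two outside vertices together with at most the $k-2$ core vertices as its defining vertices, so it has at most $k$ defining vertices and thus length at most $k-1$. Therefore the construction is $\mathcal{B}P_{k}$-free and realises the claimed value.

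For the upper bound let $\mathcal{H}$ be an $n$-vertex connected $\mathcal{B}P_{k}$-free $r$-uniform hypergraph and let $t\le k-1$ be the length of a longest Berge path. I would argue by induction on $n$, splitting according to whether $\mathcal{H}$ contains a Berge cycle of length $t$. When $\mathcal{H}$ is $\mathcal{B}C_{t}$-free, Lemma~\ref{lemma20} (with multiplicity $m=1$) supplies a set $S$ with either $|S|=2r-2$ and $|N_{\mathcal{H}}(S)|\le 2$, or $|S|\ge 2r-1$ and $|N_{\mathcal{H}}(S)|\le t\le k-1$. Writing $f(n)=n-(k-2)+\binom{k-2}{r}$ and using $e(\mathcal{H})\le |N_{\mathcal{H}}(S)|+e\bigl(\mathcal{H}[V\setminus S]\bigr)$ together with the slope-one identity $f(n)-f(n-|S|)=|S|$, a direct computation gives a surplus of $2r-4\ge 0$ in the first alternative and of $|S|-(k-1)\ge (2r-1)-(k-1)\ge 0$ in the second (using $k\le 2r-1$). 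Thus the vertex--edge trade-off always has slack, which is exactly what a target of slope one permits.

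The delicate point in this step is that $\mathcal{H}[V\setminus S]$ need not be connected, and disconnected $\mathcal{B}P_{k}$-free hypergraphs can violate the bound $f$: a disjoint union of complete $r$-graphs on $k-2$ vertices has $\tfrac{n}{k-2}\binom{k-2}{r}$ hyperedges, which exceeds $f(n)$ once $\binom{k-2}{r}>k-2$. One therefore cannot apply the inductive bound componentwise and simply add the constants $\binom{k-2}{r}$ once per component. Instead I would use the connectivity of $\mathcal{H}$, channelled through $N_{\mathcal{H}}(S)$, to show that at most one component of $\mathcal{H}[V\setminus S]$ can carry a dense core: if two components each contained many hyperedges on a common set of $k-2$ vertices, then stitching their longest internal Berge paths together through the deleted set $S$ would yield a Berge path of length at least $k$, a contradiction. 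Making this confinement precise, and thereby charging the additive term $\binom{k-2}{r}$ exactly once, is the first genuine obstacle.

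The second, and harder, obstacle is the case in which $\mathcal{H}$ does contain a Berge cycle of length $t$. Here I would analyse the structure around a longest such cycle as in the proofs of Theorems~\ref{thm1} and~\ref{thm2}: no hyperedge avoids the cycle, no length-two Berge path escapes it, consecutive cycle vertices cannot both carry external hyperedges, and so on, yielding a bound of the shape $t+\tfrac{n-t}{\,r-\lfloor t/2\rfloor\,}$. When $r-\lfloor t/2\rfloor\ge 2$ this is, for large $n$, comfortably below $f(n)$; but in the extreme subcase $t=k-1=2r-2$ (that is, $k=2r-1$) one has $r-\lfloor t/2\rfloor=1$ and the estimate only gives $\approx n$, which can exceed $f(n)$. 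In this regime a hyperedge may meet the core in as many as $r-1$ vertices, and the crude density ratio loses the additive constant entirely. Recovering the exact value requires a stability-type refinement showing that all but boundedly many hyperedges must be confined to a single set of $k-2$ vertices (producing the $\binom{k-2}{r}$ term) while every remaining hyperedge introduces a fresh vertex (producing the $n-(k-2)$ term). Ruling out the intermediate configurations that spread the dense hyperedges over more than $k-2$ vertices without creating a $\mathcal{B}P_{k}$ is precisely the step that Lemma~\ref{lemma20} and the direct cycle analysis do not by themselves deliver, and is where the principal difficulty of the conjecture lies.
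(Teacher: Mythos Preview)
The statement you are addressing is a \emph{conjecture}, not a theorem: the paper does not prove it. The only thing the paper supplies is the lower-bound construction in the concluding remarks, and your construction is essentially identical to theirs (you use a fixed $(r-1)$-subset $K_0$ for every pendant edge, while the paper allows an arbitrary $(r-1)$-subset of the core for each outside vertex, but the edge count and the $\mathcal{B}P_k$-freeness argument are the same). Your justification that a degree-one vertex can only be a terminal defining vertex is a clean way to verify $\mathcal{B}P_k$-freeness, and it works.

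For the upper bound, however, your plan relies on Lemma~\ref{lemma20} in the $\mathcal{B}C_t$-free case, and this is illegitimate: that lemma is stated and proved under the hypothesis $r\ge k\ge 3$, whereas the entire range of the conjecture is $r+1\le k\le 2r-1$, so $k>r$. The proof of Lemma~\ref{lemma20} genuinely uses $r\ge k$ (for instance, in the opening claim that a longest Berge path has length $t\le k-1$, the argument exploits $|e_1\setminus U|\ge r-(k-1)>0$), so you cannot simply invoke it here. Thus even the part of your outline that you treat as routine---the inductive step when no $\mathcal{B}C_t$ is present---has no foundation in the paper's toolkit. You correctly identify the $t=k-1=2r-2$ cycle subcase as the hardest obstruction, but in fact the whole upper bound, not just that subcase, is open; this is precisely why the authors state the result as a conjecture rather than a theorem.
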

Indeed, we can construct an extremal $r$-uniform hypergraph $\mathcal{H}$ on $n$ vertices as follows. First, let $S\subseteq V(\mathcal{H})$ be a vertex subset
of $\mathcal{H}$ containing $k-2$ vertices such that $S$ forms a complete $r$-uniform subhypergraph in $\mathcal{H}$. For each vertex $v\in V(\mathcal{H})\setminus S$, choose an $(r-1)$-subset in $S$ such that $S\cup \{v\}$ forms a hyperedge in $\mathcal{H}$. It can be easily checked that $\mathcal{H}$ is $\mathcal{B}P_k$-free and connected.

\section*{Ackowledgements}
The research of Wang and Zhang was supported by the National Natural Science Foundation of
China (No. 12271439) and China Scholarship Council (No. 202206290003).
The research of Gy\H{o}ri was supported by NKFIH grant K132696.
The research of Tompkins was supported by NKFIH grant K135800.


\begin{thebibliography}{99}
\bibitem{BGLS} P.N. Balister, E. Gy\H{o}ri, J. Lehel, R.H. Schelp, Connected graphs without long paths, Discrete Math. 308 (19) (2008)
4487--4494.

\bibitem{DGMT} A. Davoodi, E. Győri, A. Methuku, C. Tompkins, An Erd\H{o}s-Gallai type theorem for uniform hypergraphs,
European J. Combin. 69 (2018) 159--162.

\bibitem{EG} P. Erd\H{o}s, T. Gallai, On the maximal paths and cricuits of graphs, Acta Math. Acad. Sci. Hung. 10 (1959) 337--357.

\bibitem{ES-1} P. Erd\H{o}s, M. Simonovits, A limit theorem in graph theory
Studia Sci. Math. Hungar. 1 (1966) 51--57.

\bibitem{ES-2} P. Erd\H{o}s, A.H. Stone, On the structure of linear graphs,
Bull. Amer. Math. Soc. 52 (1946) 1089--1091

\bibitem{EGMSTZ} B. Ergemlidze, E. Gy\H{o}ri, A. Methuku, N. Salia, C. Tompkins, O. Zamora
Avoiding long Berge cycles, the missing cases $k=r+1$ and $k=r+2$,
Comb. Probab. Comput. (2020) 1--13. 

\bibitem{FS} R.J. Faudree, R.H. Schelp, Path Ramsey numbers in multicolorings, J. Combin. Theory Ser. B 19 (1975) 150--160.

\bibitem{FKL19} Z. F\"{u}redi, A. Kostochka, R. Luo, Avoiding long Berge-cycles, J. Combin. Theory Ser. B 137 (2019) 55--64.

\bibitem{FKL} Z. F\"{u}redi, A. Kostochka, R. Luo, On 2-connected hypergraphs with no long cycles, Electron. J. Combin. 26 (4) (2019) P4.31.

\bibitem{FKL21} Z. F\"{u}redi, A. Kostochka, R. Luo, Avoiding long Berge cycles II, exact bounds for all $n$, Journal of Combinatorics 12 (2021) 247--268.

\bibitem{FKV} Z. F\"{u}redi, A. Kostochka, J. Verstra\"{e}te, Stability Erd\H{o}s–Gallai theorems on cycles and paths, J. Combin. Theory Ser. B 121 (2016) 197--228.

\bibitem{FS1} Z. F\"{u}redi, M. Simonovits, The history of degenerate (bipartite) extremal graph problems, in: Erd\H{o}s Centennial,
Springer, Berlin, Heidelberg, 2013, pp. 169--264.


\bibitem{GNPSV} D. Gerbner, D.T. Nagy, B. Patk\'{o}s, N. Salia, M. Vizer,
Stability of extremal connected hypergraphs avoiding Berge-paths,
European J. Combin. 118 (2024) 103930.

\bibitem{GKL} E. Gy\H{o}ri, G. Katona, N. Lemons, Hypergraph extensions of the Erd\H{o}s-Gallai theorem, European J. Combin. 58 (2016) 238--246.

\bibitem{GLSZ} E. Gy\H{o}ri, N. Lemons, N. Salia, O. Zamora, The structure of hypergraphs without long Berge cycles, J. Combin. Theory Ser. B 148 (2021) 239--250.

\bibitem{GMSTV} E. Gy\H{o}ri, A. Methuku, N. Salia, C. Tompkins, M. Vizer, On the maximum size of connected hypergraphs without a
path of given length, Discrete Math. 341 (9) (2018) 2602--2605.

\bibitem{GSZ21} E. Gy\H{o}ri, N. Salia, O. Zamora, Connected hypergraphs without long Berge-paths, European J. Combin. 96 (2021) 103353.

\bibitem{Kopylov} G.N. Kopylov, On maximal paths and cycles in a graph, Sov. Math. (1977) 593--596.

\bibitem{KL} A. Kostochka, R. Luo, On $r$-uniform hypergraphs with circumference less than $r$, Discrete Appl. Math. 276 (2020) 69--91.
\end{thebibliography}
\end{document}